\date{
February 9, 2020}
\let\oldsection\section
\renewcommand\section{\setcounter{equation}{0}\oldsection}
\newtheorem{corollary}{Corollary}[section]
\newtheorem{theorem}{Theorem}[section]
\newtheorem{lemma}{Lemma}[section]
\newtheorem{proposition}{Proposition}[section]
\newtheorem{definition}{Definition}[section]
\newtheorem{remark}{Remark}[section]
\begin{document}

\title[Entropy-bounded solutions compressible Navier--Stokes]{Entropy-bounded solutions to the one-dimensional heat
conductive compressible Navier--Stokes equations with far field vacuum}

\author{Jinkai~Li}
\address[Jinkai~Li]{South China Research Center for Applied Mathematics and Interdisciplinary Studies, South China Normal University, Guangzhou 510631, China}
\email{jklimath@m.scnu.edu.cn; jklimath@gmail.com}

\author{Zhouping~Xin}
\address[Zhouping~Xin]{The Institute of Mathematical Sciences,
The Chinese University of Hong Kong, Hong Kong, China}
\email{zpxin@ims.cuhk.edu.hk}

\keywords{heat conductive compressible Navier--Stokes equations; global existence
and uniqueness; uniformly bounded entropy; far field vacuum; De Giorgi iteration; singular estimates.}
\subjclass[2010]{35Q30, 76N10.}


\begin{abstract}
In the presence of vacuum, the physical entropy for polytropic gases behaves singularly and it is thus a challenge to study
its dynamics. It is shown in this paper that the boundedness of the entropy can be propagated up to any finite time
provided that the initial vacuum presents only at far fields with sufficiently slow decay of the initial density. More precisely, for the Cauchy problem of the onedimensional heat conductive compressible Navier--Stokes
equations, the global well-posedness of strong solutions and uniform boundedness of
the corresponding entropy are established, as long as the initial density vanishes only at far fields with a rate no more than $O(\frac{1}{x^2})$.
The main tools of proving the uniform boundedness of the entropy are some singularly weighted energy estimates carefully designed for the heat conductive
compressible Navier--Stokes equations
and an elaborate De Giorgi type iteration technique for some classes of
degenerate parabolic equations. The De Giorgi type iterations are carried out to
different equations in establishing the lower and upper bounds of the entropy.
\end{abstract}

\maketitle


\section{Introduction}
\label{secIntro}
\subsection{The compressible Navier--Stokes equations}
The one dimensional heat conductive compressible Navier--Stokes equations for the
polytropic gases are:
\begin{eqnarray}
  &&\rho_t+(\rho u)_x=0,\label{erho}\\
  &&\rho(u_t+uu_x)-\mu u_{xx}+p_x=0,\label{eu}\\
  &&\rho(e_t+ue_x)+pu_x-\kappa\theta_{xx}=\mu |u_x|^2, \label{ee}
\end{eqnarray}
where the density $\rho\geq0$, the velocity $u\in\mathbb R$, and the absolute temperature
$\theta\geq0$ are the unknowns, and the specific internal energy
$e$ and the pressure $p$ are expressed as
$$
e=c_v\theta, \quad p=R\rho\theta,
$$
with $R$ and $c_v$ being positive constants, $\mu$ and $\kappa$ are the viscous
and heat conductive coefficients, respectively, which are assumed to be positive constants.

In terms of $\vartheta$, the energy equation becomes
\begin{equation}
  c_v\rho(\theta_t+u\theta_x)+pu_x-\kappa\theta_{xx}=\mu |u_x|^2. \label{etheta}
\end{equation}
The entropy $s$ is defined by the Gibb's equation $\theta Ds=De+pD(\frac1\rho)$. The following equations of state hold:
\begin{equation}
  p=Ae^{\frac{s}{c_v}}\rho^\gamma,\quad s=c_v\left(\log\frac{R}{A}+\log\theta-(\gamma-1)\log\rho\right), \label{entropy}
\end{equation}
where $\gamma-1=\frac{R}{c_v}$ and $\gamma>1$. The entropy $s$ satisfies
\begin{equation}
  \label{es}
  \rho(s_t+us_x)-\kappa\left(\frac{\theta_x}{\theta}\right)_x
  =\frac1\theta\left(\mu |u_x|^2+\kappa\frac{|\theta_x|^2}{\theta}\right),
\end{equation}
at the place where both $\rho$ and $\theta$ are positive.

The compressible Navier--Stokes equations have been studied extensively.
In the absence of vacuum, that is,
the density is uniformly positive, local
well-posedness of classic or strong solutions
was first proved by Nash in \cite{NASH62} long time ago, and later by many mathematicians, see, e.g.,
\cite{ITAYA71,VOLHUD72,TANI77,VALLI82,LUKAS84}. However, the global existence
of classic or strong solutions with arbitrary
large initial data is not known generally. Only the
one-dimensional theory is quite satisfactory:
global well-posedness of strong solutions was proved by Kazhikhov--Shelukhin \cite{KAZHIKOV77} and Kazhikhov
\cite{KAZHIKOV82}; global well-posedness in the framework of weak
solutions can be also proved, see, e.g., \cite{ZLOAMO97,ZLOAMO98,CHEHOFTRI00,JIAZLO04}; large time behavior
of solutions with large initial data was recently proved in \cite{LILIANG16}.
Compared with the one-dimensional case, the multidimensional case is much more
complicated, and up to now, essentially only for the cases that the initial data
is around some non-vacuum equilibrium, the global well-posedness is well understood. The results along this
direction were first obtained by Matsumura--Nishida
\cite{MATNIS80,MATNIS83}, and later developed by many mathematicians, see, e.g.,   \cite{PONCE85,VALZAJ86,DECK92,HOFF97,KOBSHI99,DANCHI01,CMZ,CHIDAN15,DANXU17,FZZ18}.

One major difference between the one-dimensional and multidimensional cases
for the compressible Navier--Stokes equations is
the possible formation of vacuum.
As shown by Hoff-Smoller \cite{HS}, for the 1D compressible Navier--Stokes equations,
if there is no vacuum initially, then no vacuum will be formed later in finite time,
while such a result is still open for the multidimensional case. The possible
formation of vacuum is one of the main challenges.

In the presence of vacuum, the study of the compressible Navier--Stokes equations becomes
much more difficult than the non-vacuum case due to the degeneracy of the system.
Global existence of weak solutions to the isentropic fluids with possible vacuum
was first initiated by Lions \cite{LIONS98}, and
later improved by Feireisl--Novotn\'y--Petzeltov\'a \cite{FEIREISL01}
and further by Jiang--Zhang \cite{JIAZHA03}.
For the full case, global existence of variational weak solutions was proved
by Feireisl \cite{FEIREISL04B} for special equations of state. Local well-posedness of strong solutions was proved in
\cite{SALSTR93,CHOKIM04,CHOKIM06-1,CHOKIM06-2}. Global existence of strong solutions, of small energy but
allowing large oscillations and vacuum, was first proved by Huang--Li--Xin \cite{HLX12} for the isentropic case, and
generalized later by the authors in \cite{HUANGLI11,WENZHU17,LIGLOBALSMALL} for the full case.

There are some substantial differences in the mathematical theories for the compressible Navier--Stokes equations between the vacuum
and non-vacuum cases. First, in the absence of vacuum, the well-posedness holds in both the homogeneous and
inhomogeneous spaces, but it is not necessarily true if the vacuum appears. In fact, if the density is compactly supported,
then the well-posedness holds in the homogeneous spaces, see, e.g., \cite{CHOKIM04,CHOKIM06-1,CHOKIM06-2,HLX12,HUANGLI11,WENZHU17}, but not in the inhomogeneous spaces, see Li--Wang--Xin \cite{LWX}, while if the density tends to
zero sufficiently slowly at the far field, then the well-posedness holds in both the homogeneous and inhomogeneous spaces, see the recent work
by the authors \cite{LIXIN17}. Second, the solution spaces guarantee the uniform boundedness of the entropy for the non-vacuum case, but may fail
for the vacuum case. In fact, it follows from the blowup results of Xin \cite{XIN98} and Xin--Yan \cite{XINYAN13} that the corresponding entropy in \cite{HUANGLI11,WENZHU17} must be
unbounded, if initially
there is an isolated mass group surrounded by the vacuum region.


Due to the lack of the expression of the entropy in the vacuum region and the high
singularity and degeneracy of the entropy equation close to the vacuum region, in spite of its importance, the mathematical analysis of the entropy for the viscous compressible fluids in the presence of vacuum was rarely carried out before.
In this paper, we continue our studies, initiated in \cite{LIXIN17},
on the uniform boundedness of the entropy for the full compressible Navier--Stokes equations
in the presence of vacuum.
Different from the non heat conductive case in \cite{LIXIN17},
for the heat conductive case, one may only need to deal with the
the far field vacuum, as the heat conductivity will make the temperature strictly
positive everywhere after the initial time, which implies that the entropy becomes unbounded instantaneously
if the interior vacuum occurs initially. However, positive heat conductivity leads to both increase and decrease of the
entropy and thus creates substantial difficulties in the analysis compared with \cite{LIXIN17}.


The results of this paper are stated and proved in the Lagrangian coordinates, see Section \ref{ssecmainresult};
however, since the solutions being established are Lipschitz continuous, all results can be transformed accordingly in the Euler coordinates.

\subsection{Main results and key ideas of the analysis}
\label{ssecmainresult}
Let $y$ be the Lagrangian coordinate and define the coordinate transform
between $y$ and the Euler coordinate $x$ as
$x=\eta(y,t)$ with $\eta(y,t)$ satisfying
\begin{equation*}\label{flowmap}
  \left\{
  \begin{array}{l}
  \partial_t\eta(y,t)=u(\eta(y,t),t),\\
  \eta(y,0)=y.
  \end{array}
  \right.
\end{equation*}
Denote
\begin{equation*}
  \varrho(y,t):=\rho(\eta(y,t),t),\quad v(y,t):=u(\eta(y,t),t), \quad \vartheta(y,t):=\theta(\eta(y,t),t), \label{newunknown}
\end{equation*}
and
\begin{equation*}
 J:= J(y,t)=\eta_y(y,t).
\end{equation*}
Then,
\begin{equation*}
J_t=v_y,\quad J|_{t=0}\equiv1, \quad J\varrho=\varrho_0.\label{LCNSJ}
\end{equation*}
Thus, in the Lagrangian coordinates, the system (\ref{erho}), (\ref{eu}), and (\ref{etheta}) becomes
\begin{eqnarray}
  J_t=v_y,\label{EqJ}\\
  \varrho_0v_t-\mu\left(\frac{v_y}{J}\right)_y+R\left(\frac{\varrho_0}{J}
  \theta\right)_y=0,\label{EqV}\\
  c_v\varrho_0\vartheta_t-\kappa\left(\frac{\vartheta_y}{J}\right)_y
  +R\frac{\varrho_0}{J}\vartheta v_y=\mu\frac{|v_y|^2}{J}.\label{EqTheta}
\end{eqnarray}
The initial date will be taken as
\begin{equation}
  \label{IC}
  (J,v,\vartheta)|_{t=0}=(J_0,v_0,\vartheta_0),
\end{equation}
where $J_0$ has uniform positive lower and upper bounds.

It should be emphasized that here $J$ is deliberately chosen to replace $\varrho$ as one of the unknowns of the system,
which is one of the main technical differences between the current paper and the classic works
\cite{KAZHIKOV82,KAZHIKOV77}. Note that, by the definition of $J$, the initial $J_0$ should
be identically one; however, for the aim of extending a local
solution $(J,v,\vartheta)$ to be a global one, one needs the local well-posedness
of solutions to the system (\ref{EqJ})--(\ref{EqTheta}) with initial $J_0$
not being identically one. 

In the
Lagrangian coordinates, the entropy can be expressed as
\begin{eqnarray}
 s &=&c_v\left(\log\frac RA+(\gamma-1)\log J+\log\vartheta-(\gamma-1)\log\varrho_0\right).\label{Entropy}
\end{eqnarray}
The effective viscous flux $G$, defined as
\begin{equation}
\label{ExG}
G:=\mu\frac{v_y}{J}-R\frac{\varrho_0\vartheta}{J},
\end{equation}
is useful for proving the global existence of solutions, which satisfies
\begin{equation}
  G_t-\frac\mu J\left(\frac{G_y}{\varrho_0}\right)_y=-\frac{\kappa(\gamma-1)}{J}
  \left(\frac{\vartheta_y}{J}\right)_y-\gamma\frac{v_y}{J}G. \label{EqG}
\end{equation}

The following conventions will be used throughout this paper.
For $1\leq q\leq\infty$ and positive integer $m$, $L^q=L^q(\mathbb R)$ and
$W^{1,q}=W^{m,q}(\mathbb R)$ denote the standard Lebesgue and Sobolev spaces,
respectively, and $H^m=W^{m,2}$. For simplicity, $L^q$ and
$H^m$ denote also their $N$ product spaces $(L^q)^N$ and $(H^m)^N$, respectively.
$\|u\|_q$ is the $L^q$ norm of $u$, and $\|(f_1,f_2,\cdots,f_n)\|_X$ is the sum
$\sum_{i=1}^N\|f_i\|_X$ or the equivalent norm $\left(\sum_{i=1}^N\|f_i\|_X^2
\right)^{\frac12}$.

The definition of the solutions being considered in this paper is given as follows:

\begin{definition}\label{Def}
Given a positive time $\mathcal T$ and assume that
\begin{equation}\tag{H0}
  \left\{
    \begin{aligned}
      &0<\varrho_0\in W^{1,\infty}(\mathbb R),\quad\underline J\leq J_0\in L^\infty(\mathbb R),\quad
      \vartheta_0\geq0,\\
      &\sqrt{\varrho_0}v_0, \sqrt{\varrho_0}v_0^2, \sqrt{\varrho_0}\vartheta_0, \sqrt{\varrho_0}J_0',v_0', \varrho_0^{\frac32}
      \vartheta_0'\in L^2(\mathbb R),
    \end{aligned}
  \right.
\end{equation}
where $\underline J$ is a positive constant. A triple $(J, v, \vartheta)$ is called a solution to the
system (\ref{EqJ})--(\ref{EqTheta}), subject to (\ref{IC}), in $\mathbb R\times(0,T)$, if it has the regularities
\begin{eqnarray*}
&0<J\in L^\infty(\mathbb R\times(0,\mathcal T)),\quad 0\leq\vartheta\in L^\infty(\mathbb R\times(0,\mathcal T)), \\
&J_t,\sqrt{\varrho_0}J_y, \sqrt{\varrho_0}v,\sqrt{\varrho_0}v^2, v_y, \sqrt{\varrho_0}\vartheta, \varrho_0^{\frac32}\vartheta_y\in L^\infty(0,\mathcal T; L^2(\mathbb R)), \\
&\sqrt{\varrho_0}J_{yt}, vv_y,\sqrt{\varrho_0}v_t,\sqrt{\varrho_0}v_{yy}, \vartheta_y,\varrho_0(\frac{\vartheta_y}{J})_y,  \varrho_0^2\vartheta_t\in L^2(0,\mathcal T; L^2(\mathbb R)),
\end{eqnarray*}
satisfies (\ref{EqJ})--(\ref{EqTheta}) a.e.\,in $\mathbb R\times(0,\mathcal T)$, and fulfills the initial condition (\ref{IC}).
\end{definition}

\begin{remark}
It can be checked easily that $(J,v,\vartheta)$ in Definition \ref{Def} has the regularities
\begin{eqnarray*}
  J\in C([0,\mathcal T]; H^1((-R,R))),\quad J_t\in L^2(0,\mathcal T; H^1((-R,R))),\\
  v,\vartheta\in C([0,\mathcal T]; H^1((-R,R)))\cap L^2(0,\mathcal T; H^2((-R,R))),\\
  v_t,\vartheta_t\in L^2(0,\mathcal T; L^2((-R,R))),
\end{eqnarray*}
for any $R>0$ and, in particular, $(J, v, \vartheta)|_{t=0}$ is well-defined.
\end{remark}

The main results of this paper are summarized in the following theorems, whose precise statements will be given in the subsequent sections,
and the major ideas of the proofs are sketched here.

First, the following well-posedness results hold.

\begin{theorem}
\label{ThmGloUni-v'}
(i) Assume that (H0) holds.
Then there is a local solution $(J, v, \vartheta)$ to the system (\ref{EqJ})--(\ref{EqTheta}) with initial data (\ref{IC}).

(ii) Under the additional assumption that
\begin{equation}
  \left(\frac{1}{\sqrt{\varrho_0}}\right)'\in L^\infty(\mathbb R), \quad \varrho_0\in L^1(\mathbb R),
  \quad\sqrt{\varrho_0}\vartheta_0'\in L^2(\mathbb R)
  \tag{H1}
\end{equation}
the solution $(J, v, \vartheta)$ established in (i) is unique and exists globally in time.
\end{theorem}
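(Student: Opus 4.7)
\medskip

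\textbf{Plan for part (i): local existence.} The system (\ref{EqJ})--(\ref{EqTheta}) is degenerate wherever $\varrho_0$ vanishes (here, at far fields), so direct fixed-point iteration on $L^2(\mathbb R)$ fails. My plan is to introduce a two-parameter approximation: on the bounded interval $(-R,R)$, replace $\varrho_0$ by $\varrho_0^{\varepsilon}:=\varrho_0+\varepsilon$, supplement the system with boundary conditions $v|_{y=\pm R}=0$ and $\vartheta_y|_{y=\pm R}=0$, and regularize the initial data $(J_0,v_0,\vartheta_0)$ by mollification. Since $\varrho_0^{\varepsilon}\ge\varepsilon>0$, the resulting problem is a uniformly parabolic/hyperbolic coupled system for which local well-posedness in $H^m$ classes follows from standard Kazhikhov--Shelukhin type arguments (linearize, contract in a high-norm ball, use Banach fixed point). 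This produces a family $(J^{\varepsilon,R},v^{\varepsilon,R},\vartheta^{\varepsilon,R})$ on some time interval $[0,T_{\varepsilon,R}]$.

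The heart of part (i) is to show $T_{\varepsilon,R}\ge T_*$ for some $T_*>0$ independent of $\varepsilon,R$ and to derive $(\varepsilon,R)$-uniform estimates exactly matching the regularities listed in Definition \ref{Def}. I would obtain these by testing (\ref{EqV}) against $v$, $v_t$ and $-(v_y/J)_y$, testing (\ref{EqTheta}) against $\vartheta$ and a carefully weighted multiplier involving powers of $\varrho_0$, and differentiating (\ref{EqJ}) in $y$ to control $\sqrt{\varrho_0}J_y$ and $\sqrt{\varrho_0}J_{yt}$. The weights $\sqrt{\varrho_0}$ and $\varrho_0^{3/2}$ appearing in (H0) are tailored so that the natural energy integrals close even as $\varrho_0\to 0$ at infinity; in particular the quantity $\varrho_0^{3/2}\vartheta_y$ arises from the structure of (\ref{EqTheta}) when one tests against $\varrho_0^2\vartheta_t$ after using (\ref{EqJ}) to express $J_t$. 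With these uniform bounds in hand, Aubin--Lions compactness and a standard diagonal argument let me pass $R\to\infty$ and $\varepsilon\to 0$ to obtain a solution $(J,v,\vartheta)$ on $\mathbb R\times(0,T_*)$ satisfying Definition \ref{Def}. Strict positivity of $J$ follows from (\ref{EqJ}) and a Gronwall-in-$y$ bound on $\int_0^t v_y/J$.

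\textbf{Plan for part (ii): global existence and uniqueness.} I would set up a continuation argument: assuming the local solution from (i) exists on $[0,T^*)$ maximally, I aim to show all norms in Definition \ref{Def} are bounded on $[0,T^*)$ for every finite $T^*$, so the local result can be reapplied. The extra hypotheses (H1) are used as follows. The condition $\varrho_0\in L^1(\mathbb R)$ gives conservation of total mass and total energy $\tfrac12\int\varrho_0 v^2+c_v\int\varrho_0\vartheta$, producing the basic energy identity. The condition $(1/\sqrt{\varrho_0})'\in L^\infty$ is the quantitative ``slow decay at infinity'' that allows me to convert weighted gradient norms into unweighted ones and, crucially, to run the representation-formula/Jensen argument for the effective viscous flux $G$ in (\ref{ExG})--(\ref{EqG}) so as to obtain uniform upper and lower bounds on $J$ (equivalently on $\rho$) on any compact time interval. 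The condition $\sqrt{\varrho_0}\vartheta_0'\in L^2$ upgrades the initial temperature regularity enough to control $\int_0^t\|\vartheta_y\|_2^2$ and thence $\|\vartheta\|_\infty$ via Sobolev embedding on bounded intervals followed by a weighted argument at infinity.

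The main obstacle, in my view, is the simultaneous control of pointwise bounds for $J$ and $\vartheta$ in the far-field vacuum regime: the usual Kazhikhov trick for the upper bound of $\rho$ (via $G$) now has to be run in singularly weighted spaces because $\varrho_0\to 0$, and one must avoid creating terms like $\vartheta/\varrho_0$ that blow up. I would handle this by using (\ref{EqG}) with the multiplier $G$ itself and exploiting the cancellation between $\kappa(\gamma-1)(\vartheta_y/J)_y/J$ and the corresponding term from (\ref{EqTheta}), combined with the a priori $L^\infty_t L^2_y$ bound on $v_y$ already available. Once $0<\underline J_*\le J\le \bar J$ uniformly on $[0,T^*]$ is established, the remaining higher-order estimates (controls on $\sqrt{\varrho_0}v_t$, $\varrho_0^2\vartheta_t$, etc.) close by repeated differentiation and standard parabolic bootstrapping, so the continuation succeeds and $T^*=\infty$. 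Finally, uniqueness follows by forming the system for the difference of two solutions $(\delta J,\delta v,\delta\vartheta)$, testing $\delta v$-equation against $\delta v$ and the $\delta\vartheta$-equation against $\delta\vartheta$ (with the $\varrho_0$ weight built in), using the already-established $L^\infty$ bounds on $J,v_y,\vartheta$ to dominate cross terms, and closing via Gronwall; the regularity in Definition \ref{Def} is exactly what is needed to justify each integration by parts.
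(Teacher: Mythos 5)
Your overall architecture (regularize, derive $(\varepsilon,R)$-uniform estimates, pass to the limit for (i); then conservation laws, pointwise $J$ bounds, bootstrapping, uniqueness by difference for (ii)) matches the paper's. However, there is a genuine gap at the key step of part (ii): your mechanism for obtaining pointwise bounds on $J$. You propose to run ``the Kazhikhov trick\ldots via $G$,'' exploiting a claimed cancellation between the $\vartheta$-terms of (\ref{EqG}) and (\ref{EqTheta}) together with an $L^\infty_t L^2_y$ bound on $v_y$. This does not close, for two reasons. First, $v_y\in L^\infty_t L^2_y$ does not give pointwise control of $J=J_0+\int_0^t v_y\,d\tau$; one would need $v_y\in L^1_tL^\infty_y$. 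Second, the energy estimate for $G$ from (\ref{EqG}) already presupposes the uniform positive lower bound on $J$ (to control the coefficients $1/J$, $1/J^2$, and the lower-order terms), so your scheme is circular: estimating $G$ needs $J$, and your $J$ bound needs $G$.

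What breaks this circularity in the paper is Proposition \ref{PropIdJ}: integrating (\ref{EqV}) in time (using $(v_y/J)_y=(\log J)_{ty}$) and then in $y$ from $-\infty$ yields the explicit formula $J(y,t)=B(y,t)\big(J_0(y)+\tfrac{R}{\mu}\int_0^t\tfrac{\varrho_0\vartheta}{B}\,d\tau\big)$ with $B=\exp\{\tfrac1\mu\int_{-\infty}^y\varrho_0(v-v_0)\,dy'\}$. Mass and energy conservation (this is precisely where $\varrho_0\in L^1$ enters) give uniform two-sided bounds on $B$; since $\vartheta\ge0$, the lower bound on $J$ is then immediate and the upper bound reads $\|J\|_\infty(t)\le C\big(1+\int_0^t\|\varrho_0\vartheta\|_\infty\,d\tau\big)$, which is then closed against the $L^2$-type estimate for $E=\tfrac{v^2}{2}+c_v\vartheta$. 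This formula is the far-field-vacuum replacement for the classical Kazhikhov bound $m_\varrho(t)\ge C[1+\int_0^t M_\vartheta\,d\tau]^{-1}$, which the paper notes fails when $m_\varrho\equiv0$; you should use it in place of your $G$-based argument. Two smaller remarks: for part (i) the paper stays on $\mathbb R$ (with $\varrho_{0n}=\varrho_0+\delta_n$ and compactly cut-off $v_{0n},\vartheta_{0n}$) rather than truncating to $(-R,R)$ with boundary conditions, so you would need to add a far-field matching step; and the multipliers that close the local $L^2$ estimates are $E$ and $v^3$ (with $\|\varrho_0\vartheta\|_\infty$ controlled by $\|\sqrt{\varrho_0}E\|_2$ and $\|\vartheta_y\|_2$) rather than $v_t$ and $-(v_y/J)_y$.
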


The local existence part of Theorem \ref{ThmGloUni-v'} can be proven in the standard way.
For the global existence, one may try to follow the arguments for the
non-vacuum case in \cite{KAZHIKOV77}. Unfortunately, it does not work directly here. Indeed, one of the key
observations used in \cite{KAZHIKOV77} is the following inequality (see (3.11) there)
\begin{equation}
m_\varrho(t)\geq C\left[1+\int_0^tM_\vartheta(\tau)d\tau\right]^{-1},\label{KSIID}
\end{equation}
where $m_\varrho$ and $M_\vartheta$ are the lower bound of $\varrho$ and upper bound of $\vartheta$, respectively, which is employed
to obtain the $L^\infty(0,T; L^2)$ type a priori estimates (see (4.7) in \cite{KAZHIKOV77}) and consequently the high
order estimates. However, (\ref{KSIID}) fails in the presence of vacuum where $m_\varrho\equiv0$ and
$M_\vartheta$ is finite.

The key step of proving the global existence here is to get the a priori $L^\infty(0,T; L^2)$
estimate of $(\sqrt{\varrho_0} v^2,\sqrt{\varrho_0}\vartheta)$ and upper bound of
$J$. These are achieved by the $L^2$ type energy estimate for $E:=\frac{v^2}{2}+c_v\vartheta$ and
the observation that $J=B (J_0+\frac R\mu\int_0^t\frac{\varrho_0\vartheta}{B} d\tau)$ for some function $B$ having positive lower and upper bounds (see Proposition \ref{PropIdJ}, below), which, in particular, implies
\begin{equation*}
\|J\|_\infty\leq C\left(1+\int_0^t\|\varrho_0\vartheta\|_\infty d\tau\right).
\end{equation*}
It is noted that this inequality holds for both the vacuum and non-vacuum cases, and it reduces to (\ref{KSIID}) for the non-vacuum case.

Now, we turn to the major issue of this paper: the uniform boundedness of the entropy. For the lower bound, we need the following key assumption:
\begin{equation}
  \left(\frac{1}{\varrho_0}\right)''\in L^\infty(\mathbb R). \tag{H2}
\end{equation}

\begin{theorem}
  \label{ThmSlowBd'}
Under the assumptions (H0)--(H2), the entropy of the solution in Theorem \ref{ThmGloUni-v'} is uniformly bounded from below, up to any finite time, as long as it holds initially.
\end{theorem}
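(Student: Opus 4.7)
The plan is to reduce the lower bound on the entropy to a pointwise upper bound on $u := (\gamma-1)\log\varrho_0 - \log\vartheta$. Indeed, (\ref{Entropy}) gives $s/c_v = \log(R/A) + (\gamma-1)\log J - u$, and the global solution furnished by Theorem \ref{ThmGloUni-v'} will satisfy $0 < \underline J \le J \le \bar J$ on $[0,\mathcal T]$ (the upper bound being exactly the one noted in the introduction via $J = B(J_0 + \tfrac{R}{\mu}\int_0^t \varrho_0\vartheta/B\,d\tau)$, the lower bound following from an analogous identity). Since the hypothesis that $s_0$ is bounded below translates precisely to $u_0 \in L^\infty(\mathbb R)$, the task becomes to propagate this $L^\infty$ bound on $u$ up to time $\mathcal T$.

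Dividing (\ref{EqTheta}) by $\vartheta$, using $(\log\vartheta)_t = \vartheta_t/\vartheta$ and $(\log\vartheta)_y = (\gamma-1)(\log\varrho_0)' - u_y$, one obtains
\begin{equation*}
c_v\varrho_0 u_t - \kappa\left(\frac{u_y}{J}\right)_y = -\kappa(\gamma-1)\left(\frac{(\log\varrho_0)'}{J}\right)_y - \frac{\kappa\vartheta_y^2}{J\vartheta^2} - \frac{\mu v_y^2}{J\vartheta} + R\frac{\varrho_0 v_y}{J}.
\end{equation*}
The two middle terms on the right are nonpositive and hence favorable for an upper bound on $u$; the genuinely dangerous contributions are the singular divergence involving $(\log\varrho_0)'/J$ and the linear term $R\varrho_0 v_y/J$. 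The role of (H1) and (H2) is to make the relevant coefficients uniformly bounded: (H1) gives $(\log\varrho_0)' = -2\sqrt{\varrho_0}(1/\sqrt{\varrho_0})' \in L^\infty$, and combining (H1) with (H2) through $(\log\varrho_0)'' = ((\log\varrho_0)')^2 - \varrho_0(1/\varrho_0)''$ yields $(\log\varrho_0)'' \in L^\infty$ as well.

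I would then run a De Giorgi iteration on the level sets $\{u > K_n\}$ at the geometric heights $K_n = \bar K(2 - 2^{-n})$, where $\bar K \ge \|u_0\|_\infty$ is to be chosen large. Testing the equation for $u$ against $(u-K_n)_+$ and integrating by parts produces the weighted energy inequality
\begin{equation*}
\sup_{0 \le t \le \mathcal T}\int \varrho_0(u-K_n)_+^2\,dy + \int_0^{\mathcal T}\!\!\int \frac{|((u-K_n)_+)_y|^2}{J}\,dy\,dt \le C\,\Phi(K_n),
\end{equation*}
where the singular divergence is absorbed into the Dirichlet term on the left after an integration by parts and Cauchy--Schwarz (using $(\log\varrho_0)' \in L^\infty$), while $R\varrho_0 v_y/J$ is handled by Young's inequality together with the time-integrated bounds on $v_y$ already furnished by Theorem \ref{ThmGloUni-v'} and the singularly weighted energy estimates referred to in the introduction. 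A one-dimensional Sobolev embedding on $(u-K_n)_+$ then yields the nonlinear recursion $\Phi(K_{n+1}) \le C^n \Phi(K_n)^{1+\alpha}$ for some $\alpha > 0$, and choosing $\bar K$ large enough that $\Phi(K_0)$ is sufficiently small forces $\Phi(K_n) \to 0$, whence $u \le 2\bar K$ pointwise on $\mathbb R \times [0,\mathcal T]$.

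The principal obstacle lies in the degeneracy of the weight $\varrho_0$ at infinity: the energy estimate only controls a $\varrho_0$-weighted $L^2$ norm of $(u-K_n)_+$, while the level set $\{u(\cdot,t) > K_n\}$ is a priori not bounded, so neither the standard Poincar\'e inequality nor a naive Sobolev embedding is directly applicable in the iteration. Overcoming this requires combining the dissipative terms $-\kappa\vartheta_y^2/(J\vartheta^2)$ and $-\mu v_y^2/(J\vartheta)$ with the prescribed far-field decay rate of $\varrho_0$ (no faster than $O(1/y^2)$) through singularly weighted a priori estimates on $\vartheta$ and $v$, so that at every $t \le \mathcal T$ the relevant super-level sets of $u$ sit inside a bounded window of $y$ where $\varrho_0$ is bounded below by a positive constant; on such a window the weight becomes harmless and the De Giorgi machinery closes as above, yielding the desired lower bound on $s$.
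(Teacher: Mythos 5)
Your reduction to an upper bound on $u = (\gamma-1)\log\varrho_0 - \log\vartheta$ is the right starting point (this $u$ is $-S$ in the paper's notation), and you correctly compute the parabolic equation it satisfies and correctly identify that (H1)--(H2) make $(\log\varrho_0)'$ and $(\log\varrho_0)''$ uniformly bounded. But the De Giorgi iteration as you have set it up does not close, and your proposed repair in the last paragraph is not only vague but, as stated, circular.

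The crux is your choice of test function. Testing the $u$-equation against $(u-K_n)_+$ produces, after integration by parts,
\begin{equation*}
\frac{c_v}{2}\frac{d}{dt}\|\sqrt{\varrho_0}\,(u-K_n)_+\|_2^2 + \kappa\int_{\mathbb R}\frac{|\partial_y(u-K_n)_+|^2}{J}\,dy \le (\text{source}),
\end{equation*}
so the elliptic side controls $\|\partial_y(u-K_n)_+\|_2$ but the parabolic side only controls $\|\sqrt{\varrho_0}\,(u-K_n)_+\|_2$. The one-dimensional Sobolev bound $\|(u-K_n)_+\|_\infty^2 \lesssim \|(u-K_n)_+\|_2\,\|\partial_y(u-K_n)_+\|_2$ requires the \emph{unweighted} $L^2$ norm, which you never control, because $\varrho_0$ degenerates at infinity. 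That is precisely why, as you yourself observe, "neither the standard Poincaré inequality nor a naive Sobolev embedding is directly applicable." Your proposed fix --- arguing that the super-level sets of $u$ sit inside a bounded window of $y$ where $\varrho_0$ is bounded away from zero --- is exactly the conclusion you are trying to establish: $u \le 2\bar K$ everywhere is what you want to prove, and assuming that at positive times the level sets $\{u(\cdot,t) > K_n\}$ stay compact presupposes that $\vartheta \gtrsim \varrho_0^{\gamma-1}$ at infinity, i.e., the uniform lower bound on the entropy itself. Nothing in the singularly weighted estimates already available (Propositions 4.4--4.7 control $\sqrt{\varrho_0}\vartheta$, $\sqrt{\varrho_0}\vartheta_y$, etc.) rules out that at time $t>0$ the temperature becomes much smaller than $\varrho_0^{\gamma-1}$ on an unbounded set.

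The paper closes this gap with a different, essential idea: the De Giorgi iteration is run with the \emph{singularly weighted} test function $\varrho_0^{-1}(S-\ell)_-$ (implemented through a regularized weight $\varrho_0/\varrho_\delta^2$ and a cut-off $\varphi_r$, then passing $r\uparrow\infty$, $\delta\downarrow0$). This division by $\varrho_0$ makes the degenerate factor in front of $\partial_t S$ disappear, so that the resulting energy inequality (Proposition 5.2) controls the \emph{unweighted} $\sup_t\|(S-\ell)_-\|_2^2$ together with $\int_0^T\|\varrho_0^{-1/2}\partial_y(S-\ell)_-\|_2^2\,dt$, and the Gagliardo--Nirenberg step in Proposition 5.3 then closes. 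Two other ingredients you gloss over are also load-bearing: the completion of the square $\mu v_y^2/\vartheta - R\varrho_0 v_y = \tfrac{\mu}{\vartheta}(v_y - \tfrac{R}{2\mu}\varrho_0\vartheta)^2 - \tfrac{R^2}{4\mu}\varrho_0^2\vartheta$, which turns the sign-indefinite linear term into a good-sign square plus a controlled source of the form $\varrho_0\cdot(\varrho_0\vartheta)$ with $\varrho_0\vartheta\in L^\infty(0,T;L^2)$; and the shift to $S + \underline M_T t$ with $\underline M_T$ chosen so that the $\kappa(\gamma-1)(\varrho_0'/\tilde\varrho_\varepsilon)'/J$ contribution (which has no sign) is dominated after adding $c_v\underline M_T\varrho_0$. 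Your version retains the raw $R\varrho_0 v_y/J$ term and appeals to "Young's inequality together with time-integrated bounds on $v_y$," but without the completion of squares you lose the structure that makes the remaining source compatible with the iteration. So the proposal correctly identifies the target and the obstacle, but misses the paper's key technical device and substitutes an unjustified assumption for it.
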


Note that the entropy $s$ satisfies
\begin{equation}\label{EQENTROPY}
c_v\varrho_0s_t-\kappa\left(\frac{s_y}{J}\right)_y=\kappa R\left(\frac{\varrho_0'}{J\varrho_0}-\frac{J_y}{J^2}\right)_y+\frac{c_v}{J\vartheta}\left(\mu|v_y|^2
+\frac{\kappa}{\vartheta}|\vartheta_y|^2\right).
\end{equation}
So in the non-heat conductive case, $\kappa=0$, the entropy can only increase in time and thus is bounded from below
trivially, while the upper bound of the entropy is achieved by carrying a certain class of singular type energy estimates
in \cite{LIXIN17}. However, in the general case $\kappa>0$, the term $\kappa
R\big(\tfrac{\varrho_0'}{J\varrho_0}-\tfrac{J_y}{J^2}\big)_y$ may cause both the increasing and decreasing of $s$ and gives
some major technical difficulties to get the uniform bounds on $s$. In particular, though the idea of estimating the
entropy by singularly weighted energy estimates may still be useful here, yet it is not enough to yield the uniform bounds
for the entropy. Some additional ideas are needed for the heat conductive case. Indeed, here are some new key
observations:

For the uniform lower bound of $s$, it suffices to estimate a new quantity $S:=\log\vartheta-(\gamma-1)\log\varrho_0$, which can be shown to satisfy
\begin{equation}
  \label{EQS}
  c_v\varrho_0S_t-\kappa\left(\frac{S_y}{J}\right)_y=F_{gd}+F_{ok}+F_{bd},
\end{equation}
where $F_{gd}=\varrho_0f_{gd}$ and $F_{ok}=\varrho_0f_{ok}$ for some
$f_{gd}\in L^\infty(0,T; L^2)$ and $f_{ok}\in L^\infty(0,T;L^\infty)$,
while $F_{bd}$ is given by
\begin{equation}
  \label{FBD}
F_{bd}=\frac{\mu}{J\vartheta}\Big(v_y-\frac{R}{2\mu}\varrho_0\vartheta\Big)^2+\kappa
\frac{|\vartheta_y|^2}{J\vartheta^2}.
\end{equation}
The uniform lower bound of $S$ is achieved by applying some modified De Giorgi type iterations to (\ref{EQS}).
Note that $F_{bd}$ is nonnegative and thus causes no difficulty in proving the uniform lower bound of $S$.
The contributions due to the source term $F_{ok}=\varrho_0f_{ok}$ are dealt with by introducing
an auxiliary function $\tilde S:=S+Mt$, with a sufficient large $M$, which satisfies a similar equation as $S$, but with the term corresponding to
$F_{ok}$ having desired sign. To deal with the source term $F_{gd}$, one notes that $\frac{F}{\varrho_0}\in L^\infty(0,T; L^2)$ is sufficient to get the lower bound of the solution to the model equation
$\varrho_0V_t-V_{yy}=F$, by applying a modified De Giorgi type iteration. Thus, since $\frac{F_{gd}}{\varrho_0}\in
L^\infty(0,T; L^2)$, the contributions due to the term $F_{gd}$ can also be handled.

Technically, due to the degeneracy of equation (\ref{EQS}), different from the classic De Giorgi iteration
for uniform parabolic equations, the testing function used in our iteration
is $\frac{(S-\ell)_-}{\varrho_0}$ instead of $(S-\ell)_-$.
In other words, our energy estimates needed in the
De Giorgi iteration should be of singular type, to which our idea of singular energy estimates
in \cite{LIXIN17} will be useful here.
Moreover, due to the unboundedness of the domain and the lack of integrability of $S$, some suitable cut-off and delicate approximations will be used to justify rigorously the arguments, see Proposition \ref{PropEstL2S} in Section \ref{SecLowEty}.

For the upper bound of the entropy, we need also the following compatibility condition:
\begin{equation}
  \varrho_0^{\frac{1-\gamma}{2}}v_0, \varrho_0^{1-\frac\gamma2}\vartheta_0, \varrho_0^{-\frac\gamma2}G_0\in L^2(\mathbb R).\tag{HS}
\end{equation}

\begin{theorem}
  \label{ThmSuppBd'}
Under the conditions (H0)--(H2) and (HS), the entropy of the unique solution in Theorem \ref{ThmGloUni-v'} is uniformly bounded from above, up to any finite time, as long as it holds initially.
\end{theorem}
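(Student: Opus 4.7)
Since Theorem \ref{ThmGloUni-v'} already gives a uniform upper bound on $J$, and the entropy formula (\ref{Entropy}) reads
\[
s=c_v\left(\log\tfrac RA+(\gamma-1)\log J+S\right),\qquad S:=\log\vartheta-(\gamma-1)\log\varrho_0,
\]
it suffices to bound $S$ from above on $[0,\mathcal T]$. The governing equation is (\ref{EQS}), and the plan parallels the lower-bound proof of Theorem \ref{ThmSlowBd'}: I would run a De Giorgi type iteration on the level sets $\{S>\ell\}$, testing with the singularly weighted function $(S-\ell)_+/\varrho_0$. The essential new difficulty compared with the lower bound is that the dissipative source $F_{bd}\ge0$ in (\ref{FBD}) now pushes the iteration the wrong way, and it is precisely the compatibility condition (HS) that supplies the tool to tame it.

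The first stage is to propagate the singularly weighted bounds encoded in (HS). Testing the momentum equation (\ref{EqV}) against $\varrho_0^{-\gamma}v$, the temperature equation (\ref{EqTheta}) against $\varrho_0^{1-\gamma}\vartheta$, and the effective-flux equation (\ref{EqG}) against $\varrho_0^{-\gamma}G$, and combining so that the coupling through the $R\varrho_0\vartheta/J$ term cancels in the spirit of the non-heat-conductive analysis \cite{LIXIN17}, one expects
\[
\sup_{0\le t\le\mathcal T}\Bigl(\|\varrho_0^{\frac{1-\gamma}{2}}v\|_2+\|\varrho_0^{1-\frac{\gamma}{2}}\vartheta\|_2+\|\varrho_0^{-\frac{\gamma}{2}}G\|_2\Bigr)\le C,
\]
together with corresponding dissipation bounds in $L^2(0,\mathcal T;L^2)$. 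Using the identity $v_y=(J/\mu)G+(R/\mu)\varrho_0\vartheta$ from (\ref{ExG}), these translate into weighted control of $\varrho_0^{-\gamma/2}v_y$, which is the key input for absorbing $F_{bd}$.

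Next, expanding
\[
F_{bd}=\frac{\mu v_y^2}{J\vartheta}-\frac{R\varrho_0 v_y}{J}+\frac{R^2\varrho_0^2\vartheta}{4\mu J}+\kappa\frac{|\vartheta_y|^2}{J\vartheta^2},
\]
on the set $\{S>\ell\}$ one has $\vartheta\ge c\,\varrho_0^{\gamma-1}e^{\ell}$, so each factor $1/\vartheta$ produces a multiplicative gain $e^{-\ell}\varrho_0^{1-\gamma}$ that exactly matches the singular exponents from Stage~1. After dividing by $\varrho_0$, the two squared pieces of $F_{bd}/\varrho_0$ contribute to the iteration energy with this $e^{-\ell}$ factor, while the unsigned pieces are absorbed directly by the propagated weighted norms. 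The sources $F_{gd}$ and $F_{ok}$ are handled as in the lower-bound proof: the shift $\tilde S:=S-Mt$ with $M$ large makes the effective source produced by $F_{ok}$ sign-favourable for an upper-bound iteration, and $F_{gd}/\varrho_0\in L^\infty(0,\mathcal T;L^2)$ is absorbed by Cauchy--Schwarz. Plugging everything into the energy identity obtained by testing (\ref{EQS}) against $(\tilde S-\ell)_+/\varrho_0$, and combining with a singular Sobolev embedding supplied by $(1/\sqrt{\varrho_0})'\in L^\infty$ from (H1), one obtains a nonlinear recursion $\mathcal E(\ell_{k+1})\le C^{k}\mathcal E(\ell_k)^{1+\beta}$ along a dyadic sequence of levels, forcing $\mathcal E(\ell_*)=0$ for some finite $\ell_*$ and hence $\tilde S\le\ell_*$.

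The main obstacle I foresee is twofold. Rigorously, $(S-\ell)_+$ is not a priori in $L^2(\mathbb R)$ --- $S$ is only controlled through singularly weighted norms, and the test function $(S-\ell)_+/\varrho_0$ is unbounded where $\varrho_0$ decays --- so a careful cut-off and approximation scheme in the spirit of Proposition \ref{PropEstL2S} is required, and the gain $e^{-\ell}$ from the high-level analysis must be preserved under this regularization. Analytically, the most delicate terms are the unsigned cross and pure density pieces $R\varrho_0 v_y/J$ and $R^2\varrho_0^2\vartheta/(4\mu J)$ in the expansion of $F_{bd}$: carrying no extra factor of $1/\vartheta$, they cannot borrow the $e^{-\ell}$ gain, and their treatment relies entirely on the full strength of the singularly weighted bounds from Stage~1, with the balancing of the various $\varrho_0$-exponents against the exponent in the De Giorgi test function being the combinatorial computation that ultimately closes the argument.
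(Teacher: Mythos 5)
Your proposal runs the De Giorgi iteration on the modified entropy equation (\ref{EQS}) directly, testing with $(S-\ell)_+/\varrho_0$; but this is precisely the route the paper rules out just before Theorem~\ref{ThmSuppBd'}, and the obstruction it names is one your exponent-matching claim does not resolve. The problematic piece of $F_{bd}$ is $\kappa|\vartheta_y|^2/(J\vartheta^2)$. On $\{S>\ell\}$ one has $\vartheta^{-1}\le e^{-\ell}\varrho_0^{1-\gamma}$, so after the level-set substitution the contribution of this piece to the weighted energy inequality is
\[
\kappa\,e^{-2\ell}\int_{\mathbb R}\frac{|\vartheta_y|^2}{J\,\varrho_0^{2\gamma-1}}\,(S-\ell)_+\,dy,
\]
and closing it requires $\varrho_0^{\frac12-\gamma}\vartheta_y\in L^2(0,T;L^2)$. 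Your Stage~1 (which is indeed the paper's Propositions~\ref{PropWestvtheta}--\ref{PropWestG}) gives only $\varrho_0^{\frac{1-\gamma}{2}}\vartheta_y=\varrho_0^{\frac12-\frac\gamma2}\vartheta_y\in L^2(0,T;L^2)$, and since $\gamma>1$ one has $\tfrac12-\gamma<\tfrac12-\tfrac\gamma2$, so the needed weight is strictly more singular than what is controlled. (For the $v_y^2/\vartheta$ piece the exponents do match: $\varrho_0^{-\gamma/2}v_y$ is exactly the Stage~1 quantity, which is why you are not warned by that term.) The gap cannot be filled by sharpening Stage~1: testing (\ref{EqTheta}) with $\varrho_0^{1-2\gamma}\vartheta$ would require $\varrho_0^{1-\gamma}\vartheta_0\in L^2$, and when $\vartheta_0\sim\varrho_0^{\gamma-1}$ (the natural regime of bounded initial entropy) that quantity is merely bounded, not square-integrable on $\mathbb R$ --- which is also why (HS) assumes only $\varrho_0^{1-\frac\gamma2}\vartheta_0\in L^2$. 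Writing $\vartheta_y/\vartheta=S_y+(\gamma-1)\varrho_0'/\varrho_0$ and trying instead to absorb $\kappa\int J^{-1}\varrho_0^{-1}|\partial_y(S-\ell)_+|^2(S-\ell)_+\,dy$ into the dissipation $\kappa\int J^{-1}\varrho_0^{-1}|\partial_y(S-\ell)_+|^2\,dy$ also fails, since $(S-\ell)_+$ has no a priori pointwise bound.

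The paper sidesteps this entirely by iterating on the temperature variable, not the entropy: it introduces $\vartheta_\ell:=\vartheta-\ell\varrho_0^{\gamma-1}e^{\overline M_Tt}$ (the temperature-level incarnation of your $S-Mt$ shift), derives (\ref{EqThetaell}) whose source $v_yG+\cdots$ is algebraic in $(v_y,G,J_y)$ and contains no $|\vartheta_y|^2$ term, tests with $\varrho_0^{1-2\gamma}(\vartheta_\ell)_+$, and iterates $Q_\ell=\sup_t\|\varrho_0^{1-\gamma}(\vartheta_\ell)_+\|_2^2+\int_0^T\|\varrho_0^{\frac12-\gamma}\partial_y(\vartheta_\ell)_+\|_2^2\,dt$ (Propositions~\ref{PropWL2EstThetaell}--\ref{PropDGThetea}). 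There $\varrho_0^{\frac12-\gamma}\partial_y(\vartheta_\ell)_+$ appears only on the dissipation side, localized to the shrinking level sets, which is manageable; in your scheme the same singular weight on $\vartheta_y$ appears on the source side, globally, and is not. Your Stage~1 estimates and the direction of the $Mt$-shift are both correct and are reused verbatim by the paper; the part that must change is the equation on which, and the unknown in which, the iteration is performed.
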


As $J$ is uniformly positive, a necessary and sufficient condition for the uniform boundedness of the entropy is that $\vartheta$ tends to zero at the same rate as $\varrho_0^{\gamma-1}$ at the far field, which unfortunately is not guaranteed by the solution spaces used in \cite{LIONS98,FEIREISL01,JIAZHA03,FEIREISL04B,SALSTR93,CHOKIM04,CHOKIM06-1,
CHOKIM06-2,HLX12,WENZHU17,LWX}. Indeed, the solutions established in these papers have the $L^2$ integrability of $\sqrt{\varrho_0}\vartheta$, but not of $\vartheta$ itself, which allows $\vartheta$ not to decay to zero or even to
grow to infinity at the far field.

%
Due to the singular term $\frac{c_v}{J\vartheta}\left(\mu|v_y|^2
+\frac{\kappa}{\vartheta}|\vartheta_y|^2\right)$ in (\ref{EQENTROPY}), performing the same type of De Giorgi iteration to (\ref{EQENTROPY}) as before will not lead to the desired upper bound for the entropy.
In fact, for this case, instead of working on the entropy equation ((\ref{EQENTROPY})or (\ref{EQS})) directly,
we will apply a modified De Giorgi iteration to the temperature equation, with some elaborate singular type
energy estimates. The main steps can be sketched as follows. Note that the entropy has uniform upper bound iff
$$
\vartheta_\ell:=\vartheta-\ell\varrho_0^{\gamma-1}e^{Mt}\leq0,\quad\text{or equivalently}\quad (\vartheta_\ell)_+=0,
$$
for some positive numbers $\ell$ and $M$. $\vartheta_\ell$ satisfies
$$
c_v\varrho_0\partial_t\vartheta_\ell-\kappa\partial_y\left(\frac{\partial_y\vartheta_\ell}{J}\right) =v_yG+``\text{other terms}".
$$
Testing the above equation with $\varrho_0^{1-2\gamma}(\vartheta_\ell)_+$ yields
\begin{eqnarray*}
&&\frac{c_v}{2}\frac{d}{dt}\|\varrho_0^{1-\gamma}(\vartheta_\ell)_+\|_2^2+\kappa\|\varrho_0^{\frac12-\gamma}
\partial_y(\vartheta_\ell)_+\|_2^2 \\
&\leq &C\int_\mathbb R(|\varrho_0^{-\frac\gamma2}G|^2+|\varrho_0^{1-\frac\gamma2}\vartheta|^2)\varrho_0^{1-\gamma}\vartheta_\ell dy+``\text{other terms}",
\end{eqnarray*}
see Proposition \ref{PropWL2EstThetaell}, below.
It should be noted here that the choice of
the singularly weighted test function $\varrho_0^{1-2\gamma}(\vartheta_\ell)_+$ is crucial.
The above inequality indicates the necessity of carrying out the energy estimates
for $\varrho_0^{1-\frac\gamma2}\vartheta$ and $\varrho_0^{-\frac\gamma2}G$; these estimates, thanks to
the assumption (H1), can be achieved by testing
(\ref{EqV}), (\ref{EqTheta}), and (\ref{EqG}) with $\varrho_0^{-\gamma}v$, $\varrho_0^{1-\gamma}\vartheta$,
and $J\varrho_0^{-\gamma}G$, respectively, see Propositions \ref{PropWestvtheta} and \ref{PropWestG}, below.
With these estimates in hand, one can proceed the iteration to get finally $(\vartheta_\ell)_+\equiv0$ for some positive
$\ell$, which yields the desired upper bound of the entropy.

Some remarks are in order.

\begin{remark}
(i) Conditions $(\frac{1}{\sqrt{\varrho_0}})',(\frac{1}{\varrho_0})''\in L^\infty(\mathbb R)$ in (H1)--(H2) are essentially slow decay assumptions on $\varrho_0$ at the far field. In fact, for
$\varrho_0(y)=\frac{K_\varrho}{\langle y\rangle^{\ell_\varrho}},$ with $\langle y\rangle=(1+y^2)^{\frac12}$ and positive constants $K_\varrho$ and $\ell_\varrho$, it holds that
\begin{eqnarray*}
  \left(\frac{1}{\sqrt{\varrho_0}}\right)'\in L^\infty\Leftrightarrow0\leq\ell_\varrho\leq2\quad\text{and}\quad
  \left(\frac{1}{\varrho_0}\right)''\in L^\infty\Leftrightarrow0\leq\ell_\varrho\leq2.
\end{eqnarray*}

(ii) All results in the above theorems still hold true if replacing the assumptions $(\frac{1}{\sqrt{\varrho_0}})',(\frac{1}{\varrho_0})''\in L^\infty(\mathbb R)$ in (H1) and (H2) by the following weaker one:
$$
\frac{{\underline K}_\varrho}{\langle y\rangle^{\bar{\ell}_\varrho}}\leq\varrho_0(y)\leq\frac{{\bar K}_\varrho}{\langle y\rangle^{\underline{\ell}_\varrho}}, \quad\forall y\in\mathbb R,
$$
for some constants $0<\underline K_\varrho\leq\bar K_\varrho$ and $0\leq\underline\ell_\varrho\leq\bar\ell_\varrho\leq2$.
\end{remark}

\begin{remark}
\label{RemarkInitialSet}
Let $K_\varrho$ and $\frac1\gamma<\ell_\varrho\leq2$ be positive constants. Choose
$$
\varrho_0(y)=\frac{K_\varrho}{\langle y\rangle^{\ell_\varrho}},\quad J_0\equiv1,\quad v_0\in C_c^\infty(\mathbb R), \quad s_0\in W^{1,\infty}(\mathbb R), \quad \vartheta_0=\frac ARe^{\frac{s_0}{c_v}}\varrho_0^{\gamma-1}.
$$
Then, one can verify easily that (H0)--(H2) and (HS) hold.
Therefore, the set of the initial data that fulfills the conditions in the above theorems is not empty.
\end{remark}

\begin{remark}
Both the assumptions that there is no interior vacuum and that the initial density decays slowly at the far field are necessary conditions for guaranteeing the uniform boundedness of the entropy. In fact, if either there is an interior
point vacuum or the density decays to vacuum sufficient fast at the far field, then the entropy will become unbounded immediately after the initial time, see Li-Xin \cite{LIXININPREPARE}.
\end{remark}

\begin{remark}
It should be emphasized that though we deal with only the one dimensional case here, the main ideas of combining
singularly weighted energy estimates with some deliberately modified De Giorgi iterations can be used to derive the
uniform boundedness of the entropy for the multi-dimensional case at least locally in time. Indeed, by adapting
these ideas
with some more involved and complicated calculations, one can obtain that the boundedness of the entropy can be propagated
by the multi-dimensional compressible Navier-Stokes system up to the maximal existing time of the strong solution under
similar conditions on the initial density. However,
the global in time existence of strong solutions for general initial data is still unknown.
\end{remark}

%

The rest of this paper is arranged as follows: in Section \ref{Secloc-nv}, we
consider the system with the initial density
uniformly away from zero, prove the local existence of
solutions, and carry out some a priori estimates independent of the positive lower bound
of the initial density; Section \ref{SecLoc-v} is devoted to the proof of the local existence of solutions in the presence of far field vacuum; while the global existence and uniqueness of solutions are shown in Section
\ref{SecGlo-v}; and finally in Section \ref{SecLowEty} and Section \ref{SecUppEty}, we establish the
uniform lower and upper bounds of the entropy, respectively, by performing the singular type energy estimates and using some suitably modified De Giorgi type iterations.

Throughout this paper, $C$ will denote a genetic positive constant, which may vary from line to
line. For simplicity of presentations, the quantities, on which the constant $C$ depends, will be emphasized only in the statements, but not in the proofs, of the theorems, propositions, and corollaries.

\section{Local existence and a priori estimates in the absence of vacuum}
\label{Secloc-nv}
Let $\underline\varrho, \bar\varrho, \underline J,$ and $\bar J$ be positive constants. Assume that
\begin{equation}\label{ass-nv}
  \left\{
    \begin{aligned}
      &0<\underline\varrho\leq\varrho_0(y)\leq\bar\varrho<\infty,
      \quad 0<\underline J\leq J_0(y)\leq\bar J<\infty,\quad\forall y\in\mathbb R, \qquad \\
      &\varrho_0'\in L^\infty(\mathbb
      R),\quad J_0'\in L^2(\mathbb R),\quad v_0\in H^1(\mathbb R),\quad 0\leq\vartheta_0\in H^1(\mathbb R).
    \end{aligned}
  \right.
\end{equation}

The following local existence result holds.

\begin{proposition}
\label{PropLocNovacuum}
Under the assumption (\ref{ass-nv}), there is a positive time $T_0$ depending only on $\underline\varrho,
\bar\varrho, \underline J, \bar J, \|\varrho_0'\|_\infty, \|J_0'\|_2, \|v_0\|_{H^1}$, and $\|\theta_0\|_{H^1}$, such that the problem (\ref{EqJ})--(\ref{IC}) with the following far field condition
\begin{equation}
  \label{BC}
(v,\vartheta)\rightarrow0,\quad\mbox{as }y\rightarrow\infty,
\end{equation}
has a unique
solution $(J, v, \theta)$, on $\mathbb R\times(0,T_0)$, satisfying
\begin{eqnarray*}
  &&\frac{\underline J}{2}\leq J\leq2\bar J,\quad\mbox{on }\mathbb R\times[0,T_0],\quad J-J_0\in C([0,T_0]; H^1), \\
  &&v\in C([0,T_0]; H^1)\cap L^2(0,T_0; H^2),\quad 0\leq\vartheta\in C([0,T_0]; H^1)\cap L^2(0,T_0; H^2),\\
  &&J_t\in L^\infty(0,T_0; L^2)\cap L^2(0,T_0; H^1),\quad v_t\in L^2(0,T_0; L^2),\quad \vartheta_t\in L^2(0,T_0; L^2).
\end{eqnarray*}
\end{proposition}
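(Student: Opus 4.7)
The plan is to construct the solution by a linearization-iteration scheme, exploiting the fact that under (\ref{ass-nv}) the operators involved are uniformly parabolic. Define $(J^{(0)},v^{(0)},\vartheta^{(0)})=(J_0,0,0)$ and, given $(J^{(n)},v^{(n)},\vartheta^{(n)})$, determine $(J^{(n+1)},v^{(n+1)},\vartheta^{(n+1)})$ by first integrating the ODE $J^{(n+1)}_t=v^{(n)}_y$ in time with initial datum $J_0$, then solving the linear parabolic problem
\[
\varrho_0 v^{(n+1)}_t-\mu\Big(\tfrac{v^{(n+1)}_y}{J^{(n+1)}}\Big)_y=-R\Big(\tfrac{\varrho_0\vartheta^{(n)}}{J^{(n+1)}}\Big)_y,
\]
followed by
\[
c_v\varrho_0\vartheta^{(n+1)}_t-\kappa\Big(\tfrac{\vartheta^{(n+1)}_y}{J^{(n+1)}}\Big)_y=-R\tfrac{\varrho_0}{J^{(n+1)}}\vartheta^{(n+1)}v^{(n+1)}_y+\mu\tfrac{|v^{(n+1)}_y|^2}{J^{(n+1)}},
\]
each with the decay condition (\ref{BC}). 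Since $\varrho_0\geq\underline\varrho>0$, dividing by $\varrho_0$ reduces both problems to standard linear parabolic equations with bounded, uniformly elliptic principal part (provided $J^{(n+1)}$ stays in a definite positive range), so each iterate exists in $H^1\cap L^2(H^2)$ by classical theory.

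The next task is to close uniform bounds on a common interval $[0,T_0]$. The pointwise control on $J^{(n+1)}$ is immediate from
$\|J^{(n+1)}-J_0\|_\infty\leq\int_0^t\|v^{(n)}_y\|_\infty\,d\tau$, so one chooses $T_0$ small enough that $\underline J/2\leq J^{(n+1)}\leq 2\bar J$ as soon as the right-hand side in Sobolev norms is bounded. For $v^{(n+1)}$ I would test the momentum equation by $v^{(n+1)}$ and by $v^{(n+1)}_t$ to get $L^\infty_t L^2_y$ bounds on $v^{(n+1)}$ and $v^{(n+1)}_y$ and an $L^2_tL^2_y$ bound on $v^{(n+1)}_t$ and $v^{(n+1)}_{yy}$, using the uniform ellipticity and the preceding control on $\vartheta^{(n)}$. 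An analogous pair of multipliers $\vartheta^{(n+1)}$ and $\vartheta^{(n+1)}_t$ applied to the temperature equation yields the corresponding bounds for $\vartheta^{(n+1)}$; here the nonlinear source $\mu|v^{(n+1)}_y|^2/J^{(n+1)}$ is controlled in $L^2_tL^2_y$ by interpolating $\|v^{(n+1)}_y\|_4^2\lesssim\|v^{(n+1)}_y\|_2\|v^{(n+1)}_{yy}\|_2$, which is absorbed by the dissipation after using Gronwall and shrinking $T_0$. Differentiating $J^{(n+1)}_t=v^{(n)}_y$ in $y$ gives the $L^\infty_t L^2_y$ estimate on $J^{(n+1)}_y$. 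All constants here depend only on the data listed in the statement, so by induction on $n$ the whole sequence lives in a fixed bounded set of the target regularity class.

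Convergence then follows by a contraction-type argument in a weaker norm. Denoting differences with a tilde, one writes out the equations satisfied by $(\widetilde J^{(n+1)},\widetilde v^{(n+1)},\widetilde\vartheta^{(n+1)})$ with forcing depending linearly on $(\widetilde J^{(n)},\widetilde v^{(n)},\widetilde\vartheta^{(n)})$, and performs the basic $L^2$ energy estimate. The uniform bounds obtained in the previous step allow the forcing to be dominated, giving a recursion of the form
\[
\mathcal N(\widetilde J^{(n+1)},\widetilde v^{(n+1)},\widetilde\vartheta^{(n+1)})(t)\leq CT_0\,\sup_{[0,T_0]}\mathcal N(\widetilde J^{(n)},\widetilde v^{(n)},\widetilde\vartheta^{(n)}),
\]
for a suitable norm $\mathcal N$; shrinking $T_0$ once more makes this a contraction, and the limit $(J,v,\vartheta)$ inherits the uniform bounds and solves the nonlinear system. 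Non-negativity of $\vartheta$ follows from the maximum principle applied to the linear temperature equation at each step (the right-hand side $\mu|v_y|^2/J\geq0$), and is preserved in the limit. Uniqueness follows from the same energy calculation applied to the difference of two solutions.

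The step I expect to be the main obstacle is closing the a priori bounds on $\vartheta$: the quadratic source $\mu|v_y|^2/J$ forces one to couple the velocity and temperature estimates, and the higher-order bound on $v_{yy}$ that controls $\|v_y\|_4$ is itself obtained from the momentum equation whose forcing $R(\varrho_0\vartheta/J)_y$ contains $\vartheta_y$. This circularity has to be broken by ordering the tests carefully (first $v,v_y$ at the $L^2$ level, then $\vartheta,\vartheta_y$, and only afterward the $v_t,\vartheta_t$ tests giving $H^2$ regularity), by using Gronwall-in-time with the quadratic nuisance absorbed into the dissipation on a short time interval, and by exploiting the lower bound on $\varrho_0$ which keeps every constant finite.
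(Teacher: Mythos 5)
Your proposal follows essentially the same strategy as the paper: linearize the system (solve the ODE for $J$ from a given velocity, then uniformly parabolic linear equations for $v$ and $\vartheta$), derive uniform $H^1\cap L^2(H^2)$ bounds on a short time interval by energy estimates in the spirit of Propositions \ref{PropEstL2-NV}--\ref{PropEstT}, and close via a contraction argument in a weaker norm. The only cosmetic differences are that the paper presents this as a Banach fixed-point on the map $(V,\Theta)\mapsto(v,\vartheta)$ with both right-hand sides frozen in the given data, whereas you stagger the iteration by feeding the fresh $v^{(n+1)}$ into the temperature step; both variants are standard and lead to the same conclusion.
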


\begin{proof}
This can be proved in the standard way by using the fixed point argument based on the following
linearized system
\begin{eqnarray}
  &&J_t=V_y, \label{L1}\\
  &&\varrho_0v_t-\mu\left(\frac{v_y}{J}\right)_y=-R\left(\frac{\varrho_0}{J}
  \Theta\right)_y, \label{L2}\\
  &&c_v\varrho_0\vartheta_t-\kappa\left(\frac{\vartheta_y}{J}\right)_y
  =\mu\frac{V_y^2}{J}-R\frac{\varrho_0}{J}\Theta V_y, \label{L3}
\end{eqnarray}
subject to (\ref{IC}) and (\ref{BC}), for given $(V,\Theta)$. Indeed, the classic theory for uniformly parabolic equations yields a
unique global solution $(v,\vartheta)$ to the system (\ref{L2})--(\ref{L3}), subject to (\ref{IC}) and (\ref{BC}). Thus, one
can define a solution mapping $(V,\Theta)\rightarrow(v,\vartheta)$. Then, by carrying out the energy estimates, similar to (actually easier than)
those we will derive in the rest of this section, one can see that this solution mapping
fulfills all the conditions of the Banach's contracting fixed point theorem, and thus has a unique fixed point in the corresponding Banach space, which yields the unique solution to the system (\ref{EqJ})--(\ref{EqTheta}), subject to (\ref{IC}) and (\ref{BC}).
\end{proof}

By applying Proposition \ref{PropLocNovacuum} iteratively, one can extend the local solution $(J, v, \vartheta)$ uniquely to the maximal time $T_{\text{max}}$ of existence, which is characterized as
\begin{equation}
  \label{Tm}
  \limsup_{T\rightarrow T_{\text{max}}^-}
  \left((\inf_{y\in\mathbb R}J)^{-1}+\sup_{y\in\mathbb R}J+\|J_y\|_2+\|v\|_{H^1}+\|\vartheta\|_{H^1}\right)=\infty.
\end{equation}
In the rest of this section, it is always assumed that the unique solution $(\rho, v,\theta)$ has already been extended uniquely to the
maximal time of existence $T_{\text{max}}$.

One aim of this section is to show $T_{\text{max}}$ is independent of $\underline\varrho$. To this end, we set
\begin{equation}
  \label{T_*}
  T_*:=\max\left\{T\in(0,T_{\text{max}})~\Big|~\frac{\underline J}{3}\leq J\leq 3\bar J \mbox{ on }\mathbb R\times[0,T]\right\}.
\end{equation}
In the rest of this section, we will focus on the solutions in the time interval $(0,T_*)$, so that $J$ has the positive lower and upper bounds stated in (\ref{T_*}).

\subsection{A priori $L^2$ estimates}

\begin{proposition}
\label{PropEstL2-NV}
There is a positive time $T_{\text{ode}}$ depending only on
$c_v, R, \mu, \kappa, \|\varrho_0\|_{W^{1,\infty}}$, $\underline J,$ and $\bar J$, such that
\begin{eqnarray*}
  \sup_{0\leq t\leq T_{\text{ode}}^*}\|(\sqrt{\varrho_0}v,\sqrt{\varrho_0}E)\|_2^2
  +\int_0^{T_{\text{ode}}^*}(\|\varrho_0\vartheta\|_\infty^2+
  \left\|\left(v_y,vv_y,\vartheta_y\right)\right\|_2^2)dt \leq \mathcal E_0,
\end{eqnarray*}
where $E=\frac{v^2}{2}+c_v\vartheta$, $T_{\text{ode}}^*:=\min\{T_*, T_{\text{ode}},1\}$, and $\mathcal E_0$ is a positive constant depending only on $c_v, R, \mu, \kappa, \underline J, \bar J, \|\varrho_0\|_{W^{1,\infty}}, \|\sqrt{\varrho_0}v_0\|_2,$ and $\|\sqrt{\varrho_0} E_0\|_2$.
\end{proposition}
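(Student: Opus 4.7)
The plan is to derive a closed differential inequality for $\Phi(t) := \|\sqrt{\varrho_0}v(t)\|_2^2 + \|\sqrt{\varrho_0}E(t)\|_2^2$ on $(0,T_*)$, and then extract $T_{\text{ode}}$ by a short-time ODE comparison argument. Throughout I use the uniform bounds $\underline J/3 \leq J \leq 3\bar J$ coming from the definition (\ref{T_*}) of $T_*$, so that $1/J$ effectively behaves like a constant.

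First, test (\ref{EqV}) against $v$ and integrate by parts to get
\begin{equation*}
\tfrac{1}{2}\tfrac{d}{dt}\|\sqrt{\varrho_0}v\|_2^2 + \mu\int \tfrac{v_y^2}{J}\,dy = R\int \tfrac{\varrho_0\vartheta\,v_y}{J}\,dy,
\end{equation*}
whose right-hand side is absorbed by Young's inequality, using $\int\varrho_0^2\vartheta^2\,dy \leq \bar\varrho\int\varrho_0\vartheta^2\,dy \leq (\bar\varrho/c_v^2)\|\sqrt{\varrho_0}E\|_2^2$. Next, $v\cdot$(\ref{EqV})$\,+\,$(\ref{EqTheta}) yields the balance law
\begin{equation*}
\varrho_0 E_t = \mu(vv_y/J)_y + \kappa(\vartheta_y/J)_y - R(\varrho_0 v\vartheta/J)_y;
\end{equation*}
testing this by $E$ and using $E_y = vv_y + c_v\vartheta_y$ leads to
\begin{equation*}
\tfrac{1}{2}\tfrac{d}{dt}\|\sqrt{\varrho_0}E\|_2^2 + \mu\int\tfrac{v^2 v_y^2}{J}\,dy + \kappa c_v\int\tfrac{\vartheta_y^2}{J}\,dy = R\int\tfrac{\varrho_0 v^2 v_y\vartheta}{J}\,dy + Rc_v\int\tfrac{\varrho_0 v\vartheta\vartheta_y}{J}\,dy - (\mu c_v+\kappa)\int\tfrac{vv_y\vartheta_y}{J}\,dy,
\end{equation*}
where the mixed term is absorbed into the two good dissipation terms via Cauchy--Schwarz with a small parameter.

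The two cubic nonlinearities on the right are controlled via
\begin{equation*}
\int\varrho_0^2 v^2\vartheta^2\,dy \leq \|\varrho_0\vartheta\|_\infty\,\|\sqrt{\varrho_0}\vartheta\|_2\,\|\sqrt{\varrho_0}\,v^2\|_2 \leq C\|\varrho_0\vartheta\|_\infty\|\sqrt{\varrho_0}E\|_2^2,
\end{equation*}
where $\int\varrho_0 v^4\,dy \leq 4\|\sqrt{\varrho_0}E\|_2^2$ and $\|\sqrt{\varrho_0}\vartheta\|_2 \leq \|\sqrt{\varrho_0}E\|_2/c_v$ have been used. The critical ingredient is an $\underline\varrho$-independent control of $\|\varrho_0\vartheta\|_\infty$: writing $(\varrho_0\vartheta)^2(y) = 2\int_{-\infty}^{y}\varrho_0\vartheta(\varrho_0'\vartheta + \varrho_0\vartheta_y)\,dy'$ and using $\varrho_0\leq\bar\varrho$ gives
\begin{equation*}
\|\varrho_0\vartheta\|_\infty^2 \leq 2\|\varrho_0'\|_\infty\|\sqrt{\varrho_0}\vartheta\|_2^2 + 2\bar\varrho^{3/2}\|\sqrt{\varrho_0}\vartheta\|_2\|\vartheta_y\|_2,
\end{equation*}
which never requires pointwise control of $\varrho_0'/\varrho_0$. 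Hence $\|\varrho_0\vartheta\|_\infty\|\sqrt{\varrho_0}E\|_2^2 \leq C(\|\sqrt{\varrho_0}E\|_2^3 + \|\sqrt{\varrho_0}E\|_2^{5/2}\|\vartheta_y\|_2^{1/2})$, and the factor $\|\vartheta_y\|_2^{1/2}$ is Young-absorbed into $\int\vartheta_y^2/J\,dy$.

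Combining the two identities produces
\begin{equation*}
\Phi'(t) + c\bigl(\|v_y\|_2^2 + \|vv_y\|_2^2 + \|\vartheta_y\|_2^2\bigr) \leq C\bigl(1 + \Phi(t)^{5/3}\bigr),
\end{equation*}
so a standard ODE comparison yields a $T_{\text{ode}}>0$, depending only on $\Phi(0)$ and the listed parameters, on which $\Phi(t)\leq 2\Phi(0)$. Integrating the same inequality in $t$ supplies the required $L^2(0,T_{\text{ode}}^*;L^2)$ bounds on $v_y,\,vv_y,\,\vartheta_y$; plugging these back into the displayed $\underline\varrho$-free bound for $\|\varrho_0\vartheta\|_\infty^2$ and integrating in $t$ produces the remaining estimate on $\int_0^{T_{\text{ode}}^*}\|\varrho_0\vartheta\|_\infty^2\,dt$. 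The main obstacle is precisely the $\underline\varrho$-free handling of the cubic terms, and it is resolved by performing the 1D Sobolev estimate directly on $\varrho_0\vartheta$ rather than on $\vartheta$ or $\sqrt{\varrho_0}\vartheta$, which is where spurious factors of $\varrho_0'/\varrho_0$ would otherwise appear.
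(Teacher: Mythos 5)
There is a genuine gap, and it is exactly the point that forces the paper to introduce the $v^3$ energy estimate. After testing the $E$-equation against $E$ you correctly arrive at
\[
\tfrac12\tfrac{d}{dt}\|\sqrt{\varrho_0}E\|_2^2 + \mu\!\int\!\tfrac{v^2v_y^2}{J}\,dy + \kappa c_v\!\int\!\tfrac{\vartheta_y^2}{J}\,dy
= -(\mu c_v+\kappa)\!\int\!\tfrac{vv_y\vartheta_y}{J}\,dy + (\text{cubic terms}),
\]
and then assert that the mixed term ``is absorbed into the two good dissipation terms via Cauchy--Schwarz with a small parameter.'' This never works: with $A=\|vv_y/\sqrt J\|_2$ and $B=\|\vartheta_y/\sqrt J\|_2$, Cauchy--Schwarz reduces the question to whether $\mu A^2-(\mu c_v+\kappa)AB+\kappa c_v B^2\geq 0$, and that quadratic factors as $\mu\big(A-c_vB\big)\big(A-\tfrac{\kappa}{\mu}B\big)$, which is strictly negative whenever $A/B$ lies between $c_v$ and $\kappa/\mu$ (and is identically degenerate even when $\mu c_v=\kappa$). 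Equivalently, $(\mu c_v+\kappa)^2\geq 4\mu\kappa c_v$ always, so no choice of $\varepsilon$ in Young's inequality splits the cross term into pieces dominated by the available $\mu\int v^2v_y^2/J$ and $\kappa c_v\int\vartheta_y^2/J$ dissipations simultaneously. As a result your $\Phi'+c(\|v_y\|_2^2+\|vv_y\|_2^2+\|\vartheta_y\|_2^2)\leq C(1+\Phi^{5/3})$ does not follow; in fact the left-hand side can have no positive coefficient on $\|vv_y\|_2^2$ and $\|\vartheta_y\|_2^2$ at the same time.

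The paper's fix, which your proof needs, is to obtain an \emph{independent} source of $\|vv_y/\sqrt J\|_2^2$-dissipation by testing \eqref{EqV} against $v^3$ (display \eqref{2.4}), which produces $8\mu\|vv_y/\sqrt J\|_2^2$ on the left at the modest cost of another $\int\varrho_0^2\vartheta^2 v^2/J\,dy$ term. Multiplying that estimate by a large constant $A_1$ and adding it to the $v$- and $E$-tests (display \eqref{2.5}) lets the new dissipation dominate the $\|vv_y/\sqrt J\|_2^2$ contributions generated by the mixed term in the $E$-test, which the paper simply bounds above (rather than trying to absorb). This also brings $\|\sqrt{\varrho_0}v^2\|_2^2$ into the energy functional, so the closed ODE is for $\|\sqrt{\varrho_0}v\|_2^2+\|\sqrt{\varrho_0}E\|_2^2+A_1\|\sqrt{\varrho_0}v^2\|_2^2$ rather than your $\Phi$. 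Your remaining ingredients — the $\underline\varrho$-free Sobolev bound $\|\varrho_0\vartheta\|_\infty^2\lesssim \|\sqrt{\varrho_0}E\|_2^2+\|\sqrt{\varrho_0}E\|_2\|\vartheta_y\|_2$ (identical to the paper's \eqref{Re1}), the pointwise bounds on $J$ from the definition of $T_*$, and the short-time ODE comparison — are all correct and match the paper.
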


\begin{proof}
It follows from (\ref{EqV}) and the Cauchy inequality that
\begin{equation}
  \label{2.2}
  \frac{d}{dt}\|\sqrt{\varrho_0}v\|_2^2+\mu\left\|\frac{v_y}{\sqrt J}
  \right\|_2^2\leq \frac{R^2}{\mu}
  \left\|\frac{\varrho_0}{\sqrt J}\vartheta\right\|_2^2.
\end{equation}
Set $E=\frac{v^2}{2}+c_v\vartheta$. Then,
\begin{equation}
  \label{EqE}
  \varrho_0E_t-\kappa\left(\frac{\vartheta_y}{J}\right)_y=\left(\left(\mu\frac{v_y}{J}
  -R\frac{\varrho_0}{J}\vartheta\right)v\right)_y.
\end{equation}
Testing (\ref{EqE}) with $E$ yields
\begin{equation*}
  \frac12\frac{d}{dt}\|\sqrt{\varrho_0}E\|_2^2+\kappa\int_{\mathbb R}
  \frac{\vartheta_y}{J}E_ydy =-\int_{\mathbb R}\left(\mu\frac{v_y}{J}-R
  \frac{\varrho_0}{J}\vartheta\right) vE_ydy. \label{2.3-1}
\end{equation*}
Direct estimates show that
\begin{eqnarray*}
  \int_{\mathbb R}
  \frac{\vartheta_y}{J}E_ydy&\geq&
  \frac{3c_v}{4}\left\|\frac{\vartheta_y}{\sqrt J}\right\|_2^2-\frac{1}{c_v}
  \left\|\frac{vv_y}{\sqrt J}\right\|_2^2,\\
  -\int_{\mathbb R}\left(\mu\frac{v_y}{J}-R
  \frac{\varrho_0}{J}\vartheta\right) vE_ydy
  &\leq&\frac{c_v\kappa}{4}\left\|\frac{\vartheta_y}{\sqrt{J}}\right\|_2^2
  +C\left(\left\|\frac{vv_y}{\sqrt J}\right\|_2^2+\int_{\mathbb R} \frac{\varrho_0^2}{J}\vartheta^2v^2dy\right),
\end{eqnarray*}
and, consequently,
\begin{equation}
  \label{2.3}
  \frac{d}{dt}\|\sqrt{\varrho_0}E\|_2^2+\kappa c_v\left\|\frac{\vartheta_y}{\sqrt J}\right\|_2^2
  \leq C\left(\left\|\frac{vv_y}{\sqrt J}\right\|_2^2+\int_{\mathbb R} \frac{\varrho_0^2}{J}\vartheta^2v^2dy\right).
\end{equation}
Test (\ref{EqV}) with $v^3$ and apply the Cauchy-Schwaz inequality to get
\begin{equation}
  \label{2.4}
  \frac{d}{dt}\|\sqrt{\varrho_0}v^2\|_2^2+8\mu\left\|\frac{vv_y}{\sqrt J}\right\|_2^2\leq\frac{9R^2}{\mu}\int_{\mathbb R}\frac{\varrho_0^2\vartheta^2v^2}{J}dy.
\end{equation}
By (\ref{2.2}), (\ref{2.3}), and (\ref{2.4}), one can choose $A_1$ sufficiently large such that
\begin{align}
\frac{d}{dt}(\|\sqrt{\varrho_0}v\|_2^2+\|\sqrt{\varrho_0}E\|_2^2 +A_1\|\sqrt{\varrho_0}v^2\|_2^2)
  &+\mu\left\|\frac{v_y}{\sqrt J}\right\|_2^2\nonumber\\
  +\kappa c_v\left\|\frac{\vartheta_y}{\sqrt J}\right\|_2^2+A_1\mu\left\|\frac{vv_y}{\sqrt J}\right\|_2^2\leq&~~~~ C\int_{\mathbb R}\left(
  \frac{\varrho_0^2\vartheta^2}{J}+\frac{\varrho_0^2\vartheta^2v^2}{J}\right) dy.\label{2.5}
\end{align}

Due to the definition of $T_*$, one has
\begin{eqnarray*}
  \int_{\mathbb R}\left(
  \frac{\varrho_0^2\vartheta^2}{J}+\frac{\varrho_0^2\vartheta^2v^2}{J}\right) dy
  &\leq&C(1+\|\varrho_0\vartheta\|_\infty)\|\sqrt{\varrho_0}E\|_2^2.
\end{eqnarray*}
Note that
\begin{eqnarray}
\|\varrho_0\vartheta\|_\infty^2&\leq&
2\int_\mathbb R(\varrho_0|\varrho_0'|\vartheta^2+\varrho_0^2\vartheta|\vartheta_y|)dy
\nonumber\\
&\leq&C\left(\|\sqrt{\varrho_0}E\|_2^2+\|\sqrt{\varrho_0}E\|_2
\left\|\frac{\vartheta_y}{\sqrt J}\right\|_2\right). \label{Re1}
\end{eqnarray}
Thus,
\begin{eqnarray*}
  \int_{\mathbb R}\left(
  \frac{\varrho_0^2\vartheta^2}{J}+\frac{\varrho_0^2\vartheta^2v^2}{J}\right) dy&\leq&C\left(1+\|\sqrt{\varrho_0}E\|_2+\|\sqrt{\varrho_0}E\|_2^{\frac12}
\left\|\frac{\vartheta_y}{\sqrt J}\right\|_2^{\frac12}\right)
\|\sqrt{\varrho_0}E\|_2^2\\
&\leq&\varepsilon\left\|\frac{\vartheta_y}{\sqrt J}\right\|_2^2+C_\varepsilon
(1+\|\sqrt{\varrho_0}E\|_2^2)^2,
\end{eqnarray*}
for any $t\in[0,T_*]$, and for any $\varepsilon>0$.

Choosing $\varepsilon$ sufficient small, one obtains from this and (\ref{2.5}) that
\begin{align*}
\frac{d}{dt}(\|\sqrt{\varrho_0}v\|_2^2&+\|\sqrt{\varrho_0}E\|_2^2 +A_1\|\sqrt{\varrho_0}v^2\|_2^2)+\mu\left\|\frac{v_y}{\sqrt J}\right\|_2^2\nonumber\\
  & +\frac{\kappa c_v}{2}\left\|\frac{\vartheta_y}{\sqrt J}\right\|_2^2+A_1\mu\left\|\frac{vv_y}{\sqrt J}\right\|_2^2
  \leq C
(1+\|\sqrt{\varrho_0}E\|_2^2)^2,
\end{align*}
for any $t\in[0,T_*]$.
This and solving an ordinary differential
inequality of the type $f'\leq Cf^2$ yield that there is a positive time $T_{\text{ode}}$
such that
\begin{eqnarray*}
  \sup_{0\leq t\leq T_{\text{ode}}^*}\|(\sqrt{\varrho_0}v,\sqrt{\varrho_0}E)\|_2^2
  +\int_0^{T_{\text{ode}}^*}\left\|\left(v_y,vv_y,\vartheta_y\right) \right\|_2^2dt \leq \mathcal E_0',
\end{eqnarray*}
where $T_{\text{ode}}^*:=\min\{T_*, T_{\text{ode}},1\}$. Then, it follows from (\ref{Re1}) that
$\int_0^{T_{\text{ode}}^*}\|\varrho_0\vartheta\|_\infty^2 dt\leq \mathcal E_0''$.
This proves the conclusion.
\end{proof}

\subsection{A priori $H^1$ estimates}
\begin{proposition}
\label{PropEstH1-NV}
  Let $T_{\text{ode}}^*$ be as in Proposition \ref{PropEstL2-NV} and $G$ be given by (\ref{ExG}). Set $G_0=\frac{1}{J_0}(\mu v_0'-R\varrho_0\vartheta_0)$. Then, there is a positive constant $\mathcal E_1$,
  depending only on $\mu, \kappa, c_v, R, \bar\varrho, \underline J, \bar J, \|\varrho_0\|_{W^{1,\infty}}, \|\sqrt{\varrho_0}v_0\|_2, \|\sqrt{\varrho_0}E_0\|_2$, $\|G_0\|_2$, and $\Big\|
  \varrho_0^{\tfrac32}\vartheta_0'\Big\|_2^2$, such that
  \begin{eqnarray*}
\sup_{0\leq t\leq T_{\text{ode}}^*}\|G\|_2^2+\int_0^{T_\text{ode}^*}
    \Bigg(\Bigg\|\frac{G_y}{\sqrt{\varrho_0}}\Bigg\|_2^2
    +\|G\|_\infty^2\Bigg)dt &\leq&\mathcal E_1,\\
\sup_{0\leq t\leq T_{\text{ode}}^*}\Big\|\varrho_0^\frac32
    \vartheta_y\Big\|_2^2+\int_0^{T_\text{ode}^*}
    \Bigg(\|\varrho_0^2\vartheta_t\|_2^2+
    \Bigg\|\varrho_0\bigg(\frac{\vartheta_y}
    {J}\bigg)_y\Bigg\|_2^2+\|v_y\|_\infty^2\Bigg)dt& \leq&\mathcal E_1.
  \end{eqnarray*}
\end{proposition}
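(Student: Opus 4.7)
The plan is to carry out two coupled energy estimates, one for the effective viscous flux $G$ based on its evolution equation (\ref{EqG}) and one for $\varrho_0^{3/2}\vartheta_y$ based on (\ref{EqTheta}), and then close the system by invoking Grönwall and the $L^2$-bounds of Proposition \ref{PropEstL2-NV}. Throughout, $J$ has the uniform positive upper and lower bounds from the definition of $T_*$, so any weight involving $J$ is harmless.

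For the bound on $G$, I would test (\ref{EqG}) with $JG$. Using $J_t=v_y$ to handle $\int JGG_t\,dy=\tfrac12\tfrac{d}{dt}\|\sqrt{J}G\|_2^2-\tfrac12\int v_yG^2\,dy$, and integrating by parts on the viscous term, one arrives at
\[
\tfrac12\tfrac{d}{dt}\|\sqrt{J}G\|_2^2+\mu\Big\|\tfrac{G_y}{\sqrt{\varrho_0}}\Big\|_2^2
=-\big(\gamma-\tfrac12\big)\int v_yG^2\,dy+\kappa(\gamma-1)\int \tfrac{G_y\vartheta_y}{J}\,dy.
\]
The second term on the right is bounded by $\varepsilon\|G_y/\sqrt{\varrho_0}\|_2^2+C\|\vartheta_y\|_2^2$ (using $\varrho_0\le\bar\varrho$), while the first term is controlled by $C\|v_y\|_2\|G\|_2^{3/2}\|G_y\|_2^{1/2}\le\varepsilon\|G_y/\sqrt{\varrho_0}\|_2^2+C\|v_y\|_2^{4/3}\|G\|_2^2$ via the interpolation $\|G\|_\infty^2\le 2\|G\|_2\|G_y\|_2$. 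Since Proposition \ref{PropEstL2-NV} gives $\int_0^{T_{\text{ode}}^*}(\|v_y\|_2^2+\|\vartheta_y\|_2^2)\,dt\le\mathcal E_0$, Grönwall's inequality with initial value $\|G_0\|_2$ yields the $L^\infty L^2$ bound on $G$ and the $L^2L^2$ bound on $G_y/\sqrt{\varrho_0}$; the $L^2L^\infty$ bound on $G$ follows from the same interpolation, and then $\|v_y\|_\infty\le C(\|G\|_\infty+\|\varrho_0\vartheta\|_\infty)$ via the definition of $G$ gives the $L^2L^\infty$ bound on $v_y$ (recalling that $\|\varrho_0\vartheta\|_\infty\in L^2(0,T_{\text{ode}}^*)$ by Proposition \ref{PropEstL2-NV}).

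For the temperature estimates, I would rewrite (\ref{EqTheta}) as $c_v\varrho_0\vartheta_t=\kappa(\vartheta_y/J)_y+v_yG$ and test with $\varrho_0^3\vartheta_t$. An integration by parts, using $J_t=v_y$ to handle $\partial_t(1/J)=-v_y/J^2$, yields
\[
c_v\|\varrho_0^2\vartheta_t\|_2^2+\tfrac{\kappa}{2}\tfrac{d}{dt}\!\int\!\tfrac{\varrho_0^3\vartheta_y^2}{J}\,dy
=-3\kappa\!\int\!\tfrac{\varrho_0^2\varrho_0'\vartheta_t\vartheta_y}{J}\,dy+\tfrac{\kappa}{2}\!\int\!\tfrac{\varrho_0^3 v_y\vartheta_y^2}{J^2}\,dy+\!\int\!\varrho_0^3 v_yG\vartheta_t\,dy.
\]
The first and third right-hand terms are absorbed by $\frac{c_v}{4}\|\varrho_0^2\vartheta_t\|_2^2$ at the cost of $C\|\varrho_0^{3/2}\vartheta_y\|_2^2$ and $C\|v_y\|_\infty^2\|G\|_2^2$, respectively; the middle term is $\le C\|v_y\|_\infty\|\varrho_0^{3/2}\vartheta_y\|_2^2$. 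Invoking the integrability of $\|v_y\|_\infty^2$ just established, Grönwall's lemma yields the $L^\infty L^2$ bound on $\varrho_0^{3/2}\vartheta_y$ and the $L^2L^2$ bound on $\varrho_0^2\vartheta_t$. The bound on $\varrho_0(\vartheta_y/J)_y$ then comes for free by multiplying the temperature equation through by $\varrho_0$ and estimating each remaining term via the already-established bounds on $\varrho_0^2\vartheta_t$, $\|\varrho_0\vartheta\|_\infty$, and $\|v_y\|_\infty$.

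The main obstacle is the coupling through the source term $v_yG$ in the temperature equation together with the $\int v_yG^2\,dy$ term appearing in the $G$ estimate: one must first secure the $L^2L^\infty$ control of $v_y$ from the $G$ estimate alone (using only the a priori $L^2L^2$ bound on $v_y$ from Proposition \ref{PropEstL2-NV}), and only then can this $L^\infty$-bound be fed into the $\vartheta$-estimate. Verifying that this bootstrap closes without circular dependencies, and that the singular weight $\varrho_0^3$ in the test function is compatible with the IBP (producing a Grönwall-integrable factor rather than an uncontrolled $\|\vartheta_y\|$ term), is the delicate part; the matching of the initial data term $\|\varrho_0^{3/2}\vartheta_0'\|_2$ to this choice of weight explains why it appears in the hypothesis (H0).
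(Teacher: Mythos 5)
Your proposal is essentially the paper's proof: test (\ref{EqG}) with $JG$ and close by Gr\"onwall using Proposition \ref{PropEstL2-NV}; upgrade to $L^2_tL^\infty_y$ control of $v_y$ via $v_y=\frac1\mu(JG+R\varrho_0\vartheta)$ together with $\|\varrho_0\vartheta\|_\infty\in L^2_t$; then run a weighted energy estimate on $\vartheta_y$. Whether one tests (\ref{EqTheta}) with $\varrho_0^3\vartheta_t$ as you do, or squares $c_v\varrho_0\vartheta_t-\kappa(\vartheta_y/J)_y=v_yG$ against $\varrho_0^2$ and expands as the paper does, is an equivalent reorganization. Your treatment of $\int v_y G^2\,dy$ in the $G$-estimate (Young with exponent $4/3$, integrable by Proposition \ref{PropEstL2-NV} since $T_{\text{ode}}^*\le1$) is a minor variant of the paper's device of integrating by parts once more to $\int vGG_y\,dy$; both close.

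One local slip worth flagging: when you absorb $-3\kappa\int\varrho_0^2\varrho_0'\vartheta_t\vartheta_y/J\,dy$ into $\tfrac{c_v}{4}\|\varrho_0^2\vartheta_t\|_2^2$, the residual is $C\|\varrho_0'\vartheta_y/J\|_2^2\le C\|\varrho_0'\|_\infty^2\|\vartheta_y\|_2^2$, not $C\|\varrho_0^{3/2}\vartheta_y\|_2^2$ as you state. The latter would require $|\varrho_0'|\lesssim\varrho_0^{3/2}$, which is a slow-decay hypothesis not in force in Section \ref{Secloc-nv} (it only appears later as (\ref{HSLOW})). This only changes the bookkeeping: the term lands in the Gr\"onwall forcing rather than the Gr\"onwall factor, and its time integral is controlled by $\int_0^{T_{\text{ode}}^*}\|\vartheta_y\|_2^2\,dt\le\mathcal E_0$, so the argument still closes. (There is also a harmless sign slip on the $\frac{\kappa}{2}\int\varrho_0^3 v_y\vartheta_y^2/J^2\,dy$ term, which you estimate in absolute value anyway.)
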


\begin{proof}
We start with the estimate on $G$. Testing (\ref{EqG}) with $JG$ yields
\begin{eqnarray*}
  \frac12\frac{d}{dt}\|\sqrt JG\|_2^2+\mu\left\|\frac{G_y}{\sqrt{\varrho_0}}\right\|_2^2
  &=&(2\gamma-1)\int_\mathbb RvGG_ydy-\kappa(\gamma-1)\int_\mathbb R
  \frac{\vartheta_yG_y}{J}dy\\
  &\leq&\frac\mu4\left\|\frac{G_y}{\sqrt{\varrho_0}}\right\|_2^2+
  C\left(\|\sqrt{\varrho_0}v\|_2^2\|G\|_\infty^2+\|\vartheta_y\|_2^2\right).
\end{eqnarray*}
It follows from Proposition \ref{PropEstL2-NV} and the inequality above that
\begin{equation}\label{2.7-1}
  \frac{d}{dt}\|\sqrt JG\|_2^2+\frac{3\mu}{2}\left\|\frac{G_y}{\sqrt{\varrho_0}}\right\|_2^2
  \leq C\left(\|G\|_\infty^2+\|\vartheta_y\|_2^2\right).
\end{equation}
Note that
\begin{equation}
  \|G\|_\infty^2 \leq \int_\mathbb R|\partial_yG^2|dy\leq C\|G\|_2\left\|\frac{G_y}{\sqrt{\varrho_0}}\right\|_2
   \leq \varepsilon\left\|\frac{G_y}{\sqrt{\varrho_0}}\right\|_2^2+
  C_\varepsilon\|\sqrt JG\|_2^2,\label{Re2}
\end{equation}
for any positive $\varepsilon$. Choosing $\varepsilon$ sufficient small, one gets from (\ref{2.7-1}) and (\ref{Re2}) that
\begin{equation*}
  \frac{d}{dt}\|\sqrt JG\|_2^2 +\mu\left\|\frac{G_y}{\sqrt{\varrho_0}}\right\|_2^2
  \leq C\left(\|\sqrt JG\|_2^2+\|\vartheta_y\|_2^2\right).
\end{equation*}
Consequently, Proposition \ref{PropEstL2-NV} and the Gronwall inequality show that
\begin{equation}
  \label{2.8}
  \sup_{0\leq t\leq T_{\text{ode}}^*}\|G\|_2^2+\int_0^{T_{\text{ode}}^*}
  \left(\left\|\frac{G_y}{\sqrt{\varrho_0}}\right\|_2^2+\|G\|_\infty^2\right)dt\leq \mathcal E_1'.
\end{equation}

Next we estimate $\vartheta$. It follows from (\ref{EqTheta}) that
\begin{equation}
  \int_\mathbb R\varrho_0^2
  \left(c_v\varrho_0\vartheta_t-\kappa\left(\frac{\vartheta_y}{J}\right)_y
  \right)^2
  =\int_\mathbb R\varrho_0^2v_y^2 G^2dy. \label{2.9-1}
\end{equation}
Using (\ref{EqJ}),
one deduces
\begin{eqnarray*}
  -2\int_\mathbb R\varrho_0^3\vartheta_t \left(\frac{\vartheta_y}{J}\right)_y dy
   = \frac{d}{dt}\left\|\sqrt{\frac{\varrho_0^3}{J}} \vartheta_y\right\|_2^2
  +\int_\mathbb R\left(v_y\frac{\varrho_0^3}{J^2}\vartheta_y^2
  +6\varrho_0^2\varrho_0'\frac{\vartheta_y\vartheta_t}{J}\right)dy\\
   \geq\ \  \frac{d}{dt}\left\|\sqrt{\frac{\varrho_0^3}{J}}\vartheta_y \right\|_2^2-\frac{c_v}{2\kappa}\|\varrho_0^2\vartheta_t\|_2^2-\left\|\frac{v_y}{J}\right\|_\infty \left\|\sqrt{\frac{\varrho_0^3}{J}}\vartheta_y\right\|_2^2
-\frac{18\kappa}{c_v}  \left\|\frac{\varrho_0'\vartheta_y}{J}\right\|_2^2.
\end{eqnarray*}
This, together with (\ref{2.8}) and (\ref{2.9-1}), yields
\begin{eqnarray}
  &&c_v\kappa\frac{d}{dt}\left\|\sqrt{\frac{\varrho_0^3}{J}}\vartheta_y \right\|_2^2+\frac{c_v^2}{2}\|\varrho_0^2\vartheta_t\|_2^2+
  \kappa^2\left\|\varrho_0\left(\frac{\vartheta_y}{J}\right)_y \right\|_2^2 \nonumber\\
  &\leq& c_v\kappa \left\|\frac{v_y}{J}\right\|_\infty \left\|\sqrt{\frac{\varrho_0^3}{J}}\vartheta_y\right\|_2^2+
  18\kappa^2
  \left\|\frac{\varrho_0'\vartheta_y}{J}\right\|_2^2+\int_\mathbb R\varrho_0^2v_y^2 G^2dy\nonumber\\
  &\leq& C\left[(1+\|v_y\|_\infty^2)\left(\left\|\sqrt{\frac{\varrho_0^3}{J}}
  \vartheta_y\right\|_2^2+\mathcal E_1'\right)
  +\|\vartheta_y\|_2^2\right]. \label{2.9-2}
\end{eqnarray}
Since $v_y=\frac1\mu(JG+R\varrho_0\vartheta)$, by Proposition \ref{PropEstL2-NV} and (\ref{2.8}),
one has
\begin{eqnarray*}
  \int_0^{T_{\text{ode}}^*}\|v_y\|_\infty^2 dt&\leq&C\int_0^{T_{\text{ode}}^*}(\|G\|_\infty^2 +\|\varrho_0\vartheta\|_\infty^2)dt\leq C(\mathcal E_0+\mathcal E_1').
\end{eqnarray*}
This, together with (\ref{2.9-2}) and Proposition \ref{PropEstL2-NV}, yields
\begin{eqnarray*}
  \sup_{0\leq t\leq T_{\text{ode}}^*}\left\|\sqrt{\varrho_0^3}\vartheta_y\right\|_2^2
  +\int_0^{T_{\text{ode}}^*}\left(\|\varrho_0^2\vartheta_t\|_2^2+
  \left\|\varrho_0\left(\frac{\vartheta_y}{J}\right)_y\right\|_2^2
  \right)dt\\
  \leq\ \  e^{C(1+\mathcal E_0+\mathcal E_1')}\left(\left\|
  \sqrt{\frac{\varrho_0^3}{J_0}}\vartheta_0'\right\|_2^2+\mathcal E_1'
  +\mathcal E_0\right)=:\mathcal E_1''.
\end{eqnarray*}
The conclusion follows by setting $\mathcal E_1=\max\{\mathcal E_1', \mathcal E_1''\}$.
\end{proof}

\begin{proposition}
  \label{PropEstH1'-NV}
  Let $T_{\text{ode}}^*$ be as in Proposition \ref{PropEstL2-NV}. Then, there is a positive constant $\mathcal E_2$
  depending only on $\mu, \kappa, c_v, R, \bar\varrho, \underline J, \bar J, \|\varrho_0\|_{W^{1,\infty}}, \|\sqrt{\varrho_0}v_0\|_2, \|\sqrt{\varrho_0}E_0\|_2$, $\|G_0\|_2$, $\left\|
  \varrho_0^\frac32\vartheta_0'\right\|_2^2$, and $\|\sqrt{\varrho_0}J_0'\|_2^2$, such that
  \begin{eqnarray*}
    \sup_{0\leq t\leq T_{\text{ode}}^*}
    \|v_y\|_2^2
    +\int_0^{T_{\text{ode}}^*}
    (\|\sqrt{\varrho_0}v_t\|_2^2+\|\sqrt{\varrho_0}v_{yy}\|_2^2) dt\leq
    \mathcal E_2,\\
    \sup_{0\leq t\leq T_{\text{ode}}^*}
    (\|\sqrt{\varrho_0}J_y\|_2^2+\|J_t\|_2^2)
    +\int_0^{T_{\text{ode}}^*}
    \|\sqrt{\varrho_0}J_{yt}\|_2^2 dt\leq
    \mathcal E_2.
  \end{eqnarray*}
\end{proposition}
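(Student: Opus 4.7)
The plan is to reduce everything to quantities already controlled by Propositions \ref{PropEstL2-NV} and \ref{PropEstH1-NV}, by exploiting the three identities
$$
\mu v_y = JG + R\varrho_0\vartheta, \qquad \varrho_0 v_t = G_y, \qquad J_{yt}=v_{yy},
$$
the first being the definition (\ref{ExG}) of $G$ solved for $v_y$, the second obtained by rewriting (\ref{EqV}) as $\varrho_0 v_t = (\mu v_y/J-R\varrho_0\vartheta/J)_y = G_y$, and the third following from $J_t=v_y$ in (\ref{EqJ}). Differentiating the first identity in $y$ yields
$$
\mu v_{yy} = J_y G + J G_y + R\varrho_0'\vartheta + R\varrho_0\vartheta_y,
$$
which will be the workhorse for the $J_y$ and $v_{yy}$ estimates.

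First, the bounds on $\|v_y\|_2$, $\|J_t\|_2=\|v_y\|_2$, and $\int_0^{T_{\text{ode}}^*}\|\sqrt{\varrho_0}v_t\|_2^2\,dt$ follow immediately: from $\mu v_y = JG + R\varrho_0\vartheta$ together with the uniform bounds on $J$ and $\varrho_0$ one has $\|v_y\|_2 \leq C(\|G\|_2+\|\sqrt{\varrho_0}\vartheta\|_2)$, which is controlled on $[0,T_{\text{ode}}^*]$; while $\varrho_0 v_t = G_y$ gives $\|\sqrt{\varrho_0}v_t\|_2 = \|G_y/\sqrt{\varrho_0}\|_2\in L^2_t$ by Proposition \ref{PropEstH1-NV}.

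For $\|\sqrt{\varrho_0}J_y\|_2^2$, I would multiply the expression for $\mu v_{yy}$ by $\varrho_0 J_y$, use $J_{yt}=v_{yy}$, and apply Cauchy–Schwarz and Young together with the pointwise weight comparison $\|\sqrt{\varrho_0}G_y\|_2 \leq \bar\varrho\,\|G_y/\sqrt{\varrho_0}\|_2$ (valid since $\varrho_0\leq\bar\varrho$) to obtain a differential inequality of the form
$$
\frac{d}{dt}\|\sqrt{\varrho_0}J_y\|_2^2 \leq C(1+\|G\|_\infty)\|\sqrt{\varrho_0}J_y\|_2^2 + C\bigl(\|G_y/\sqrt{\varrho_0}\|_2^2+\|\sqrt{\varrho_0}\vartheta\|_2^2+\|\varrho_0^{3/2}\vartheta_y\|_2^2\bigr).
$$
Since $\|G\|_\infty\in L^2_t\subset L^1_t$ on the finite interval by Proposition \ref{PropEstH1-NV}, Gronwall's lemma closes the bound on $\sup_t\|\sqrt{\varrho_0}J_y\|_2^2$ in terms of the initial datum $\|\sqrt{\varrho_0}J_0'\|_2^2$ and the previous constants. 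Finally, $\|\sqrt{\varrho_0}v_{yy}\|_2 = \|\sqrt{\varrho_0}J_{yt}\|_2$ is estimated in $L^2_t$ by plugging the newly obtained $\sup$-bound on $\|\sqrt{\varrho_0}J_y\|_2^2$ together with the $L^2_t$-bound on $\|G\|_\infty^2$ and the other known controls into the square of the explicit formula
$$
\|\sqrt{\varrho_0}v_{yy}\|_2 \leq C\bigl(\|G\|_\infty\|\sqrt{\varrho_0}J_y\|_2 + \|G_y/\sqrt{\varrho_0}\|_2 + \|\sqrt{\varrho_0}\vartheta\|_2 + \|\varrho_0^{3/2}\vartheta_y\|_2\bigr).
$$

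The main obstacle is the apparent circular dependence between the $J_y$ and $v_{yy}$ estimates: the time evolution of $\|\sqrt{\varrho_0}J_y\|_2^2$ involves $v_{yy}$, while the $L^2_t$-bound on $\|\sqrt{\varrho_0}v_{yy}\|_2$ involves $\|\sqrt{\varrho_0}J_y\|_2$. The resolution is that the algebraic expression for $v_{yy}$ in terms of $J_y$, $G$, $G_y$, $\vartheta$ and $\vartheta_y$ lets one substitute directly inside the $J_y$ identity, producing a self-contained scalar Gronwall inequality whose only nontrivial Gronwall coefficient, $\|G\|_\infty$, lies merely in $L^2_t$ — but is, crucially, integrable on the finite interval $[0,T_{\text{ode}}^*]$.
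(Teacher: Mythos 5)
Your proposal is correct and follows essentially the same route as the paper: using the algebraic identities $\mu v_y=JG+R\varrho_0\vartheta$, $\sqrt{\varrho_0}v_t=G_y/\sqrt{\varrho_0}$, and $\sqrt{\varrho_0}J_{yt}=\sqrt{\varrho_0}v_{yy}=\frac{\sqrt{\varrho_0}}{\mu}(JG_y+J_yG+R\varrho_0'\vartheta+R\varrho_0\vartheta_y)$, then closing the $\|\sqrt{\varrho_0}J_y\|_2^2$ bound by multiplying this last identity by $\sqrt{\varrho_0}J_y$ (equivalently the $\mu v_{yy}$ expression by $\varrho_0 J_y$) and applying Gronwall with coefficient involving $\|G\|_\infty$, which Proposition \ref{PropEstH1-NV} makes integrable on $[0,T_{\text{ode}}^*]$. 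The only cosmetic difference is that the paper absorbs the cross term via Young to obtain coefficient $C(1+\|G\|_\infty^2)$ rather than $C(1+\|G\|_\infty)$, but both are in $L^1_t$ and the argument is identical in substance.
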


\begin{proof}
Since $J_t=v_y$ and $J_{yt}=v_{yy}$, it suffices to prove the first conclusion and the estimate $\sup_{0\leq t\leq
T_{\text{ode}}^*}\|\sqrt{\varrho_0}J_y\|_2^2$. Besides, since $\sqrt{\varrho_0}v_t=\frac{G_y}{\sqrt{\varrho_0}}$,
the estimate for $\int_0^{T_{\text{ode}}^*}\|\sqrt{\varrho_0}v_t\|_2^2dt$ follows from Proposition
\ref{PropEstH1-NV} directly.

Note that
\begin{equation*}
  \sqrt{\varrho_0}J_{yt}=\sqrt{\varrho_0}v_{yy}
=\frac{\sqrt{\varrho_0}}{\mu}(JG_y+J_yG+R\varrho_0'
\vartheta+R\varrho_0\vartheta_y).
\end{equation*}
Multiplying the equation before by $\sqrt{\varrho_0}J_y$ yields
\begin{eqnarray*}
  \frac12\frac{d}{dt}\|\sqrt{\varrho_0}J_y\|_2^2&\leq&\frac{\|G\|_\infty}
  {\mu}\|\sqrt{\varrho_0}J_y\|_2^2+\frac{\|\sqrt{\varrho_0}J_y\|_2}
  {\mu}\|J\sqrt{\varrho_0}G_y +R\varrho_0'\sqrt{\varrho_0}\vartheta+R\varrho_0^{\frac32}
  \vartheta_y\|_2\\
  &\leq&C(\|G\|_\infty^2+1)\|\sqrt{\varrho_0}J_y\|_2^2+C(\|G_y\|_2^2+
  \|\sqrt{\varrho_0}\vartheta\|_2^2+\|\vartheta_y\|_2^2),
\end{eqnarray*}
which, together with Propositions \ref{PropEstL2-NV}--\ref{PropEstH1-NV}, yields
\begin{eqnarray}
  \sup_{0\leq t\leq T_{\text{ode}}^*}\|\sqrt{\varrho_0}J_y\|_2^2
  &\leq& e^{C\int_0^{T_{\text{ode}}^*}(1+\|G\|_\infty^2)dt}
  \left[\|\sqrt{\varrho_0}J_0'\|_2^2+C\int_0^{T_{\text{ode}}^*}\|(G_y,\sqrt{\varrho_0}\vartheta,\vartheta_y)\|_2^2dt\right]\nonumber\\
  &\leq&Ce^{C\mathcal E_1}(\|\sqrt{\varrho_0}J_0'\|_2^2+C\mathcal E_0+C\mathcal E_1)
  =:\mathcal E_2'.\label{2.12}
\end{eqnarray}

It follows from direct calculations, (\ref{2.12}), and
Propositions \ref{PropEstL2-NV}--\ref{PropEstH1-NV} that
\begin{eqnarray*}
  &&\sup_{0\leq t\leq T_{\text{ode}}^*}\|v_y\|_2^2+\int_0^{T_{\text{ode}}^*} \|\sqrt{\varrho_0}
  v_{yy}\|_2^2 dt\\
  &=&\sup_{0\leq t\leq T_{\text{ode}}^*}\left\|\frac1\mu
  (JG+R\varrho_0\vartheta)\right\|_2^2 +\int_0^{T_{\text{ode}}^*}
  \left\|\frac{\sqrt{\varrho_0}}{\mu}(JG_y+J_yG+R\varrho_0'\vartheta
  +R\varrho_0\vartheta_y)\right\|_2^2dt \\
  &\leq&C\sup_{0\leq t\leq T_{\text{ode}}^*}\|(G,\sqrt{\varrho_0}\vartheta)\|_2^2 +C\int_0^{T_{\text{ode}}^*}(\|(G_y,
  \sqrt{\varrho_0}\vartheta,\vartheta_y)\|_2^2+\|\sqrt{\varrho_0}J_y\|_2^2
  \|G\|_\infty^2) dt\\
  &\leq& C(\mathcal E_0+\mathcal E_1+\mathcal E_1\mathcal E_2')=:\mathcal E_2''.
\end{eqnarray*}
Setting $\mathcal E_2=
\mathcal E_2'+\mathcal E_2''$ gives the desired conclusion.
\end{proof}

\subsection{Estimate on the life span and a summary of a priori estimates}
\begin{proposition}
\label{PropEstT}
  Let $T_{\text{ode}}$ and $T_{\text{ode}}^*$ be as in Proposition \ref{PropEstL2-NV}, and $\mathcal E_1$ in Proposition \ref{PropEstH1-NV}. Then, $
    T_{\text{ode}}^*=
    \min\left\{
    T_{\text{ode}},1,\frac{\underline J^2}{4\mathcal E_1}\right\}.$
\end{proposition}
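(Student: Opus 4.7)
The plan is to convert the integrated control $\int_0^{T_{\text{ode}}^*}\|v_y\|_\infty^2\,dt\leq \mathcal E_1$ provided by Proposition \ref{PropEstH1-NV} into a pointwise oscillation bound on $J$, and then run a continuation argument against the maximality of $T_*$ in (\ref{T_*}). From $J_t=v_y$ with $J|_{t=0}=J_0$, the fundamental theorem of calculus and Cauchy--Schwarz give, for every $t\in[0,T_{\text{ode}}^*]$,
\[
\|J(\cdot,t)-J_0\|_\infty\leq\int_0^t\|v_y(\cdot,\tau)\|_\infty\,d\tau\leq \sqrt{t}\,\Big(\int_0^{T_{\text{ode}}^*}\|v_y\|_\infty^2\,d\tau\Big)^{1/2}\leq \sqrt{t\,\mathcal E_1}.
\]

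I would then argue by contradiction. Suppose that $T_*<\widetilde T:=\min\{T_{\text{ode}},1,\frac{\underline J^2}{4\mathcal E_1}\}$. Since in particular $T_*<T_{\text{ode}}$ and $T_*<1$, the definition $T_{\text{ode}}^*:=\min\{T_*,T_{\text{ode}},1\}$ collapses to $T_{\text{ode}}^*=T_*$, so Proposition \ref{PropEstH1-NV} is in force on the entirety of $[0,T_*]$. Taking $t=T_*<\frac{\underline J^2}{4\mathcal E_1}$ in the display above,
\[
\|J(\cdot,T_*)-J_0\|_\infty\leq \sqrt{T_*\,\mathcal E_1}<\frac{\underline J}{2},
\]
which, combined with $\underline J\leq J_0\leq\bar J$ (hence $\underline J\leq\bar J$), yields $\underline J/2<J(y,T_*)<3\bar J/2$ pointwise, strictly interior to the interval $[\underline J/3,3\bar J]$. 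By the continuity of $t\mapsto J(\cdot,t)$ with values in $L^\infty(\mathbb R)$ guaranteed by the regularity of Proposition \ref{PropLocNovacuum}, this strict inclusion persists on $[0,T_*+\delta]$ for some $\delta>0$, contradicting the maximality of $T_*$ in (\ref{T_*}).

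Consequently $T_*\geq \widetilde T$, and since $\widetilde T\leq T_{\text{ode}}$ and $\widetilde T\leq 1$,
\[
T_{\text{ode}}^*=\min\{T_*,T_{\text{ode}},1\}\geq\min\{\widetilde T,T_{\text{ode}},1\}=\widetilde T,
\]
which is the substantive content of the proposition. The only delicate point is the apparent circularity between the constant $\mathcal E_1$ and the interval $[0,T_{\text{ode}}^*]$ on which it is valid: the contradiction hypothesis is calibrated precisely so that $T_{\text{ode}}^*=T_*$, thereby making the bound of Proposition \ref{PropEstH1-NV} available on all of $[0,T_*]$ and closing the argument.
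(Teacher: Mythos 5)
Your proof follows the same mechanism as the paper's: you convert the bound $\int_0^{T_{\text{ode}}^*}\|v_y\|_\infty^2\,dt\leq\mathcal E_1$ from Proposition \ref{PropEstH1-NV} into the pointwise oscillation estimate $\|J(\cdot,t)-J_0\|_\infty\leq\sqrt{t\,\mathcal E_1}$ via Cauchy--Schwarz and $J_t=v_y$, then argue that $J$ stays strictly inside $[\underline J/3,3\bar J]$ up to time comparable to $\underline J^2/(4\mathcal E_1)$, so the definition (\ref{T_*}) of $T_*$ can be pushed further. You phrase this as a contradiction against the maximality of $T_*$, while the paper argues directly by introducing $T_{**}$ and finding $T_{**}^+\in(T_{**},T_{\text{max}})$ with $T_{**}<T_{**}^+\leq T_*$; this is a cosmetic difference. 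Your remark on the circularity between $\mathcal E_1$ and the interval on which it is valid is correct, and your observation that the lower bound $T_{\text{ode}}^*\geq\min\{T_{\text{ode}},1,\underline J^2/(4\mathcal E_1)\}$ is the operative conclusion is also fair.

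There is, however, a genuine gap in your continuation step. You invoke continuity of $t\mapsto J(\cdot,t)$ to conclude the strict inclusion persists on $[0,T_*+\delta]$. For this the solution must actually exist past $T_*$, i.e.\ you need $T_*<T_{\text{max}}$ under the contradiction hypothesis, and your argument does not establish this. This is precisely the first step of the paper's proof: since $\varrho_0\geq\underline\varrho>0$ in this section, the weighted a priori bounds of Propositions \ref{PropEstL2-NV}--\ref{PropEstH1'-NV} give control on $\|J_y\|_2$, $\|v\|_{H^1}$, $\|\vartheta\|_{H^1}$ on $[0,T_{\text{ode}}^*]$, and together with $\tfrac{\underline J}{3}\leq J\leq3\bar J$ from the definition of $T_*$ this bounds every quantity in the blowup criterion (\ref{Tm}), forcing $T_{\text{ode}}^*<T_{\text{max}}$. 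Under your hypothesis $T_{\text{ode}}^*=T_*$, so this yields $T_*<T_{\text{max}}$ and rescues the continuation. Without this verification, the scenario $T_*=T_{\text{max}}<\widetilde T$ (finite-time blowup before reaching $\widetilde T$) is not excluded and no contradiction is obtained; you should insert this step before the continuation argument.
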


\begin{proof}
Note that $\varrho_0\geq\underline\varrho>0$. Propositions \ref{PropEstL2-NV}--\ref{PropEstH1'-NV} imply
\begin{equation*}
  \sup_{0\leq t\leq T_{\text{ode}}^*}(\|J_y\|_2+\|v\|_{H^1}+\|\vartheta\|_{H^1})<\infty.
\end{equation*}
It follows from the definition of $T_*$ and $T_{\text{ode}}^*=\min\{T_*,T_{\text{ode}}, 1\}$ that
$$
\sup_{0\leq t\leq T_{\text{ode}}^*}\left((\inf_{y\in\mathbb R}J)^{-1}+
\sup_{y\in\mathbb R}J\right)<\infty.
$$
Thus, $T_{\text{ode}}^*<T_{\text{max}}$.

Then, (\ref{EqJ}) and Proposition \ref{PropEstH1-NV} imply
\begin{eqnarray*}
 & J=J_0+\int_0^tv_yd\tau \geq \underline J-\left(\int_0^t\|v_y\|_\infty^2d\tau\right)^{\frac12}t^{\frac12}
   \geq \underline J-\mathcal E_1^{\frac12}t^{\frac12}
  \geq\frac{\underline J}{2},\\
 & J \leq \bar J+t^{\frac12}\left(\int_0^t\|v_y\|_\infty^2d\tau\right)^{\frac12}
  \leq\bar J+\mathcal E_1^{\frac12}t^{\frac12}\leq\frac{3\bar J}{2},
\end{eqnarray*}
for all $t\leq T_{**}$, where
$$
T_{**}:=\min\left\{T_{\text{ode}}^*,\frac{\underline J^2}{4\mathcal E_1},\frac{\bar J^2}{4\mathcal E_1}\right\}=\min\left\{T_*,
T_{\text{ode}},1,\frac{\underline J^2}{4\mathcal E_1}\right\},$$
with $T_{\text{ode}}$ given in Proposition \ref{PropEstL2-NV}.

Note that
$\frac{\underline J}{3}<\frac{\underline J}{2}
\leq J \leq\frac{3\bar J}{2}<3\bar J$ on $\mathbb R
\times[0,T_{**}],$ $H^1(\mathbb R)\hookrightarrow C(\overline{\mathbb R})$, $J-J_0\in C([0,T_\text{max});H^1(\mathbb R))$, and $T_{**}\leq T_{\text{ode}}^*<T_{\text{max}}$.
There is a positive constant $T_{**}^+\in(T_{**}, T_{\text{max}})$, such that
$\frac{\underline J}{3}\leq J\leq3\bar J$ on $\mathbb R\times[0,T_{**}^+].$
Thanks to this and the definition of $T_*$ in (\ref{T_*}), one has
$$
\min\left\{T_*,
T_{\text{ode}},1,\frac{\underline J^2}{4\mathcal E_1}\right\}=T_{**}<T_{**}^+\leq T_*,
$$
which implies $T_*>\min\left\{
T_{\text{ode}},1,\frac{\underline J^2}{4\mathcal E_1}\right\}$. Thus,
$$
T_{\text{ode}}^*=\min\{T_{\text{ode}},1,T_*\}=
\min\left\{
T_{\text{ode}},1,\frac{\underline J^2}{4\mathcal E_1}\right\},
$$
which yields the desired conclusion.
\end{proof}

As a consequence of Propositions \ref{PropLocNovacuum}--\ref{PropEstT}, we have the following:

\begin{theorem}
  \label{ThmLocNV}
  Under the assumption (\ref{ass-nv}), there are two positive constants $\mathcal T$ and $\mathcal E$ depending only on $c_v, R,\mu,\kappa, \bar\varrho, \underline J, \bar J, \|\varrho_0'\|_{\infty}, \|\sqrt{\varrho_0}v_0\|_2, \|\sqrt{\varrho_0}v_0^2\|_2, \|\sqrt{\varrho_0}\vartheta_0\|_2$, $\|v_0'\|_2$, $\|\sqrt{\varrho_0}J_0'\|_2^2$, and $\left\|
  \varrho_0^\frac32\vartheta_0'\right\|_2^2$, but independent of $\underline\varrho$, such that the problem (\ref{EqJ})--(\ref{IC}) has
a unique solution $(J, v,\vartheta)$ on $\mathbb R\times[0,\mathcal T]$, satisfying
\begin{eqnarray*}
 \frac{\underline J}{2}\leq J\leq2\bar J,\quad\vartheta\geq0,\quad\mbox{ on }\mathbb R\times[0,\mathcal T],&&\\
  \sup_{0\leq t\leq\mathcal
  T}\|(J_t,\sqrt{\varrho_0}J_y)\|_2^2
  +\int_0^{\mathcal T}\|\sqrt{\varrho_0}J_{yt}\|_2^2 dt&\leq&\mathcal E,\\
  \displaystyle\sup_{0\leq t\leq\mathcal
  T}\|(\sqrt{\varrho_0}v,\sqrt{\varrho_0}v^2,v_y)\|_2^2
  +\int_0^{\mathcal
  T}\|(vv_y,\sqrt{\varrho_0}v_t,\sqrt{\varrho_0}v_{yy})\|_2^2) dt&\leq&\mathcal E,\\
  \displaystyle\sup_{0\leq t\leq\mathcal
  T}\|(\sqrt{\varrho_0}\vartheta,\varrho_0^{\frac32}\vartheta_y)\|_2^2
  +\int_0^{\mathcal T}\left\|\left(\vartheta_y,\varrho_0^2\vartheta_t,
  \varrho_0\left(\frac{\vartheta_y}{J}\right)_y\right)\right\|_2^2dt &\leq&\mathcal E.
\end{eqnarray*}
\end{theorem}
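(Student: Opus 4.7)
The plan is to assemble Theorem \ref{ThmLocNV} directly from Propositions \ref{PropLocNovacuum}--\ref{PropEstT}, whose conclusions already cover every assertion of the theorem provided the constants are tracked carefully. We set
$$
\mathcal T := \min\left\{T_{\text{ode}}, 1, \frac{\underline J^2}{4\mathcal E_1}\right\}
$$
and let $\mathcal E$ be a fixed multiple of $\mathcal E_0 + \mathcal E_1 + \mathcal E_2$, with $T_{\text{ode}}, \mathcal E_0, \mathcal E_1, \mathcal E_2$ as in the respective propositions. Proposition \ref{PropLocNovacuum}, applied iteratively, produces a unique maximal solution on $[0, T_{\text{max}})$ in the sense of (\ref{Tm}); Proposition \ref{PropEstT} then gives $\mathcal T = T_{\text{ode}}^* < T_{\text{max}}$ and, via the argument there based on $J = J_0 + \int_0^t v_y\,d\tau$ together with $\int_0^t \|v_y\|_\infty^2 d\tau \leq \mathcal E_1$, yields $\underline J/2 \leq J \leq 3\bar J/2 \leq 2\bar J$ on $\mathbb R\times[0,\mathcal T]$. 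The nonnegativity of $\vartheta$ is already asserted in Proposition \ref{PropLocNovacuum} and is preserved by the weak maximum principle applied to each iterate of (\ref{L1})--(\ref{L3}). The integral and supremum bounds in the conclusion are then immediate specializations of the estimates of Propositions \ref{PropEstL2-NV}--\ref{PropEstH1'-NV} restricted to $[0,\mathcal T]$.

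The main obstacle---really the only substantive issue---is verifying that $\mathcal T$ and $\mathcal E$ can be chosen independently of the positive lower bound $\underline\varrho$ of $\varrho_0$. Inspecting the three a priori estimate propositions, the constants $T_{\text{ode}}, \mathcal E_0, \mathcal E_1, \mathcal E_2$ depend on $\|\varrho_0\|_{W^{1,\infty}}$ (equivalently $\bar\varrho$ and $\|\varrho_0'\|_\infty$), on $\underline J, \bar J$, and on initial norms in which $\varrho_0$ enters only through $\sqrt{\varrho_0}$-weighted quantities (namely $\|\sqrt{\varrho_0}v_0\|_2$, $\|\sqrt{\varrho_0} E_0\|_2$, $\|\sqrt{\varrho_0} J_0'\|_2$, $\|\varrho_0^{3/2}\vartheta_0'\|_2$), together with $\|G_0\|_2$. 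The first group is controlled by the initial norms listed in the theorem since $\|\sqrt{\varrho_0} E_0\|_2 \leq \tfrac12\|\sqrt{\varrho_0} v_0^2\|_2 + c_v\|\sqrt{\varrho_0}\vartheta_0\|_2$, while for $G_0 = J_0^{-1}(\mu v_0' - R\varrho_0\vartheta_0)$ one has
$$
\|G_0\|_2 \leq \frac{1}{\underline J}\bigl(\mu\|v_0'\|_2 + R\sqrt{\bar\varrho}\,\|\sqrt{\varrho_0}\vartheta_0\|_2\bigr),
$$
using $\|\varrho_0\vartheta_0\|_2^2 \leq \bar\varrho\|\sqrt{\varrho_0}\vartheta_0\|_2^2$. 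Consequently every constant is bounded by the quantities listed in the theorem statement, uniformly in $\underline\varrho$, as required.

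Finally, the local uniqueness furnished by Proposition \ref{PropLocNovacuum} transfers to $[0,\mathcal T]$, and the stated regularity assertions are direct specializations of the regularities in Propositions \ref{PropEstL2-NV}--\ref{PropEstH1'-NV}. No new estimates need to be derived; the theorem is obtained by combining and restating the preceding propositions with the indicated choices of $\mathcal T$ and $\mathcal E$.
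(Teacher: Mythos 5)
The proposal is correct and takes essentially the same approach as the paper, which presents Theorem \ref{ThmLocNV} as a direct consequence of Propositions \ref{PropLocNovacuum}--\ref{PropEstT} without a separate proof. Your careful tracking of the dependences, in particular the bounds $\|\sqrt{\varrho_0}E_0\|_2\leq\frac12\|\sqrt{\varrho_0}v_0^2\|_2+c_v\|\sqrt{\varrho_0}\vartheta_0\|_2$ and $\|G_0\|_2\leq\underline J^{-1}(\mu\|v_0'\|_2+R\sqrt{\bar\varrho}\|\sqrt{\varrho_0}\vartheta_0\|_2)$ showing that $\mathcal E_0,\mathcal E_1,\mathcal E_2$ and $T_{\text{ode}}$ depend only on the listed quantities and not on $\underline\varrho$, supplies exactly the bookkeeping the paper leaves implicit.
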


\section{Local existence in the presence of far field vacuum}
\label{SecLoc-v}
The aim of this section is to establish the local existence of solutions to the problem (\ref{EqJ})--(\ref{IC}), with vacuum at the far field only.

\begin{theorem}
  \label{ThmLocV}
Let $\bar\varrho, \underline J,$ and $\bar J$ be positive constants. Assume that
\begin{equation}\label{ass-v}
  \left\{
    \begin{aligned}
      &0<\varrho_0(y)\leq\bar\varrho,\quad \underline J\leq J_0(y)\leq\bar J, \quad \vartheta_0(y)\geq0, \quad\forall y\in\mathbb R,\\
      &\varrho_0'\in L^\infty(\mathbb
      R), \quad \left(\sqrt{\varrho_0}J_0',\sqrt{\varrho_0}v_0, \sqrt{\varrho_0}v_0^2, v_0', \sqrt{\varrho_0}\vartheta_0, \varrho_0^{\frac32}\vartheta_0'\right)\in L^2(\mathbb R).
    \end{aligned}
  \right.
\end{equation}

Then, there is a positive time $\mathcal T$ depending only on $c_v, R,\mu, \kappa,$ and
\begin{equation*}
  \left\{
\begin{array}{l}
\bar\varrho, \underline J, \bar J, \|\varrho_0'\|_{\infty},\|\sqrt{\varrho_0}v_0\|_2, \|\sqrt{\varrho_0}v_0^2\|_2, \\
\|\sqrt{\varrho_0}\vartheta_0\|_2, \|v_0'\|_2,\|\sqrt{\varrho_0}J_0'\|_2,
\|\varrho_0^\frac32\vartheta_0'\|_2,
\end{array}
\right.
\end{equation*}
such that the problem (\ref{EqJ})--(\ref{IC}), on $\mathbb R\times[0,\mathcal T]$, has
a solution $(J, v,\vartheta)$, satisfying $\frac{\underline J}{2}\leq J\leq2\bar J$ and $\vartheta\geq0$ on $\mathbb R\times[0,\mathcal T],$ and
\begin{eqnarray*}
&J_t,\sqrt{\varrho_0}J_y, \sqrt{\varrho_0}v,\sqrt{\varrho_0}v^2, v_y, \sqrt{\varrho_0}\vartheta, \varrho_0^{\frac32}\vartheta_y\in L^\infty(0,\mathcal T; L^2(\mathbb R)), \\
&\sqrt{\varrho_0}J_{yt}, vv_y,\sqrt{\varrho_0}v_t,\sqrt{\varrho_0}v_{yy}, \vartheta_y, \varrho_0(\frac{\vartheta_y}{J})_y, \varrho_0^2\vartheta_t\in L^2(0,\mathcal T; L^2(\mathbb R)).
\end{eqnarray*}
\end{theorem}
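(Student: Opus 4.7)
My plan is to remove the far-field vacuum by approximation, apply Theorem~\ref{ThmLocNV} (whose local existence time $\mathcal T$ and a priori bound $\mathcal E$ are crucially \emph{independent} of the positive lower bound $\underline\varrho$), and pass to the vanishing limit. For each $\varepsilon\in(0,1)$, set $\varrho_0^\varepsilon:=\varrho_0+\varepsilon$, so that $\varepsilon\leq\varrho_0^\varepsilon\leq\bar\varrho+1$ and $(\varrho_0^\varepsilon)'=\varrho_0'$. Because $v_0$ and $\vartheta_0$ need not lie in $L^2(\mathbb R)$, simply perturbing the density would destroy the uniformity of the weighted $L^2$ norms, so I would also truncate: choose a smooth cutoff $\chi_R$ equal to $1$ on $[-R,R]$, supported in $[-2R,2R]$, with $|\chi_R'|\leq C/R$, and set $v_0^\varepsilon:=v_0\chi_{R(\varepsilon)}$, $\vartheta_0^\varepsilon:=\vartheta_0\chi_{R(\varepsilon)}$, $J_0^\varepsilon:=J_0$. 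The scale $R(\varepsilon)\to\infty$ is chosen slowly enough (e.g.\ $R(\varepsilon)=\varepsilon^{-1/4}$, so $\varepsilon R(\varepsilon)^2\to 0$) that every norm entering Theorem~\ref{ThmLocNV} is controlled uniformly in $\varepsilon$; for instance
\[
\|\sqrt{\varrho_0^\varepsilon}\,v_0^\varepsilon\|_2^2 \;\leq\; \|\sqrt{\varrho_0}\,v_0\|_2^2 + \varepsilon\!\!\int_{|y|\leq 2R}\! v_0^2\,dy,
\]
and the last integral is controlled because $v_0'\in L^2$, combined with $\sqrt{\varrho_0}v_0\in L^2$ and $\varrho_0>0$, forces $v_0$ to grow at most like $\sqrt{|y|}$.

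Analogous checks handle $\sqrt{\varrho_0^\varepsilon}(v_0^\varepsilon)^2$, $\sqrt{\varrho_0^\varepsilon}\vartheta_0^\varepsilon$, $\|(v_0^\varepsilon)'\|_2$, $\|\sqrt{\varrho_0^\varepsilon}J_0'\|_2$, and $\|(\varrho_0^\varepsilon)^{3/2}(\vartheta_0^\varepsilon)'\|_2$; the two derivative norms pick up commutator terms from $\chi_R'$, but these are absorbed by the $1/R$ factor together with the at most polynomial growth of $v_0,\vartheta_0$. Theorem~\ref{ThmLocNV} then yields solutions $(J^\varepsilon,v^\varepsilon,\vartheta^\varepsilon)$ on a common interval $[0,\mathcal T]$, with $\underline J/2\leq J^\varepsilon\leq 2\bar J$, $\vartheta^\varepsilon\geq 0$, and all the estimates of Theorem~\ref{ThmLocNV} uniform in $\varepsilon$.

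To pass to the limit, I would use these uniform bounds to extract a subsequence converging weakly-$*$ in the time-$L^\infty$ spaces listed, and, via the Aubin--Lions lemma applied on bounded $y$-intervals (using the $L^2_tL^2_y$ bounds on $J^\varepsilon_t$, $\sqrt{\varrho_0}v^\varepsilon_t$, and $\varrho_0^2\vartheta^\varepsilon_t$ together with the corresponding spatial regularity), strongly in $L^2_{\mathrm{loc}}$. The nonlinear terms in (\ref{EqJ})--(\ref{EqTheta}) are products of a weakly convergent factor against a locally strongly convergent one, so the equations pass to the limit; lower semicontinuity transfers all the uniform estimates, and $C([0,\mathcal T];L^2_{\mathrm{loc}})$ regularity together with the strong convergence $v_0^\varepsilon\to v_0$, $\vartheta_0^\varepsilon\to\vartheta_0$ in $L^2_{\mathrm{loc}}$ recovers the initial condition. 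The pointwise bounds $\underline J/2\leq J\leq 2\bar J$ and $\vartheta\geq 0$ are preserved at the limit.

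The main technical obstacle lies in the approximation step: calibrating the scale $R(\varepsilon)$ so that \emph{all} of the weighted Sobolev-type norms in Theorem~\ref{ThmLocNV} stay uniformly bounded \emph{simultaneously}, in particular the derivative norms $\|(v_0^\varepsilon)'\|_2$ and $\|(\varrho_0^\varepsilon)^{3/2}(\vartheta_0^\varepsilon)'\|_2$, whose commutator pieces from $\chi_R'$ must be balanced against the at-most-polynomial growth of $v_0,\vartheta_0$ and the decay of $\varrho_0$ at infinity. Once this calibration is in place, the rest is a routine compactness argument for the quasilinear parabolic--hyperbolic coupled system.
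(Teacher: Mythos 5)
Your overall strategy is exactly the paper's: lift the density by a small positive constant, truncate the (possibly unbounded) $v_0,\vartheta_0$ by a cutoff, apply Theorem~\ref{ThmLocNV} (whose time of existence and estimates are uniform in the positive lower bound of the density), and pass to the limit via local Aubin--Lions compactness. The limit-passage description is fine. The gap is in the calibration step, and it is a genuine one.

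You assert that $v_0,\vartheta_0$ have ``at-most-polynomial growth,'' and you propose the fixed polynomial coupling $R(\varepsilon)=\varepsilon^{-1/4}$. The polynomial-growth claim is correct for $v_0$ (since $v_0'\in L^2$ gives $|v_0(y)|\le C\sqrt{1+|y|}$), but it is \emph{false} in general for $\vartheta_0$. The hypotheses (\ref{ass-v}) only provide $\sqrt{\varrho_0}\,\vartheta_0\in L^2$ and $\varrho_0^{3/2}\vartheta_0'\in L^2$; if $\varrho_0$ decays, say, exponentially, these allow $\vartheta_0$ and $\vartheta_0'$ to grow super-polynomially. Consequently $\|\vartheta_0\|_{L^2(I_{2R})}$ and $\|\vartheta_0'\|_{L^2(I_{2R})}$ can grow arbitrarily fast in $R$, and for such data the product $\varepsilon\cdot\|\vartheta_0'\|_{L^2(I_{2R(\varepsilon)})}^2$ with $R(\varepsilon)=\varepsilon^{-1/4}$ need not tend to $0$; so the truncated norm $\|(\varrho_0^\varepsilon)^{3/2}(\vartheta_0^\varepsilon)'\|_2$ (and likewise $\|\sqrt{\varrho_0^\varepsilon}\,\vartheta_0^\varepsilon\|_2$) is not uniformly bounded in $\varepsilon$. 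A fixed power-law coupling cannot repair this.

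The paper resolves this by reversing the order of choices: fix the cutoff scale $n$ first, then choose the density lift $\delta_n$ adaptively, small enough that
$$
\delta_n\max\left\{\|v_0\|_{L^2(I_{2n})}^2,\|v_0\|_{L^4(I_{2n})}^4,
\|\vartheta_0\|_{L^2(I_{2n})}^2,\|\vartheta_0'\|_{L^2(I_{2n})}^2\right\}\le 1,
$$
i.e.\ the perturbation is $O(1)$ \emph{by construction}, regardless of how fast the local norms of $\vartheta_0,\vartheta_0'$ grow with $n$. With that one change (choose $R$ first, then $\varepsilon=\varepsilon(R)$ depending on the local norms at scale $R$), the rest of your argument matches the paper's proof.
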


\begin{proof}
We first construct a sequence $\{(\varrho_{0n}, J_{0n}, v_{0n}, \vartheta_{0n})\}_{n=1}^\infty$ approximating $(\varrho_0, J_0, v_0, \vartheta_0)$, so that Theorem \ref{ThmLocNV} applies.

For each integer $n\geq1$, choose $0<\delta_n\leq\frac1n$ sufficiently small such that
\begin{equation}
  \label{3.1}
\delta_n\max\left\{\|v_0\|_{L^2(I_{2n})}^2, \|v_0\|_{L^4(I_{2n})}^4,
\|\vartheta_0\|_{L^2(I_{2n})}^2,\|\vartheta_0'\|_{L^2(I_{2n})}^2\right\}
\leq1,
\end{equation}
where $I_{2n}=(-2n,2n)$. Choose $\varphi\in C_0^\infty((-2,2))$, with $\varphi\equiv1$ on $(-1,1)$, $0\leq\varphi\leq1$ on $(-2,2)$,
and $|\varphi'|\leq2$. Since $v_0', \vartheta_0'\in L^2(\mathbb R)$, it is clear $v_0,\vartheta_0\in C(\mathbb R)$, and
\begin{eqnarray}
  |v_0(y)|
&\leq&|v_0(0)|+\|v_0'\|_2\sqrt{|y|}\leq A_0\sqrt{1+|y|},\label{3.2}
\end{eqnarray}
where $A_0=\max\left\{|v_0(0)|,\|v_0'\|_2\right\}$.

Define $\varrho_{0n}, J_{0n}, v_{0n},$ and $\vartheta_{0n}$ as
\begin{eqnarray*}
&&\varrho_{0n}=\delta_n+\varrho_0,\quad
J_{0n}=J_0,\quad v_{0n}=\varphi\left(\frac{\cdot}{n}\right) v_0, \quad \vartheta_{0n}=
\varphi\left(\frac{\cdot}{n}\right)\vartheta_0.
\end{eqnarray*}
Then
\begin{equation}
  \label{3.4}
0<\delta_n\leq\varrho_{0n}\leq\bar\varrho+1,\quad
\|\varrho_{0n}'\|_\infty=\|\varrho_0'\|_\infty.
\end{equation}
(\ref{3.1}) and (\ref{3.2}) imply that
\begin{eqnarray}
  \|\sqrt{\varrho_{0n}}v_{0n}\|_2^2
&\leq& \|\sqrt{\varrho_0}v_0\|_2^2+ \delta_n\|v_0\|_{L^2((-2n,2n))}^2\leq\|\sqrt{\varrho_0}v_0\|_2^2+1,
\label{3.5}\\
\|\sqrt{\varrho_{0n}}v_{0n}^2\|_2^2
&\leq& \|\sqrt{\varrho_0}v_0^2\|_2^2+ \delta_n\|v_0\|_{L^4((-2n,2n))}^4\leq\|\sqrt{\varrho_0}v_0^2\|_2^2+1,
\nonumber\\
  \|v_{0n}'\|_2^2
&\leq&
2\|v_0'\|_2^2+64A_0^2,\label{3.6}\\
\|\sqrt{\varrho_{0n}}\vartheta_{0n}\|_2^2&\leq&\|\sqrt{\varrho_0}\vartheta_0
\|_2^2+1.  \label{3.7}
\end{eqnarray}
Due to (\ref{3.1}) and $0\leq\delta_n\leq\frac1n$, one can get
\begin{eqnarray}
  \label{3.8}
\|\varrho_{0n}^{\frac32}\vartheta_{0n}'\|_2^2
&\leq&8\int_{-2n}^{2n} (\varrho_0^3+\delta_n^3)
\left(|\vartheta_0'|^2+\frac{4}{n^2}\vartheta_0^2\right)dy\nonumber\\
&\leq&8\left(\|\varrho_0^{\frac32}\vartheta_0'\|_2^2 +\delta_n^3 \|\vartheta_0'\|_2^2+4\bar\varrho^2
\|\sqrt{\varrho_0}\vartheta_0\|_2^2+4\delta_n^3\|\vartheta_0
\|_{L^2((-2n,2n))}^2\right)\nonumber\\
&\leq&8\left(\|\varrho_0^{\frac32}\vartheta_0'\|_2^2 +5+4\bar\varrho^2
\|\sqrt{\varrho_0}\vartheta_0\|_2^2\right).
\end{eqnarray}

Since $(\varrho_{0n}, J_{0n}, v_{0n}, \vartheta_{0n})$ fulfills the assumption (\ref{ass-nv}), with $\underline\varrho$ and $\bar\varrho$ replaced by $\delta_n$ and $\bar\varrho+1$, respectively, by (\ref{3.4})--(\ref{3.8}) and Theorem \ref{ThmLocNV}, there
is a positive time $\mathcal T$ depending only on the quantities stated in Theorem \ref{ThmLocNV}, which in particular is independent of $n$, such that the problem (\ref{EqJ})--(\ref{IC}), has a
unique solution $(J_n, v_n, \vartheta_n)$, satisfying
\begin{align}
&\frac{\underline J}{2}\leq J_n\leq2\bar J,\quad\vartheta_n\geq0,\quad\mbox{ on }\mathbb R\times[0,\mathcal T],\label{AE0}
\\
&\sup_{0\leq t\leq\mathcal T}
\|(\partial_tJ_n,\sqrt{\varrho_{0n}}\partial_yJ_n)\|_2^2
+\int_0^{\mathcal T}\|\sqrt{\varrho_{0n}}\partial_{yt}J_n\|_2^2 dt\leq\mathcal E,\label{AE1}\\
&\sup_{0\leq t\leq\mathcal T}\|(\varrho_{0n}^{\frac12}v_n,\varrho_{0n}^{\frac12}v_n^2,\partial_yv_n)\|_2^2
+\int_0^{\mathcal
  T}\|(v_n\partial_yv_n, \varrho_{0n}^{\frac12}\partial_tv_n,
  \varrho_{0n}^{\frac12}\partial_{yy}v_n) \|_2^2dt\leq\mathcal E,
\label{AE2}\\
&\sup_{0\leq t\leq\mathcal
  T}\left\|\left(\sqrt{\varrho_{0n}}\vartheta_n,
\varrho_{0n}^{\frac32}
  \partial_y\vartheta_n\right)\right\|_2^2+
  \int_0^\mathcal T
\left\|\left(\partial_y\vartheta_n,\varrho_{0n}\left(\frac{\partial_y\vartheta_n}{J_n}\right)_y,\varrho_{0n}^2\partial_t\vartheta_n
\right)\right\|_2^2dt \leq\mathcal E,\label{AE3}
\end{align}
for a positive constant $\mathcal E$ independent of $n$.

Since $\varrho_0'\in L^\infty(\mathbb R)$ and $\varrho_0(y)>0$
for all $y\in\mathbb R$, so $\min_{|y|\leq R}\varrho_0>0$
for any $R\in\mathbb R$. Thus, it follows from (\ref{AE0})--(\ref{AE3}) that
\begin{eqnarray*}
\|(J_n,v_n,\vartheta_n)\|_{L^\infty(0,\mathcal T; H^1(I_k))}, \|v_n\|_{L^2(0,\mathcal T; H^2(I_k))}&\leq&\mathcal E_k,\\
\|\partial_tJ_n\|_{L^2(0,\mathcal T; H^1(I_k))},
\|(\partial_t v_n,\partial_t\vartheta_n)\|_{L^2(0,\mathcal T; L^2(I_k))}&\leq&\mathcal E_k,
\end{eqnarray*}
for any positive integer $k$, where $I_k=(-k,k)$ and $\mathcal E_k$ is a positive constant independent of $n$. Then, by the Cantor's diagonal argument in both $n$ and $k$, there is a subsequence, denoted still by $\{(J_n, v_n, \vartheta_n)\}_{n=1}^\infty$, and $(J, v, \vartheta)$, such that
\begin{eqnarray}
&(J_n,v_n,\vartheta_n)\rightharpoonup^*(J,v,\vartheta)\quad\mbox{in }L^\infty(0,\mathcal T; H^1(I_R)),\label{WC1}
\\
&v_n\rightharpoonup v\quad\mbox{in }L^2(0,\mathcal T;
H^2(I_R)),\label{WC2}\\
&\partial_tJ_n\rightharpoonup^*\partial_tJ\quad\mbox{in
}L^\infty(0,\mathcal T; L^2(I_R)),\label{WC3}\\
&\partial_tJ_n\rightharpoonup\partial_tJ_n\quad\mbox{in }L^2(0,\mathcal
T; H^1(I_R)),\label{WC4}\\
&(\partial_tv_n,\partial_t\vartheta_n)\rightharpoonup
(\partial_tv,\partial_t\vartheta)\quad\mbox{in
}L^2(0,\mathcal T; L^2(I_R)),\label{WC5}
\end{eqnarray}
for any $R\in(0,\infty)$, where $\rightharpoonup$ and
$\rightharpoonup^*$ denote the weak and weak-* convergence,
respectively, in the corresponding spaces, and $I_R=(-R,R)$. Moreover, noticing that
$H^1(I_R)\hookrightarrow\hookrightarrow C(\overline{I_R})$,
by the Aubin-Lions
lemma, and using the Cantor's diagonal argument again (in both $n$ and
$k$), one can get a subsequence of the previous subsequence, denoted
still by $\{(J_n, v_n, \vartheta_n)\}_{n=1}^\infty$, such that
\begin{eqnarray}
 &(J_n,v_n,\vartheta_n) \rightarrow (J,v,\vartheta)\quad \mbox{in }C([0,\mathcal T]; C(\overline{I_R})),\label{SC1}\\
 &v_n\rightarrow v\quad\mbox{in }L^2(0,\mathcal T; H^1(I_R)), \label{SC2}
\end{eqnarray}
for any $R\in(0,\infty)$. These and (\ref{AE0}) imply that
\begin{equation}\label{PT}
\frac{\underline J}{2}\leq J\leq 2\bar J,\quad \vartheta\geq0,\quad\mbox{ on }\mathbb R\times[0,\mathcal T].
\end{equation}
It follows from (\ref{AE0}), (\ref{AE3}), (\ref{WC1}), (\ref{SC1}), and (\ref{PT}) that for any $R\in(0,\infty)$
\begin{equation}\label{VC5}
  \frac{\partial_y\vartheta_n}{J_n} \rightharpoonup\frac{\vartheta_y}{J}\quad\mbox{in }L^2(0,\mathcal T; H^1(I_R)).
\end{equation}

Thanks to the convergences (\ref{WC1})--(\ref{SC2}), and (\ref{VC5}), as well as the a priori estimates
(\ref{AE1})--(\ref{AE3}), one can obtain by the weakly lower semi-continuity of norms that $(J, v, \vartheta)$ has the
regularities stated in the proposition.
Besides, by (\ref{WC1})--(\ref{SC2}) and (\ref{VC5}), one can take the limit, as
$n\rightarrow\infty$, to conclude that $(J, v, \vartheta)$ satisfies equations (\ref{EqJ})--(\ref{EqTheta}), in the sense
of distribution. However, due to the regularities of $(J, v, \vartheta)$ and the positivity of $\varrho_0$
on $\mathbb R$, one can show that
the equations are satisfied a.e.\,
in $\mathbb R\times(0,\mathcal T)$. While the initial condition (\ref{IC}) is guaranteed by (\ref{SC1}) and (\ref{SC2}).
Therefore, $(J, v, \vartheta)$ is the desired solution to the problem (\ref{EqJ})--(\ref{IC}). This completes the proof.
\end{proof}

\section{Global well-posedness in the presence of far field vacuum}
\label{SecGlo-v}
This section is devoted to proving the global existence and uniqueness of solutions in the presence of far field vacuum via  establishing a series of a priori estimates, which are finite up to any finite time.
Throughout this section, we will suppose, in addition to the assumption (\ref{ass-v}), that
\begin{equation}
  \varrho_0\in L^1(\mathbb R),\quad\frac{J_0'}{\sqrt{\varrho_0}},\quad\sqrt{\varrho_0}\vartheta_0'\in L^2(\mathbb R),\label{finitemass}
\end{equation}
and, for some given positive constant $K_1$,
\begin{equation}
|\varrho_0'(y)|\leq K_1\varrho_0^{\frac32}(y),\quad y\in\mathbb R.\label{HSLOW}
\end{equation}

\begin{remark}
\label{WEAKERSLOWDECAY}
It should be noticed that though (\ref{HSLOW}) is assumed throughout this section,
yet it is not needed for some results (say, Propositions \ref{PropBasic}--\ref{PropIdJ} and Corollary \ref{CorJlower}),
while for some others (Proposition \ref{PropEstL2-V} and Proposition \ref{PropEstJy}),
one needs only the following weaker assumption
$$
|\varrho_0'|\leq \tilde K_1\varrho_0, \mbox{ on }\mathbb R,\mbox{
for some positive constant }\tilde K_1.$$

Note that the above weaker assumption can be satisfied even if the initial density decays very fast. It is an interesting problem to see if all the results in this section (and thus the well-posedness) still hold without (\ref{HSLOW}) or under the weaker assumption.
\end{remark}

In the rest of this section, we always assume that $(J,v,\vartheta)$ is
a solution to the problem (\ref{EqJ})--(\ref{IC}), in $\mathbb R\times(0,T)$, for some positive
time $T$, satisfying
\begin{eqnarray*}
&&0<J,J^{-1}\in L^\infty(0,T;L^\infty(\mathbb R)),\quad\vartheta\geq0,\\
&&J_t,\sqrt{\varrho_0}J_y, \sqrt{\varrho_0}v, \sqrt{\varrho_0}v^2, v_y, \sqrt{\varrho_0}\vartheta, \varrho_0^{\frac32}\vartheta_y\in L^\infty(0, T; L^2(\mathbb R)), \\
&&
\sqrt{\varrho_0}J_{yt}, vv_y, \sqrt{\varrho_0}v_t,\sqrt{\varrho_0}v_{yy}, \vartheta_y, \varrho_0^2\vartheta_t\in L^2(0, T; L^2(\mathbb R)).
\end{eqnarray*}

\subsection{Basic estimates and the control of $J$}
\label{ssecBscV}
The basic energy estimates, uniform positive lower bound of $J$, and a control on the upper bound of $J$ are derived
in this subsection. We start with the conservation of the energy.

\begin{proposition}
\label{PropBasic}
Set $\mathscr E_0:=\int_\mathbb R\varrho_0\left(\frac{v_0^2}{2}+c_v\vartheta_0\right)dy$. Then
$$
\left[\int_\mathbb R\varrho_0\left(\frac{v^2}{2}+c_v\vartheta\right)dy\right](t)=\mathscr E_0.
$$
\end{proposition}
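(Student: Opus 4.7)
The strategy is to derive the conservation identity in divergence form by combining (\ref{EqV}) and (\ref{EqTheta}), integrate against a spatial cutoff, and pass to the limit.

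Starting from the identity
\begin{equation*}
\varrho_0 E_t - \kappa\Bigl(\frac{\vartheta_y}{J}\Bigr)_y = \Bigl(\Bigl(\mu\frac{v_y}{J} - R\frac{\varrho_0\vartheta}{J}\Bigr)v\Bigr)_y,
\end{equation*}
which is precisely (\ref{EqE}) (obtained by multiplying (\ref{EqV}) by $v$, adding (\ref{EqTheta}), and using $J_t = v_y$), I would choose a cutoff $\varphi_R \in C_c^\infty(\mathbb{R})$ with $\varphi_R \equiv 1$ on $(-R,R)$, supported in $(-2R,2R)$, $0\le \varphi_R\le 1$ and $|\varphi_R'|\le C/R$. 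Multiplying the identity by $\varphi_R$, integrating over $\mathbb{R}\times(0,t)$, and integrating by parts in $y$ gives
\begin{equation*}
\int_{\mathbb R}\varrho_0 E(t)\varphi_R\,dy - \int_{\mathbb R}\varrho_0 E_0\varphi_R\,dy = -\int_0^t\!\!\int_{\mathbb R}\Bigl[\kappa\frac{\vartheta_y}{J} + \Bigl(\mu\frac{v_y}{J} - R\frac{\varrho_0\vartheta}{J}\Bigr)v\Bigr]\varphi_R'\,dy\,ds.
\end{equation*}

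Since $\varrho_0\in L^1(\mathbb{R})$ and $\sqrt{\varrho_0}v^{2}, \sqrt{\varrho_0}\vartheta\in L^\infty(0,T;L^2)$, one has $\varrho_0 E\in L^\infty(0,T;L^1)$ (use $\int\varrho_0\vartheta\,dy\le\|\sqrt{\varrho_0}\|_2\|\sqrt{\varrho_0}\vartheta\|_2$); so the LHS converges by dominated convergence to $\int\varrho_0 E(t)\,dy - \mathscr{E}_0$ as $R\to\infty$. For the RHS, the $\vartheta_y/J$ piece is bounded by $(C/\sqrt{R})\int_0^t\|\vartheta_y\|_2\,ds$ by Cauchy--Schwarz on the support of $\varphi_R'$, hence vanishes. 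The $\varrho_0\vartheta v/J$ piece is bounded by $\int_0^t\|\sqrt{\varrho_0}\vartheta\|_{L^2(|y|>R)}\|\sqrt{\varrho_0}v\|_2\,ds$, which tends to zero by dominated convergence.

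The delicate piece is $\int(v_y/J)v\,\varphi_R'\,dy$, since $v$ itself carries no a priori decay. This is where (\ref{HSLOW}) enters: it is equivalent to $|(1/\sqrt{\varrho_0})'|\le K_1/2$, so $1/\sqrt{\varrho_0}(y)\le C(1+|y|)$. Hence on $R<|y|<2R$,
\begin{equation*}
\|v\|_{L^2(R<|y|<2R)}\le CR\,\|\sqrt{\varrho_0}v\|_{L^2(|y|>R)},
\end{equation*}
and the factor $CR$ absorbs the $1/R$ from $|\varphi_R'|$, leaving
\begin{equation*}
\Bigl|\int_{\mathbb R}\frac{v_y}{J}v\,\varphi_R'\,dy\Bigr|\le C\,\|v_y\|_2\,\|\sqrt{\varrho_0}v\|_{L^2(|y|>R)},
\end{equation*}
whose time integral vanishes as $R\to\infty$ by dominated convergence. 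Combining these, the RHS tends to zero and the desired identity follows, giving $\int\varrho_0 E(t)\,dy \equiv \mathscr{E}_0$.

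\textbf{Main obstacle.} The only nontrivial point is justifying that the momentum flux $v\cdot v_y/J$ carries no mass through the ``boundary at infinity'' even though $v$ need not decay; this is precisely the role of the slow-decay assumption (\ref{HSLOW}), which (via the boundedness of $(1/\sqrt{\varrho_0})'$) converts the $L^2$-control of $\sqrt{\varrho_0}v$ into a tame growth estimate for $\|v\|_{L^2}$ on dyadic annuli.
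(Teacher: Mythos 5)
Your argument is correct, but at the crucial step you take a detour the paper avoids. The paper's proof handles the flux term $\mu v v_y/J$ directly by using that $vv_y\in L^2(\mathbb R\times(0,T))$, which is part of the regularity class assumed at the start of Section \ref{SecGlo-v}; this gives
\begin{equation*}
\frac{1}{r}\int_0^t\int_{r\leq|y|\leq 2r}|v||v_y|\,dy\,d\tau \leq \frac{C}{\sqrt r}\,\|vv_y\|_{L^2(\mathbb R\times(0,t))}\longrightarrow 0,
\end{equation*}
with no need for decay information on $v$ alone. You instead invoke (\ref{HSLOW}) to convert $\|\sqrt{\varrho_0}v\|_{L^2(|y|>R)}$ into an $O(R)$ bound on $\|v\|_{L^2(R<|y|<2R)}$. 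That step is sound (indeed $|(1/\sqrt{\varrho_0})'|\le K_1/2$, so $1/\sqrt{\varrho_0}(y)\lesssim 1+|y|$), and the dominated-convergence conclusion is fine since $\|v_y\|_2\in L^1(0,t)$. However, this makes Proposition \ref{PropBasic} appear to depend on (\ref{HSLOW}), which the paper explicitly points out (Remark \ref{WEAKERSLOWDECAY}) is \emph{not} needed here; the conservation identity already follows from the regularity class alone. So the paper's route is both simpler and establishes the result under weaker hypotheses. The other two flux terms you estimate essentially the same way as the paper.
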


\begin{proof} Let $\varphi$ be the cut-off function given in the proof of Theorem \ref{ThmLocV}, and set
$\varphi_r(\cdot)=\varphi\left(\frac \cdot r\right).$ Testing (\ref{EqE}) by $\varphi_r$ yields
\begin{equation}
\int_\mathbb R\varrho_0E\varphi_rdy =
\int_\mathbb R\varrho_0E_0\varphi_rdy
-\int_0^t\int_\mathbb R\frac{\varphi_r'}J\left(\kappa\vartheta_y+\mu vv_y-R\varrho_0\vartheta v\right)dyd\tau.
\label{BASIC.1}
\end{equation}
Direct calculations show that
\begin{eqnarray*}
&&\left|\int_0^t\int_\mathbb R\frac{\varphi_r'}J\left(\kappa\vartheta_y+\mu vv_y-R\varrho_0\vartheta v\right)dyd\tau\right|\nonumber \\
&\leq&\frac Cr\int_0^t\int_{r\leq|y|\leq2r}\left(|\vartheta_y|
+|v||v_y|+\varrho_0\vartheta|v|\right)dy
d\tau\nonumber \\
&\leq&\frac Cr\int_0^t\left[(\|\vartheta_y\|_2+\|vv_y\|_2)\sqrt{r}+\|\sqrt{\varrho_0}\vartheta\|_2\|\sqrt{\varrho_0}
v\|_2\right]d\tau\\
&\leq&\frac{C}{\sqrt{r}}\left(1+\|\left(\vartheta_y,vv_y,\sqrt{\varrho_0}v,
\sqrt{\varrho_0}\vartheta\right)\|_{L^2(\mathbb R\times(0,t))}^2\right),
\end{eqnarray*}
for any $r\geq1$, where $C$ is a positive constant independent of $r$ but may depend on $t$.
Then, taking $r\rightarrow\infty$ in (\ref{BASIC.1}) gives the desired identity.
\end{proof}

The equality for $J$ in the next proposition is in the spirit of Kazhikov-Shelukin
\cite{KAZHIKOV77}, where the mass Lagrangian
coordinate, rather than the flow map, was considered.

\begin{proposition}
\label{PropIdJ}
It holds that for any $(y,t)\in\mathbb R\times(0,T)$
$$
J(y,t)=B(y,t)
\left(J_0(y)
+\frac R\mu\int_0^t
\frac{\varrho_0(y)\vartheta(y,\tau)}{B(y,\tau)} d\tau\right),
$$
where $B(y,t)=\exp\left\{\frac1\mu\int_{-\infty}^y\varrho_0(v-v_0)dy'\right\}.$
\end{proposition}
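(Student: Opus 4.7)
The plan is to verify the stated formula by recognizing that it is equivalent to the integrated form of the identity
\[
\mu\,\partial_t\!\left(\frac{J}{B}\right)=\frac{R\varrho_0\vartheta}{B},
\]
combined with the initial condition $B(y,0)=1$ (since $v|_{t=0}=v_0$ forces the exponent in $B$ to vanish at $t=0$). If this ODE-in-$t$ identity is established pointwise in $y$, then integrating from $0$ to $t$ and multiplying by $B(y,t)/\mu$ yields exactly the claimed expression.

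First, I would check that $B$ is well-defined. By \eqref{finitemass} and the regularity assumptions on the solution, $\varrho_0\in L^1(\mathbb{R})$ and $\sqrt{\varrho_0}\,v,\sqrt{\varrho_0}\,v_0\in L^2(\mathbb{R})$, so by Cauchy--Schwarz, $\varrho_0(v-v_0)\in L^1(\mathbb{R})$ for every $t\in[0,T)$. Thus the exponent defining $B$ is finite, and moreover one may differentiate under the integral (justified by the dominated-convergence theorem together with $\sqrt{\varrho_0}\,v_t\in L^2$ and $\varrho_0\in L^1$) to obtain
\[
\mu\,\frac{B_t}{B}=\int_{-\infty}^{y}\varrho_0 v_t\,dy'.
\]

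The next step is to use the momentum equation \eqref{EqV}, rewritten as $\varrho_0 v_t=G_y$ with $G$ given by \eqref{ExG}, and to argue that $G(y,t)\to 0$ as $y\to-\infty$ so that
\[
\int_{-\infty}^{y}\varrho_0 v_t(y',t)\,dy'=\int_{-\infty}^{y}G_{y'}(y',t)\,dy'=G(y,t).
\]
This is the step I expect to be the main obstacle. To justify it I would observe that, for a.e.\ $t\in(0,T)$, one has $G\in L^2(\mathbb{R})$ (because $v_y,\sqrt{\varrho_0}\vartheta\in L^\infty(0,T;L^2)$ and $J^{\pm1}\in L^\infty$) and $G_y=\varrho_0 v_t\in L^2(\mathbb{R})$ (because $\sqrt{\varrho_0}v_t\in L^2$ and $\sqrt{\varrho_0}\in L^\infty$), so $G(\cdot,t)\in H^1(\mathbb{R})$ and consequently $G(y,t)\to 0$ as $y\to\pm\infty$. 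Hence $\mu B_t/B=G$.

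Having established $\mu B_t/B=G$, the identity is a one-line computation using $J_t=v_y$ from \eqref{EqJ} and $JG=\mu v_y-R\varrho_0\vartheta$ from \eqref{ExG}:
\[
\mu\,\partial_t\!\left(\frac{J}{B}\right)
=\frac{\mu J_t}{B}-\frac{J}{B}\cdot\mu\frac{B_t}{B}
=\frac{\mu v_y-JG}{B}
=\frac{R\varrho_0\vartheta}{B}.
\]
Integrating in time from $0$ to $t$, using $B(y,0)=1$ and hence $(J/B)(y,0)=J_0(y)$, and multiplying by $B(y,t)/\mu$ delivers the claimed formula. Throughout, the computations are pointwise-in-$y$ identities that, given the regularity listed at the start of Section \ref{SecGlo-v}, hold for a.e.\ $y$ and all $t$; a short density/approximation argument (for instance working with the approximate solutions from Section \ref{SecLoc-v} and passing to the limit) can be invoked if a fully pointwise statement is desired.
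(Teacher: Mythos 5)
Your proof is correct, and it follows a genuinely different route from the paper's. The paper first integrates \eqref{EqV} in time (using $v_y/J=\partial_t\log J$), then integrates the result in $y$ over $(z,y)$; this produces the identity
\[
\int_{-\infty}^y\varrho_0(v-v_0)\,dy' - \mu\bigl(\log J - \log J_0\bigr) + R\int_0^t\frac{\varrho_0\vartheta}{J}\,d\tau = f(t),
\]
and the main work there is showing $f\equiv0$ via three separate tail estimates, averaging over $(-(k+1),-k)$ and sending $k\to\infty$, on $\log J-\log J_0$, on $\int_0^t\tfrac{\varrho_0\vartheta}{J}\,d\tau$, and on $\int_{-\infty}^y\varrho_0 v\,dy'$. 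Only then does it exponentiate and manipulate the resulting ODE for $\exp\{\tfrac R\mu\int_0^t\tfrac{\varrho_0\vartheta}{J}\,d\tau\}$ to recover the stated formula. You instead differentiate $B$ in time at the outset, use $\varrho_0 v_t=G_y$ together with $G(\cdot,t)\in H^1(\mathbb R)$ for a.e.\ $t$ (so $G\to0$ at $-\infty$) to obtain the clean identity $\mu B_t/B=G$, and then observe that the claimed formula is just the integrated form of the linear ODE $\mu\,\partial_t(J/B)=R\varrho_0\vartheta/B$ with $(J/B)|_{t=0}=J_0$. This collapses the paper's three tail estimates into the single observation that $G\in L^\infty(0,T;L^2)\cap L^2(0,T;\dot H^1)$, and dispenses with the exponentiation and back-substitution steps entirely. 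The one point deserving a word more care is the differentiation of $B$ under the integral, but it is routine: since $\chi_{(-\infty,y)}\sqrt{\varrho_0}\in L^2$ (by $\varrho_0\in L^1$), $\sqrt{\varrho_0}v\in L^\infty(0,T;L^2)$, and $\sqrt{\varrho_0}v_t\in L^2(0,T;L^2)$, the map $t\mapsto\int_{-\infty}^y\varrho_0(v-v_0)\,dy'$ is absolutely continuous with a.e.\ derivative $\int_{-\infty}^y\varrho_0 v_t\,dy'$.
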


\begin{proof}
It follows from (\ref{EqJ}) and (\ref{EqV}) that
\begin{equation*}
  \varrho_0v-\varrho_0v_0-\mu\left[(\log J)_y-(\log J_0)'\right]
+R\int_0^t\left(\frac{\varrho_0\vartheta}{J}\right)_yd\tau =0.
\end{equation*}
Integrating the above equation in the spatial variable over $(z,y)$ yields
\begin{eqnarray*}
\int_{-\infty}^y(\varrho_0 v-\varrho_0v_0)dy'-\mu\left(\log J(y,t)-\log J_0(y)\right)
+R\int_0^t\frac{\varrho_0(y)\vartheta(y,\tau)}{J(y,\tau)}d\tau \\
=\int_{-\infty}^z(\varrho_0v-\varrho_0v_0)dy'
-\mu\left(\log J(z,t)-\log J_0(z)\right)
+R\int_0^t\frac{\varrho_0(z)\vartheta(z,\tau)}{J(z,\tau)}d\tau.
\end{eqnarray*}
Therefore, there is a function $f(t)$ independent of $y$ such that
\begin{equation}\label{f(t)}
\int_{-\infty}^y(\varrho_0 v-\varrho_0v_0)dy'-\mu\left(\log J-\log J_0\right)
+R\int_0^t\frac{\varrho_0\vartheta}{J }d\tau=f(t).
\end{equation}

We claim that $f\equiv0$. 
Set $\delta_T:=\inf_{(y,t)\in\mathbb R\times(0,T)}J(y,t)>0$. It follows from (\ref{EqJ}) and $v_y\in L^2(\mathbb R\times(0,T))$ that
\begin{eqnarray*}
\Bigg|\int_{-(k+1)}^{-k}(\log J-\log J_0)dy\Bigg|=\left|\int_{-(k+1)}^{-k}\int_0^t\frac{v_y}{J}d\tau dy\right|&&\\
\leq\delta_T^{-1}\sqrt t\|v_y\|_{L^2((-(k+1),-k)\times(0,t))}
&\rightarrow&0,\quad \mbox{as }k\rightarrow\infty.
\end{eqnarray*}
While $\varrho_0\vartheta\in L^1(0,T; L^1(\mathbb R))$ yields
\begin{eqnarray*}
  \int_{-(k+1)}^{-k}\int_0^t\frac{\varrho_0\vartheta}{J}d\tau dy
\leq\delta_T^{-1}\|\varrho_0\vartheta\|_{L^1((-(k+1),-k)\times(0,t))}
\rightarrow0, \quad\mbox{as }k\rightarrow\infty.
\end{eqnarray*}
Since $\varrho_0\in L^1(\mathbb R)$ and $\sqrt{\varrho_0}v\in L^\infty(0,T; L^2(\mathbb R))$, one has
\begin{equation*}
\left|\int_{-(k+1)}^{-k}\int_{-\infty}^y\varrho_0vdy'dy\right|
 \leq\left(\int_{-\infty}^{-k}\varrho_0 dy\right)^{\frac12}\left(\int_{-\infty}^{-k}\varrho_0v^2\right)^{\frac12}
 \rightarrow0, \quad\mbox{as }k\rightarrow\infty.
\end{equation*}
Hence, $f(t)\equiv0$, and, consequently, (\ref{f(t)}) gives
$$
\int_{-\infty}^y\varrho_0 vdy'-\mu\log J
+R\int_0^t\frac{\varrho_0 \vartheta }{J }d\tau
=\int_{-\infty}^y\varrho_0v_0dy'-\mu\log J_0.
$$
Dividing both sides by $\mu$ and taking the exponential yield
\begin{equation} \frac1J\exp\left\{\frac{R}{\mu}\int_0^t\frac{\varrho_0\vartheta}{J}d\tau
\right\}
=\exp\left\{\frac1\mu\int_{-\infty}^y\varrho_0(v_0-v)dy'\right\}
\frac{1}{J_0}.\label{IdJ.4}
\end{equation}
Multiplying (\ref{IdJ.4}) by $\frac{R}{\mu}\varrho_0\vartheta$ and integrating in $t$ yield
$$
\exp\left\{\frac{R}{\mu}\int_0^t\frac{\varrho_0
\vartheta}{J}d\tau
\right\}
=1+\frac{R\varrho_0}{\mu J_0}\int_0^t\exp\left\{\frac1\mu\int_{-\infty}^y\varrho_0(v_0-v)
dy'\right\}\vartheta d\tau.
$$
Substituting the above into (\ref{IdJ.4}) gives
\begin{eqnarray*}
\frac1J\left(1+\frac{R\varrho_0}{\mu J_0}\int_0^te^{\frac1\mu\int_{-\infty}^y\varrho_0(v_0-v)
dy'}\vartheta d\tau\right)
=e^{\frac1\mu\int_{-\infty}^y\varrho_0(v_0-v)dy'}
\frac{1}{J_0},
\end{eqnarray*}
which yields the desired expression for $J$.
\end{proof}

As a corollary of Propositions \ref{PropBasic} and \ref{PropIdJ}, one can obtain the uniform positive lower bound of $J$ and the upper control of $J$ stated as follows.

\begin{corollary}
  \label{CorJlower}
It holds that
\begin{eqnarray*}
J&\geq&\underline Je^{-\frac{2\sqrt2}{\mu}\sqrt{\|\varrho_0\|_1\mathcal E_0}},\quad\mbox{ and }\\
\|J\|_\infty(t)&\leq& e^{\frac{4\sqrt2}{\mu}\sqrt{\|\varrho_0\|_1\mathcal E_0}}
\left(\bar J+\frac R\mu\int_0^t\|\varrho_0\vartheta\|_\infty d\tau\right).
\end{eqnarray*}
\end{corollary}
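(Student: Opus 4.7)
The plan is to read the two bounds straight off the integral representation in Proposition~\ref{PropIdJ}, once we control $B(y,t)=\exp\{\tfrac1\mu\int_{-\infty}^y\varrho_0(v-v_0)\,dy'\}$ uniformly in $(y,t)$. The only nontrivial step is bounding the exponent of $B$, for which I would combine the Cauchy--Schwarz inequality with the energy identity of Proposition~\ref{PropBasic} and the assumption $\varrho_0\in L^1(\mathbb R)$ from \eqref{finitemass}.

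Concretely, I first write
\[
\left|\int_{-\infty}^y \varrho_0(v-v_0)\,dy'\right|
\le \|\sqrt{\varrho_0}\|_2\,\|\sqrt{\varrho_0}(v-v_0)\|_2
\le \sqrt{\|\varrho_0\|_1}\bigl(\|\sqrt{\varrho_0}v\|_2+\|\sqrt{\varrho_0}v_0\|_2\bigr).
\]
Since $\vartheta\ge 0$, Proposition~\ref{PropBasic} gives $\|\sqrt{\varrho_0}v\|_2^2\le 2\mathscr E_0$ and similarly $\|\sqrt{\varrho_0}v_0\|_2^2\le 2\mathscr E_0$, so that
\[
\left|\int_{-\infty}^y \varrho_0(v-v_0)\,dy'\right|\le 2\sqrt{2}\sqrt{\|\varrho_0\|_1\mathscr E_0},
\]
and consequently
\[
e^{-\frac{2\sqrt{2}}{\mu}\sqrt{\|\varrho_0\|_1\mathscr E_0}}\;\le\;B(y,t)\;\le\;e^{\frac{2\sqrt{2}}{\mu}\sqrt{\|\varrho_0\|_1\mathscr E_0}},
\qquad (y,t)\in\mathbb R\times(0,T).
\]

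For the lower bound of $J$, I simply drop the manifestly nonnegative integral in the identity of Proposition~\ref{PropIdJ} and use $J_0\ge\underline J$:
\[
J(y,t)\;\ge\;B(y,t)J_0(y)\;\ge\;\underline J\,e^{-\frac{2\sqrt{2}}{\mu}\sqrt{\|\varrho_0\|_1\mathscr E_0}}.
\]
For the upper bound, I rewrite the identity as
\[
J(y,t)=B(y,t)J_0(y)+\frac{R}{\mu}\int_0^t\frac{B(y,t)}{B(y,\tau)}\,\varrho_0(y)\vartheta(y,\tau)\,d\tau,
\]
and bound $B(y,t)\le e^{\frac{2\sqrt 2}{\mu}\sqrt{\|\varrho_0\|_1\mathscr E_0}}$ and $B(y,t)/B(y,\tau)\le e^{\frac{4\sqrt 2}{\mu}\sqrt{\|\varrho_0\|_1\mathscr E_0}}$, giving
\[
\|J\|_\infty(t)\;\le\;e^{\frac{4\sqrt{2}}{\mu}\sqrt{\|\varrho_0\|_1\mathscr E_0}}\left(\bar J+\frac{R}{\mu}\int_0^t\|\varrho_0\vartheta\|_\infty\,d\tau\right),
\]
which is the claimed inequality (with $\mathcal E_0=\mathscr E_0$).

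There is no real obstacle here beyond bookkeeping of constants; the only point where care is required is to note that $B$ is in fact pointwise defined and bounded by an energy quantity that is \emph{time-independent}, which is why the exponential factor in both inequalities depends only on initial data and not on $t$. This is exactly where the conservation of energy (Proposition~\ref{PropBasic}), as opposed to a Gronwall-type bound, is crucial.
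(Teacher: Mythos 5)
Your proof is correct and follows essentially the same route as the paper: use Cauchy–Schwarz together with the energy conservation of Proposition~\ref{PropBasic} and $\varrho_0\in L^1$ to derive the two-sided bound on $B$, then read the two inequalities directly from the representation formula in Proposition~\ref{PropIdJ}, dropping the nonnegative integral for the lower bound and estimating $B(y,t)$ and $B(y,t)/B(y,\tau)$ for the upper one. Your identification $\mathcal E_0=\mathscr E_0$ is also the right reading of the paper's (slightly inconsistent) notation.
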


\begin{proof}
Proposition \ref{PropBasic} implies
\begin{eqnarray*}
 \left|\int_{-\infty}^y\varrho_0(v-v_0)dy'\right|
 &\leq& \left(\int_\mathbb R\varrho_0dz\right)^{\frac12}
\left[\left(\int_\mathbb R\varrho_0v^2dy\right)^{\frac12}+
\left(\int_\mathbb R\varrho_0v_0^2dy\right)^{\frac12}\right]\\
&\leq&2\sqrt{2\|\varrho_0\|_1\mathscr E_0}.
\end{eqnarray*}
Therefore, it follows from the definition of $B$ in Proposition \ref{PropIdJ} that
\begin{equation*}
\label{EstB}
\exp\left\{-\frac{2\sqrt2}{\mu}\sqrt{\|\varrho_0\|_1\mathscr E_0}\right\}\leq B(y,t)
\leq \exp\left\{\frac{2\sqrt2}{\mu}\sqrt{\|\varrho_0\|_1\mathscr E_0}\right\}.
\end{equation*}
Due to this and $\vartheta\geq0$, the conclusions follow easily from Proposition \ref{PropIdJ}.
\end{proof}


\subsection{$L^2$ estimates}
\label{ssecL2V}
We now turn to derive the $L^\infty(0,T; L^2(\mathbb R))$ a priori estimates on $(J, v,\vartheta)$. We need an elementary lemma.

\begin{lemma}
\label{lem}
Let $\omega$ and $\eta$ be nonnegative and bounded on $\mathbb R$, satisfying
$|\omega'|\leq K|\omega|$ and $\eta>0$ on $\mathbb R$, for some positive constant $K$. Assume that $f$ is a nonnegative measurable function on $\mathbb R$ such that $\sqrt\omega f, \frac{f'}{\sqrt\eta}\in L^2(\mathbb R)$,
and $\omega f\in L^1(\mathbb R)$. Then,
$$
\|\sqrt\omega f\|_\infty^2\leq 2K\|\sqrt\omega f\|_2^2+8\|\omega\|_\infty^{\frac13}\|\omega f\|_1^{\frac23}
\left\|\frac{f'}{\sqrt\eta}\right\|_2^{\frac43}\|\eta\|_\infty^{\frac23}.
$$
\end{lemma}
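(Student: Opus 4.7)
The plan is to establish the pointwise bound by showing that $\omega f^2\in W^{1,1}(\mathbb R)$, which forces $\omega f^2$ to be absolutely continuous and to vanish at $\pm\infty$. Granting this, for every $y$ one has
$$
\omega(y) f^2(y)\le\int_{\mathbb R}\bigl|(\omega f^2)'(z)\bigr|\,dz\le\int_{\mathbb R}|\omega'|f^2\,dz+2\int_{\mathbb R}\omega f|f'|\,dz.
$$
The first integral on the right is immediately controlled by $K\|\sqrt{\omega}f\|_2^2$ using the hypothesis $|\omega'|\leq K\omega$, and this accounts for the $2K\|\sqrt{\omega}f\|_2^2$ term in the conclusion (the factor $2$ coming from the Young absorption performed below).

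The heart of the proof is bounding $\int\omega f|f'|dz$. The key idea is to do Cauchy--Schwarz with a deliberate split that makes $\|\omega f\|_1$ appear:
$$
\int\omega f|f'|\,dz=\int\sqrt{\omega f}\cdot\sqrt{\omega f}\,|f'|\,dz\le\|\omega f\|_1^{1/2}\left(\int\omega f(f')^2\,dz\right)^{1/2}.
$$
For the inner integral, write $(f')^2=\eta\cdot(f'/\sqrt{\eta})^2$ and pull $\omega f \eta$ out in $L^\infty$:
$$
\int\omega f(f')^2\,dz\le\|\omega f\eta\|_\infty\left\|\frac{f'}{\sqrt{\eta}}\right\|_2^2\le\|\eta\|_\infty\|\omega\|_\infty^{1/2}\|\sqrt{\omega}f\|_\infty\left\|\frac{f'}{\sqrt{\eta}}\right\|_2^2,
$$
where I used $\|\omega f\|_\infty\le\|\omega\|_\infty^{1/2}\|\sqrt{\omega}f\|_\infty$. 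Combining and writing $M:=\|\sqrt{\omega}f\|_\infty$, one arrives at
$$
M^2\le K\|\sqrt{\omega}f\|_2^2+2\|\omega f\|_1^{1/2}\|\eta\|_\infty^{1/2}\|\omega\|_\infty^{1/4}\left\|\frac{f'}{\sqrt{\eta}}\right\|_2 M^{1/2}.
$$
A weighted Young's inequality with exponents $4$ and $4/3$ applied to $M^{1/2}$ absorbs $\frac{1}{2}M^2$ into the left-hand side and produces the claimed fourth-power terms on the right with a constant well below $8$.

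The only nontrivial issue is the preliminary justification that $\omega f^2$ actually lies in $W^{1,1}(\mathbb R)$, so that the FTC argument is legitimate. Integrability of $\omega f^2$ is given, and integrability of $|\omega'|f^2$ follows from $|\omega'|\le K\omega$. For $\omega f|f'|$, one proves a priori finiteness (not the sharp bound) by Cauchy--Schwarz,
$$
\int\omega f|f'|\,dz\le\left(\int\omega^2 f^2\eta\,dz\right)^{1/2}\left\|\frac{f'}{\sqrt{\eta}}\right\|_2\le\|\omega\eta\|_\infty^{1/2}\|\sqrt{\omega}f\|_2\left\|\frac{f'}{\sqrt{\eta}}\right\|_2<\infty,
$$
using $\omega,\eta\in L^\infty$. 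Thus $(\omega f^2)'\in L^1$, $\omega f^2\in C_0(\mathbb R)$, and the pointwise FTC bound used at the start is valid. This soft regularity step is the main technical subtlety; once handled, the rest is routine Cauchy--Schwarz plus Young.
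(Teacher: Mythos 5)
Your proof is correct and follows essentially the same route as the paper: start from the pointwise Sobolev-type bound $\omega(y)f^2(y)\le\int|(\omega f^2)'|$, split the derivative, use $|\omega'|\le K\omega$ for the first piece, and for the second piece perform the Cauchy--Schwarz $\int\omega f|f'|\le\|\omega f\|_1^{1/2}(\int\omega f(f')^2)^{1/2}$ followed by pulling out $\|\omega f\eta\|_\infty\le\|\eta\|_\infty\|\omega\|_\infty^{1/2}\|\sqrt\omega f\|_\infty$, and close with a weighted Young inequality to absorb $\tfrac12\|\sqrt\omega f\|_\infty^2$. The only difference from the paper is your explicit preliminary verification that $\omega f^2\in W^{1,1}(\mathbb{R})$ via the cruder Cauchy--Schwarz estimate with the factor $\|\omega\eta\|_\infty^{1/2}$; the paper elides this step (``by assumptions and elementary calculations''), and your addition is a harmless and sensible piece of extra rigor rather than a genuinely different argument.
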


\begin{proof}
By assumptions and elementary calculations, one deduces
\begin{eqnarray*}
\|\sqrt\omega f\|_\infty^2&\leq& \int_\mathbb R(|\omega'|f^2+2\omega f|f'|)dz\\
&\leq&K\|\sqrt\omega f\|_2^2+2\|\sqrt\omega f\|_\infty^{\frac12}
\|\omega\|_\infty^{\frac14}\|\omega f\|_1^{\frac12}\left\|\frac{f'}{\sqrt\eta}\right\|_2 \|\eta\|_\infty^{\frac12}\\
&\leq&2^{\frac53}\|\omega\|_\infty^{\frac13}\|\omega f\|_1^{\frac23}\left\|\frac{f'}{\sqrt\eta}\right\|_2^{\frac43}
\|\eta\|_\infty^{\frac23} +\frac12\|\sqrt\omega f\|_\infty^2+K\|\sqrt\omega f\|_2^2,
\end{eqnarray*}
which yields the desired conclusion.
\end{proof}

Now we are ready to derive the $L^\infty(0,T; L^2)$ estimates.

\begin{proposition}
  \label{PropEstL2-V}
It holds that
\begin{align*}
  \sup_{0\leq t\leq T}(\|\sqrt{\varrho_0}E\|_2^2+\|J\|_\infty^2)
&+\int_0^T(\|v_y\|_2^2+\||v|v_y\|_2^2+\|\vartheta_y\|_2^2+\|\sqrt{\varrho_0}\vartheta\|_\infty^2) dt\nonumber\\
\leq&\ \  C(1+\|\sqrt{\varrho_0}E_0\|_2^2),
\end{align*}
for a positive constant $C$ depending only on $\mu, \kappa, c_v, R, K_1, \bar\varrho, \underline J, \bar J, \|\varrho_0\|_1, T,$ and $\mathcal E_0$.
\end{proposition}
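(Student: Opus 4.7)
The strategy is to adapt the $L^2$ energy estimate of Proposition \ref{PropEstL2-NV} to the far field vacuum setting, exploiting two new tools tailored to the vacuum case: the uniform positive lower bound of $J$ and the identity-based upper control $\|J\|_\infty(t)\le C(1+\int_0^t\|\varrho_0\vartheta\|_\infty\,d\tau)$ from Corollary \ref{CorJlower}, combined with the energy conservation of Proposition \ref{PropBasic}.

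First I would test (\ref{EqV}) against $v$ and against $v^3$ and test (\ref{EqE}) against $E$, and combine with a sufficiently large constant $A_1$ exactly as in Proposition \ref{PropEstL2-NV}, to obtain
\begin{equation*}
\tfrac{d}{dt}X+\mu\bigl\|\tfrac{v_y}{\sqrt J}\bigr\|_2^2+\kappa c_v\bigl\|\tfrac{\vartheta_y}{\sqrt J}\bigr\|_2^2+A_1\mu\bigl\|\tfrac{vv_y}{\sqrt J}\bigr\|_2^2\le C\int_\mathbb R\Bigl(\tfrac{\varrho_0^2\vartheta^2}{J}+\tfrac{\varrho_0^2\vartheta^2 v^2}{J}\Bigr)\,dy,
\end{equation*}
where $X:=\|\sqrt{\varrho_0}v\|_2^2+\|\sqrt{\varrho_0}E\|_2^2+A_1\|\sqrt{\varrho_0}v^2\|_2^2$; this step uses only positivity of $\varrho_0$ and is identical to the non-vacuum case. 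Then, using the uniform lower bound $J\ge\underline J e^{-C}>0$ from Corollary \ref{CorJlower} together with the conservation laws $\int\varrho_0\vartheta\,dy,\,\int\varrho_0 v^2\,dy\le C$ from Proposition \ref{PropBasic}, I would bound the RHS by $C\|\varrho_0\vartheta\|_\infty+C\|\sqrt{\varrho_0}\vartheta\|_\infty^2$. Finally, the two $L^\infty$ quantities are controlled either by Lemma \ref{lem} (with $\omega=\varrho_0,\varrho_0^2$, $\eta=J$) or directly via $g^2(y)\le\int_\mathbb R|(g^2)'|\,dz$ applied to $g=\sqrt{\varrho_0}\vartheta$ and $g=\varrho_0\vartheta$; using $|\varrho_0'|\le \tilde K_1\varrho_0$ implied by (\ref{HSLOW}) (cf.\ Remark \ref{WEAKERSLOWDECAY}), this yields
\begin{equation*}
\|\sqrt{\varrho_0}\vartheta\|_\infty^2+\|\varrho_0\vartheta\|_\infty^2\le C\|\sqrt{\varrho_0}E\|_2^2+C\|\sqrt{\varrho_0}E\|_2\|\vartheta_y\|_2.
\end{equation*}

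To close the estimate I would run a continuous induction on $\|J\|_\infty$: for a large constant $N$ to be fixed, let $T_N^*:=\sup\{t\in[0,T]:\|J\|_\infty(s)\le N\text{ for all }s\in[0,t]\}$. On $[0,T_N^*]$ the inequality $\|\vartheta_y\|_2^2\le N\bigl\|\tfrac{\vartheta_y}{\sqrt J}\bigr\|_2^2$ lets Young absorb the cross term $\|\sqrt{\varrho_0}E\|_2\|\vartheta_y\|_2$ into the dissipation at the price of a factor $C(N)$, reducing the main inequality to the Gronwall-type bound $\tfrac{d}{dt}X\le C(N)(1+X)+\text{(absorbable)}$. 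Integrating gives
\begin{equation*}
\sup_{0\le t\le T_N^*}X(t)+\int_0^{T_N^*}\Bigl(\bigl\|\tfrac{v_y}{\sqrt J}\bigr\|_2^2+\bigl\|\tfrac{vv_y}{\sqrt J}\bigr\|_2^2+\bigl\|\tfrac{\vartheta_y}{\sqrt J}\bigr\|_2^2\Bigr)d\tau\le C(N,T)(1+X_0),
\end{equation*}
and Corollary \ref{CorJlower} together with Cauchy--Schwarz on $\int_0^t\|\varrho_0\vartheta\|_\infty d\tau\le\sqrt T(\int_0^t\|\varrho_0\vartheta\|_\infty^2d\tau)^{1/2}$ yields a bound $\|J\|_\infty(t)\le C(1+(T\,C(N,T)(1+X_0))^{1/2})$ on $[0,T_N^*]$. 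Choosing $N$ large enough — depending only on $T$, $X_0$ and the listed constants, not on $t$ — so that this bound is strictly less than $N$, a standard continuation argument forces $T_N^*=T$. With $\|J\|_\infty\le N$ on $[0,T]$, the remaining $\int_0^T\|v_y\|_2^2$, $\int_0^T\||v|v_y\|_2^2$, $\int_0^T\|\vartheta_y\|_2^2$ and $\int_0^T\|\sqrt{\varrho_0}\vartheta\|_\infty^2$ bounds then follow from $\|\cdot\|_2^2\le\|J\|_\infty\|\cdot/\sqrt J\|_2^2$ and from the $L^\infty$ inequality above.

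The main obstacle is precisely this coupling: the dissipation in the energy estimate provides $\|\vartheta_y/\sqrt J\|_2^2$, not $\|\vartheta_y\|_2^2$, so closing the $L^\infty$ bound on $\varrho_0\vartheta$ on the RHS requires an upper bound on $\|J\|_\infty$, while Corollary \ref{CorJlower} controls $\|J\|_\infty$ only through $\int\|\varrho_0\vartheta\|_\infty\,d\tau$ — the very quantity being estimated. The continuous induction above is the vacuum analogue of the way (\ref{KSIID}) closes Kazhikov's estimate in \cite{KAZHIKOV77} and is the heart of the argument.
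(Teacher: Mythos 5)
Your setup is right: the combined energy test of (\ref{EqV}), (\ref{EqE}), (\ref{EqV})$\times v^3$, the reduction of the RHS to $C\|\varrho_0\vartheta\|_\infty + C\|\sqrt{\varrho_0}\vartheta\|_\infty^2$ via conservation of $\int\varrho_0 v^2$ and $\int\varrho_0\vartheta$, and the use of Corollary \ref{CorJlower} as the vacuum replacement of (\ref{KSIID}) are all exactly what the paper does. The gap is in how you try to close the loop on $\|J\|_\infty$.

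The elementary $L^\infty$ bound you actually use,
$\|\sqrt{\varrho_0}\vartheta\|_\infty^2\le C\|\sqrt{\varrho_0}E\|_2^2+C\|\sqrt{\varrho_0}E\|_2\|\vartheta_y\|_2$,
after replacing $\|\vartheta_y\|_2$ by $N^{1/2}\|\vartheta_y/\sqrt J\|_2$ and applying Young, puts a factor of order $N$ \emph{multiplying} $X:=\|\sqrt{\varrho_0}E\|_2^2$, so Gronwall on $[0,T_N^*]$ returns $X\le e^{CNT}(1+X_0)$. Feeding this through Corollary \ref{CorJlower} gives $\|J\|_\infty\lesssim 1+\sqrt{N}\,e^{CNT/2}(1+X_0)^{1/2}$, which grows faster than $N$ for every choice of the constants; there is no $N$ for which the derived bound is strictly less than $N$, so the continuation argument does not close. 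Equivalently, eliminating $N$ and writing $Z(t)=\int_0^t\|\sqrt{\varrho_0}\vartheta\|_\infty^2\,d\tau$, your inequalities give $\|J\|_\infty\lesssim(1+Z)^{1/2}$ and $(X+Z)'\lesssim 1+\|J\|_\infty X\lesssim(1+X+Z)^{3/2}$, a superlinear ODI whose solutions blow up in finite time, so it cannot yield the estimate on an arbitrary $[0,T]$.

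The crucial ingredient you are missing is the precise form of Lemma \ref{lem}, which you mention in passing but then set aside. In the term $\int\varrho_0\vartheta|\vartheta_y|\,dy$ the lemma does two things simultaneously: it pulls out $\|\sqrt{\varrho_0}\vartheta\|_\infty^{1/2}$ (so the $L^\infty$ norm self-improves and only $\|\vartheta_y/\sqrt J\|_2^{4/3}\|J\|_\infty^{2/3}$ remains), and — decisively — the remaining factor is $\|\varrho_0\vartheta\|_1^{2/3}$, which is constant by Proposition \ref{PropBasic}, not $\|\sqrt{\varrho_0}\vartheta\|_2$, which is part of the unknown energy $X$. With this form, Young absorbs $\|\vartheta_y/\sqrt J\|_2^{4/3}\|J\|_\infty^{2/3}$ into $\varepsilon\|\vartheta_y/\sqrt J\|_2^2+C_\varepsilon\|J\|_\infty^2$, where $\|J\|_\infty^2$ enters \emph{additively} (not as a multiplier on $X$); Corollary \ref{CorJlower} and Cauchy--Schwarz then give $\|J\|_\infty^2\le C(1+\int_0^t\|\sqrt{\varrho_0}\vartheta\|_\infty^2\,d\tau)$, which is linear in $Z$. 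Adding the resulting inequality for $Z$ to the energy inequality for $X$ closes by a single linear Gronwall on $X+Z$ — no continuous induction is needed, and no factor grows exponentially. In short, your framework is the right one, but the closure must go through the sharper interpolation of Lemma \ref{lem} (with $\|\varrho_0\vartheta\|_1$ as the conserved quantity), not through a bootstrap on $\|J\|_\infty$ with the elementary Sobolev bound.
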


\begin{proof}
Let $\varphi_r$ be given as in the proof of Proposition \ref{PropBasic}.
Testing (\ref{EqV}) with $v\varphi_r^2$ yields
\begin{eqnarray}
  &&\frac12\frac{d}{dt}\int_\mathbb R\varrho_0v^2\varphi_r^2 dy+\mu\int_\mathbb R\frac{v_y^2}{J}
  \varphi_r^2 dy\nonumber\\
  &=&-2\mu\int_\mathbb R\frac{v_y}{J}v\varphi_r\varphi_r'dy+R\int_\mathbb R\frac{\varrho_0\vartheta}
  {J}(v_y\varphi_r^2+2v\varphi_r\varphi_r')dy\nonumber\\
  &\leq&\frac{C}{r}\left(\int_{r\leq|y|\leq2r}|v v_y|dy +\|\sqrt{\varrho_0}v\|_2
  \|\sqrt{\varrho_0}\vartheta\|_2\right)\nonumber\\
  &&+\frac\mu2\int_\mathbb R\frac{v_y^2}{J}\varphi_r^2dy+C\int_\mathbb R\varrho_0E^2 \varphi_r^2dy,\nonumber
\end{eqnarray}
where Corollary \ref{CorJlower} has been used. Therefore, recalling Proposition \ref{PropBasic}, we have
\begin{eqnarray}
   &&\frac{d}{dt}\int_\mathbb R\varrho_0v^2\varphi_r^2 dy+\mu\int_\mathbb R\frac{v_y^2}{J}
  \varphi_r^2 dy\nonumber\\
   &\leq& \frac{C}{r}\Big(\int_{r\leq|y|\leq2r} |v v_y |dy +
  \|\sqrt{\varrho_0}\vartheta\|_2\Big)+C\int_\mathbb R\varrho_0E^2\varphi_r^2dy.
  \label{L2-1}
\end{eqnarray}

Rewrite equation (\ref{EqE}) as
$\varrho_0E_t-\frac{\kappa}{c_v}\left(\frac{E_y}{J}\right)_y=\left(\mu-\frac{\kappa}{c_v}\right)
  \left(\frac{vv_y}{J}\right)_y-R\left(\frac{\varrho_0\vartheta v}{J}\right)_y$ and test it with $E\varphi_r^2$ to get
\begin{eqnarray}
  &&\frac12\frac{d}{dt}\int_\mathbb R\varrho_0E^2\varphi_r^2dy+\frac{\kappa}{c_v}\int_\mathbb R
  \frac{E_y^2}{J}\varphi_r^2 dy \nonumber\\
  &=&\int_\mathbb R\left[R\varrho_0\vartheta v+\left(\frac{\kappa}{c_v}-\mu\right) vv_y\right]\frac{E_y}{J}\varphi_r^2dy\nonumber\\
  &&+2\int_\mathbb R\left[-\frac{\kappa}{c_v}E_y+R\varrho_0\vartheta v+\left(\frac{\kappa}{c_v}-\mu\right)vv_y\right]\frac{E}{J}\varphi_r\varphi_r'dy\nonumber\\
  &\leq&\int_\mathbb R\left[\frac{\kappa}{2c_v} E_y^2+C\left(v^2
  v_y^2+\varrho_0^2\vartheta^2v^2\right)\right]\frac{\varphi_r^2}{J} dy \nonumber\\
  &&+\frac Cr\int_{r\leq|y|\leq 2r}E(|E_y|+\varrho_0\vartheta|v|+|v||v_y|)dy\nonumber\\
  &\leq&\frac{\kappa}{2c_v}\int_\mathbb R\frac{E_y^2}{J}\varphi_r^2 dy+C\int_\mathbb R\left(\frac{v^2
  v_y^2}{J}+\varrho_0^2\vartheta^2v^2\right) \varphi_r^2 dy
  \nonumber\\
  &&+\frac Cr\int_{r\leq|y|\leq 2r}\left[E(|\vartheta_y|+|v||v_y|)+\varrho_0E^{\frac52}\right]dy,
  \nonumber
\end{eqnarray}
where Corollary \ref{CorJlower} has been used.
Therefore,
\begin{eqnarray}
  && \frac{d}{dt}\int_\mathbb R\varrho_0E^2\varphi_r^2dy+\frac{\kappa}{c_v}\int_\mathbb R
  \frac{E_y^2}{J}\varphi_r^2 dy \nonumber\\
  &\leq&C\int_\mathbb R\left(\frac{v^2
  v_y^2}{J}+\varrho_0^2\vartheta^2v^2\right) \varphi_r^2 dy
  +\frac Cr\int_{r\leq|y|\leq 2r}\left[E(|\vartheta_y|+|v||v_y|)+\varrho_0E^{\frac52}\right]dy.
  \label{L2-2}
\end{eqnarray}
Similarly, taking the inner product of (\ref{EqV}) with $v^3\varphi_r^2$ leads to
\begin{eqnarray}
  && \frac{d}{dt}\int_\mathbb R\varrho_0v^4\varphi_r^2 dy +8\mu\int_\mathbb R
  \frac{v_y^2}{J}v^2\varphi_r^2 dy\nonumber \\
  &\leq&\frac Cr\int_{r\leq|y|\leq 2r}(\varrho_0E^{\frac52}+E|v||v_y|) dy+C\int_\mathbb R \varrho_0^2 v^2\vartheta^2 \varphi_r^2 dy. \label{L2-3}
\end{eqnarray}

Multiplying (\ref{L2-3}) with a sufficiently large positive number $M$ and adding the resultant with (\ref{L2-1}) and (\ref{L2-2}), one obtains
\begin{align*}
\frac{d}{dt}\int_\mathbb R&\varrho_0(v^2+E^2+Mv^4)\varphi_r^2 dy +\int_\mathbb R\frac{1}{J}
  \left(\mu v_y^2+\frac{\kappa}{c_v}E_y^2+\mu Mv^2v_y^2\right)\varphi_r^2dy \nonumber\\
  \leq&\ \ C\int_\mathbb R(\varrho_0E^2+\varrho_0^2\vartheta^2v^2)\varphi_r^2 dy+
  \frac Cr\left(\int_{r
  \leq|y|\leq 2r} |v||v_y| dy+ \|\sqrt{\varrho_0}\vartheta\|_2\right)\nonumber\\
  &\ \ +\frac Cr\int_{r\leq|y|\leq 2r}\big[\varrho_0E^{\frac52}+(|v||v_y|+|\vartheta_y|) E \big]dy.
\end{align*}
Integrating the above inequality in $t$ yields
\begin{align}
  \bigg(\int_\mathbb R\varrho_0E^2\varphi_r^2 dy&\bigg)(t)+\int_0^t\int_\mathbb R\frac{\varphi_r^2}{J}
  \left(v_y^2+E_y^2+v^2v_y^2+\vartheta_y^2\right)dyd\tau \nonumber\\
  \leq&\ \ C\left(1+\|\sqrt{\varrho_0}E_0\|_2^2+\int_0^t\int_\mathbb R(\varrho_0E^2+\varrho_0^2\vartheta^2v^2)\varphi_r^2 dyd\tau\right) \nonumber\\
  &\ \ +\frac Cr\int_0^t\int_{r\leq|y|\leq 2r}\big[\varrho_0E^{\frac52}+(|v||v_y|+|\vartheta_y|) E \big]dyd\tau \nonumber\\
  &\ \ +\frac Cr\int_0^t\left(\int_{r
  \leq|y|\leq 2r} |v||v_y| dy+ \|\sqrt{\varrho_0}\vartheta\|_2\right)d\tau.\label{L2-4}
\end{align}

We claim that the last two terms on the right-hand side of (\ref{L2-4})
tend to zero, as $r\rightarrow\infty$. Since $vv_y\in L^2(0,T; L^2)$ and $\sqrt{\varrho_0}\vartheta\in L^\infty(0,T; L^2)$, one deduces
\begin{eqnarray*}
  I_1&:=&\frac Cr\int_0^t\left(\int_{r
  \leq|y|\leq 2r} |v||v_y| dy+ \|\sqrt{\varrho_0}\vartheta\|_2\right)d\tau\\
  &\leq&\frac{Ct^{\frac12}}{r^{\frac12}}\left(\int_0^t\int_{r\leq|y|\leq 2r}v^2 v_y^2 dyd\tau\right)^{\frac12}+\frac{Ct}{r}\sup_{0\leq\tau\leq t}\|\sqrt{\varrho_0}\vartheta\|_2\rightarrow0, \quad\mbox{as }r\rightarrow\infty.
\end{eqnarray*}
For $t\in(0,T)$, choose $\xi(t)\in(-1,1)$
such that $E^2(\xi(t),t)\leq\frac{2\int_{-1}^1\varrho_0E^2dz}{\int_{-1}^1\varrho_0dz}$. Then,
\begin{equation*}
  |E(y,t)|=\left|E(\xi(t),t)+\int_{\xi(t)}^yE_y(z,t)dz\right|
   \leq C\|\sqrt{\varrho_0}E\|_2 +\|E_y\|_2^{\frac12}(|y|+1)^{\frac12},\quad\forall y\in\mathbb R.
\end{equation*}
Hence, for any $r\geq1$, it holds that
\begin{equation}
  \label{UnifE}
  |E(y,t)|\leq C\left(\|\sqrt{\varrho_0}E\|_2 +\|E_y\|_2^{\frac12}r^{\frac12}\right),\quad
  \forall r\leq|y|\leq2r.
\end{equation}
It follows from (\ref{UnifE}), $\sqrt{\varrho_0}E\in L^\infty(0,T; L^2(\mathbb R))$, $E_y\in L^2(0,T; L^2(\mathbb R))$, and
\begin{eqnarray*}
  I_2&:=&\frac1r\int_0^t\int_{r\leq|y|\leq2r}\varrho_0E^{\frac52}dyd\tau \\
  &\leq&\frac Cr\int_0^t\int_{r\leq|y|\leq2r}\left(\|\sqrt{\varrho_0}E\|_2^\frac12 +\|E_y\|_2^{\frac14}r^{\frac14}\right)\varrho_0E^2dyd\tau \\
  &\leq&\frac {Ct}{r}\sup_{0\leq s\leq t}\|\sqrt{\varrho_0}E\|_2^{\frac52}
  +\frac{Ct^{\frac78}}{r^{\frac34}}\sup_{0\leq s\leq t}\|\sqrt{\varrho_0}E\|_2^2
  \left(\int_0^t\|E_y\|_2^2d\tau\right)^{\frac18},
\end{eqnarray*}
that $I_2\rightarrow0$, as $r\rightarrow\infty$. Similarly, it follows from (\ref{UnifE}), $(\sqrt{\varrho_0}E, vv_y, \vartheta_y, E_y)\in L^2(0,T; L^2(\mathbb R))$, and
\begin{eqnarray*}
  I_3&:=&\frac1r\int_0^t\int_{r\leq|y|\leq2r}(|v||v_y|+|\vartheta_y|)Edyd\tau \\
  &\leq&\frac Cr\int_0^t\int_{r\leq|y|\leq2r}(|v||v_y|+|\vartheta_y|)\left(\|\sqrt{\varrho_0}E\|_2 +\|E_y\|_2^{\frac12}r^{\frac12}\right)dyd\tau\\
  &\leq&\frac Cr\int_0^t\left[\int_{r\leq|y|\leq2r}(v^2v_y^2+\vartheta_y^2)dy\right]^{\frac12}r^{\frac12}
  \left(\|\sqrt{\varrho_0}E\|_2 +\|E_y\|_2^{\frac12}r^{\frac12}\right)d\tau\\
  &\leq&\frac{C}{r^{\frac12}}\left(\int_0^t\|\sqrt{\varrho_0}E\|_2^2d\tau\right)^{\frac12}
  \left[\int_0^t\int_{r\leq|y|\leq 2r}(v^2v_y^2+\vartheta_y^2)dyd\tau\right]^{\frac12}\\
  &&+Ct^{\frac14}\left[\int_0^t\int_{r\leq|y|\leq2r}(v^2v_y^2+\vartheta_y^2)dyd\tau
  \right]^{\frac12}\left(\int_0^t\|E_y\|_2^2d\tau\right)^{\frac14},
\end{eqnarray*}
that $I_3\rightarrow0$, as $r\rightarrow\infty$.

Thus, taking the limit as $r\uparrow\infty$ in (\ref{L2-4}) gives
\begin{eqnarray}
  \bigg(\int_\mathbb R\varrho_0E^2 dy \bigg)(t)+\int_0^t\int_\mathbb R\frac{1}{J}
  \left(v_y^2+E_y^2+v^2v_y^2+\vartheta_y^2\right)dyd\tau \nonumber\\
  \leq C\left(1+\|\sqrt{\varrho_0}E_0\|_2^2+\int_0^t\int_\mathbb R(\varrho_0E^2+\varrho_0^2\vartheta^2v^2) dyd\tau\right).\label{L2-5}
\end{eqnarray}
By Proposition \ref{PropBasic}, one has
\begin{eqnarray*}
\int_0^t\int_\mathbb R \varrho_0^2\vartheta^2v^2dyd\tau
\leq\int_0^t\|\sqrt{\varrho_0}v\|_2^2\|
\sqrt{\varrho_0}\vartheta\|_\infty^2d\tau\leq C\int_0^t\|
\sqrt{\varrho_0}\vartheta\|_\infty^2d\tau.
\end{eqnarray*}
Therefore, it follows from (\ref{L2-5}) that
\begin{eqnarray}
  &&\|\sqrt{\varrho_0}E\|_2^2(t)
  +\int_0^t\int_\mathbb R\frac{1}{J}
  \left(v_y^2+E_y^2+v^2v_y^2+\vartheta_y^2\right)dyd\tau\nonumber\\
  &\leq& C\left(1+\|\sqrt{\varrho_0}E_0\|_2^2
  + \int_0^t\|\sqrt{\varrho_0}E\|_2^2d\tau\right)+A_2\int_0^t\|
\sqrt{\varrho_0}\vartheta\|_\infty^2d\tau,\label{EstL2-7}
\end{eqnarray}
with a positive constant $A_2$.

It remains to estimate $\|\sqrt{\varrho_0}\vartheta\|_\infty^2$ in (\ref{EstL2-7}).
Note that (\ref{HSLOW}) implies $|\varrho_0'|\leq \sqrt{\bar\varrho}K_1\varrho_0$. One can apply Lemma \ref{lem},
with $\omega=\varrho_0, f=\vartheta,$ and $\eta=J$, to obtain
\begin{equation}
  \|\sqrt{\varrho_0}\vartheta\|_\infty^2\leq
2\sqrt{\bar\varrho}K_1\|\sqrt{\varrho_0}\vartheta\|_2^2+8\|\varrho_0\|_\infty^{\frac13}
\|\varrho_0\vartheta\|_1^{\frac23}\left\|\frac{\vartheta_y}{\sqrt J}
\right\|_2^{\frac43}\|J\|_\infty^{\frac23}.\label{USEHSLOW}
\end{equation}
It follows from (\ref{USEHSLOW}), Proposition \ref{PropBasic}, and Corollary \ref{CorJlower} that
\begin{eqnarray}
\|\sqrt{\varrho_0}\vartheta\|_\infty^2(t)&\leq&
C\|\sqrt{\varrho_0}E\|_2^2+C
\left\|\frac{\vartheta_y}{\sqrt J}\right\|_2^{\frac43}\|J\|_\infty^{\frac23}\nonumber\\
&\leq&\frac{1}{4A_2}\left\|\frac{\vartheta_y}{\sqrt J}\right\|_2^2+C(\|J\|_\infty^2+\|\sqrt{\varrho_0}E\|_2^2)
\nonumber\\
&\leq&\frac{1}{4A_2}\left\|\frac{\vartheta_y}{\sqrt J}\right\|_2^2+C\left(1+\|\sqrt{\varrho_0}E\|_2^2+\int_0^t
\|\sqrt{\varrho_0}\vartheta\|_\infty^2d\tau \right),\nonumber
\end{eqnarray}
and, thus,
\begin{equation}
  2A_2\int_0^t\|\sqrt{\varrho_0}\vartheta\|_\infty^2d\tau
 \leq \frac12\int_0^t\left\|\frac{\vartheta_y}{\sqrt J}\right\|_2^2d\tau
  +C+C \int_0^t \left(\|\sqrt{\varrho_0}E\|_2^2 + \int_0^\tau
\|\sqrt{\varrho_0}\vartheta\|_\infty^2ds\right)d\tau.\label{EstL2-8}
\end{equation}

Summing (\ref{EstL2-8}) with (\ref{EstL2-7}) yields
\begin{eqnarray*}
  \|\sqrt{\varrho_0}E\|_2^2(t)
  +\int_0^t\left[\int_\mathbb R\frac{1}{J}
  \left(v_y^2+E_y^2+v^2v_y^2+\vartheta_y^2\right)dy+\|\sqrt{\varrho_0}\vartheta\|_\infty^2\right] d\tau\nonumber\\
  \leq C(1+\|\sqrt{\varrho_0}E_0\|_2^2)
  +C\int_0^t\left(\|\sqrt{\varrho_0}E\|_2^2+\int_0^\tau
\|\sqrt{\varrho_0}\vartheta\|_\infty^2ds\right)d\tau.
\end{eqnarray*}
Thus the Gronwall inequality yields
\begin{align}
  \sup_{0\leq t\leq T}\|\sqrt{\varrho_0}E\|_2^2
&+\int_0^T\left[\int_\mathbb R\frac{1}{J}
  \left(v_y^2+E_y^2+v^2v_y^2+\vartheta_y^2\right)dy+\|\sqrt{\varrho_0}\vartheta\|_\infty^2\right] dt\nonumber\\
\leq&\ \  C(1+\|\sqrt{\varrho_0}E_0\|_2^2).
\label{EstL2.2}
\end{align}
This and Corollary \ref{CorJlower} show that
\begin{equation}
  \sup_{0\leq t\leq T}\|J\|_\infty^2\leq C(1+
\|\sqrt{\varrho_0}E_0\|_2^2).\label{EstL2.3}
\end{equation}
Then the conclusion follows from (\ref{EstL2.2}) and (\ref{EstL2.3}).
\end{proof}


\subsection{$H^1$ estimates}
\label{ssecH1V}
In this subsection, we establish the $L^\infty(0,T; H^1)$ type a priori estimates for $(J, v,\vartheta)$.

We start with the $H^1$ estimate of $J$.

\begin{proposition}
\label{PropEstJy}
It holds that
\begin{eqnarray*}
  \sup_{0\leq t\leq T}\left\|\frac{J_y}{\sqrt{\varrho_0}}\right\|_2 \leq C
  \left(\left\|\frac{J_0'}{\sqrt{\varrho_0}}\right\|_2+\|\sqrt{\varrho_0}E_0\|_2\right),
\end{eqnarray*}
for a positive constant $C$ depending only on $\mu,\kappa,c_v,R, T, K, \bar\varrho, \underline J, \bar J, \|\varrho_0\|_1$, and $\mathscr E_0$.
\end{proposition}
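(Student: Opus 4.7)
The strategy is to avoid a direct energy estimate on $J_y/\sqrt{\varrho_0}$ (which would force one to confront a term of the form $\int J J_y G_y/\varrho_0\,dy$ that requires $L^2$ control on $\sqrt{\varrho_0}v_t$ not yet at hand) and instead to differentiate the explicit representation for $J$ given in Proposition \ref{PropIdJ}. Writing $J=B\,D$ with $D(y,t):=J_0(y)+\frac R\mu\int_0^t\tfrac{\varrho_0\vartheta}{B}\,d\tau$, the chain rule together with the identity $\partial_y\log B=\varrho_0(v-v_0)/\mu$ produces the pointwise formula
\begin{equation*}
\frac{J_y}{\sqrt{\varrho_0}}=\frac{\sqrt{\varrho_0}(v-v_0)}{\mu}J+B\frac{J_0'}{\sqrt{\varrho_0}}+\frac{RB}{\mu}\int_0^t\frac{1}{B}\left[\frac{\varrho_0'\vartheta}{\sqrt{\varrho_0}}+\sqrt{\varrho_0}\vartheta_y-\frac{\varrho_0^{3/2}\vartheta(v-v_0)}{\mu}\right]d\tau.
\end{equation*}
This reduces the problem to estimating the three right-hand terms in $L^2_y$ uniformly on $[0,T]$.

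To bound each piece, observe that Corollary \ref{CorJlower}, together with the bound $\sup_t\|J\|_\infty\leq C(1+\|\sqrt{\varrho_0}E_0\|_2)$ from Proposition \ref{PropEstL2-V}, furnishes the required control on $\|B\|_\infty$, $\|B^{-1}\|_\infty$, and $\|J\|_\infty$. Combined with the energy conservation in Proposition \ref{PropBasic}, the first term is controlled by $C\|J\|_\infty\bigl(\|\sqrt{\varrho_0}v\|_2+\|\sqrt{\varrho_0}v_0\|_2\bigr)$, and the second is immediately $\leq C\|J_0'/\sqrt{\varrho_0}\|_2$. For the integral term, Minkowski's integral inequality moves the $L^2_y$ norm inside the time integral, and the three inner pieces are estimated as follows: the slow-decay hypothesis (\ref{HSLOW}) (or the weaker form in Remark \ref{WEAKERSLOWDECAY}) gives $|\varrho_0'|/\sqrt{\varrho_0}\leq C\sqrt{\varrho_0}$, so $\|\varrho_0'\vartheta/\sqrt{\varrho_0}\|_2\leq C\|\sqrt{\varrho_0}\vartheta\|_2$; trivially $\|\sqrt{\varrho_0}\vartheta_y\|_2\leq\sqrt{\bar\varrho}\|\vartheta_y\|_2$; and H\"older's inequality yields $\|\varrho_0^{3/2}\vartheta(v-v_0)\|_2\leq C\|\sqrt{\varrho_0}\vartheta\|_\infty\bigl(\|\sqrt{\varrho_0}v\|_2+\|\sqrt{\varrho_0}v_0\|_2\bigr)$. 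Cauchy--Schwarz in $\tau\in[0,T]$ then converts the time integrals of the latter two into the $L^2_t$-norms $\|\vartheta_y\|_{L^2_tL^2_y}$ and $\|\sqrt{\varrho_0}\vartheta\|_{L^2_tL^\infty_y}$, both of which are finite by Proposition \ref{PropEstL2-V} with quantitative dependence on $\|\sqrt{\varrho_0}E_0\|_2$.

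The only delicate point is the term $\varrho_0'\vartheta/\sqrt{\varrho_0}$: if $\varrho_0$ were to vanish too rapidly at infinity, the weight $|\varrho_0'|/\sqrt{\varrho_0}$ would blow up at the far field and no uniform estimate would be possible. This is precisely where the slow-decay hypothesis on $\varrho_0$ is used in an essential way; everything else is careful bookkeeping. Collecting the estimates gives the asserted bound on $\sup_{0\leq t\leq T}\|J_y/\sqrt{\varrho_0}\|_2$, with the constant $C$ depending only on the parameters listed in the statement.
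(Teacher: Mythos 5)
Your proof is correct and takes essentially the same route as the paper: both differentiate the representation $J=BD$ from Proposition \ref{PropIdJ}, use the two-sided bound on $B$ from Corollary \ref{CorJlower} together with $B_y=B\varrho_0(v-v_0)/\mu$ and Proposition \ref{PropBasic} to control $B$ and $B_y$, and then close with the $L^2_t$ bounds on $\vartheta_y$ and $\sqrt{\varrho_0}\vartheta$ in $L^\infty_y$ from Proposition \ref{PropEstL2-V}. The only cosmetic difference is that you substitute $B_y/B=\varrho_0(v-v_0)/\mu$ at the outset, whereas the paper keeps $B_y$ and bounds $\|B_y\|_2$ and $\|B_y/\sqrt{\varrho_0}\|_2$ at the end; the estimates are identical.
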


\begin{proof}
Recalling Proposition \ref{PropIdJ} and the following estimate on $B$ obtained in the proof of Corollary \ref{CorJlower}
\begin{equation}\label{bdB}
\exp\left\{-\frac{2\sqrt2}{\mu}\sqrt{\|\varrho_0\|_1\mathcal E_0}\right\}\leq B
\leq \exp\left\{\frac{2\sqrt2}{\mu}\sqrt{\|\varrho_0\|_1\mathcal E_0}\right\},
\end{equation}
one can get from
\begin{eqnarray*}
  J_y=B\left[J_0'+\frac R\mu\int_0^t\left(\frac{\varrho_0'\vartheta+\varrho_0\vartheta_y}{B}
  -\frac{B_y}{B^2}\varrho_0\vartheta\right)d\tau\right]
  +B_y\left(J_0+\frac R\mu\int_0^t\frac{\varrho_0\vartheta}{B}d\tau\right),
\end{eqnarray*}
(\ref{bdB}), and (\ref{HSLOW}) that
\begin{eqnarray*}
  \left\|\frac{J_y}{\sqrt{\varrho_0}}\right\|_2&\leq&
  C\left[\left\|\frac{J_0'}{\sqrt{\varrho_0}}\right\|_2+ \int_0^t (\|\sqrt{\varrho_0}\vartheta\|_2+\|\vartheta_y\|_2+  \|B_y\|_2\|\sqrt{\varrho_0}\vartheta\|_\infty)d\tau\right]\\
  &&+\left\|\frac{B_y}{\sqrt{\varrho_0}}\right\|_2\left(\|J_0\|_\infty\
  +\int_0^t\|\sqrt{\varrho_0}\vartheta\|_\infty d\tau\right)\\
  &\leq&
  C\left[\left\|\frac{J_0'}{\sqrt{\varrho_0}}\right\|_2+ \int_0^t (\|\sqrt{\varrho_0}\vartheta\|_2+\|\vartheta_y\|_2+  \|\sqrt{\varrho_0}(v-v_0)\|_2\|\sqrt{\varrho_0}\vartheta\|_\infty)d\tau\right]\\
  &&+ \|\sqrt{\varrho_0}(v-v_0)\|_2\left(\|J_0\|_\infty\
  +\int_0^t\|\sqrt{\varrho_0}\vartheta\|_\infty d\tau\right).
\end{eqnarray*}
Then the desired conclusion follows by Proposition \ref{PropBasic} and Proposition \ref{PropEstL2-V}.
\end{proof}


Next, we carry out the estimate on the effective viscous flux $G$, which is the key to get
the corresponding $L^\infty(0,T; H^1)$ estimates of $v$ and $\vartheta$.

For simplicity of presentations, the proofs of Proposition \ref{PropEstG} and Proposition \ref{PropEsttheta} in this subsection, as well as
the uniqueness part of Theorem \ref{ThmGloUni-v} in the next one, will be given ``formally". However,
similar to the proof of Proposition \ref{PropEstL2-V}, one can
easily adopt the cut-off procedure there to justify the arguments rigorously.

\begin{proposition}
  \label{PropEstG}
It holds that
$$
  \sup_{0\leq t\leq T}\|G\|_2^2+\int_0^T\left\|\frac{G_y}{\sqrt{\varrho_0}}\right\|_2^2d\tau
\leq C,
$$
for a positive constant $C$ depending only on $\mu, \kappa, c_v, R,$ $T, K_1, \bar\varrho,\underline J,$ $\bar J,$ $\|\varrho_0\|_1$, $\mathcal E_0,$ $\|\sqrt{\varrho_0}E_0\|_2,$ and $\|G_0\|_2$, where $G$ is defined by (\ref{ExG}), and $G_0=\frac{1}{J_0}(v_0'-R\varrho_0\vartheta_0)$.
\end{proposition}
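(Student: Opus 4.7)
The plan is to mimic the corresponding energy estimate for $G$ from the no-vacuum analysis (Proposition \ref{PropEstH1-NV}), but to carry it out with weights adapted to the degeneracy of $\varrho_0$ and, as in Proposition \ref{PropEstL2-V}, to perform the integration by parts against a cut-off $\varphi_r$ and let $r\to\infty$ to justify the computation rigorously. Concretely, I would test (\ref{EqG}) with $JG\varphi_r^2$, use $J_t=v_y$ from (\ref{EqJ}) to rewrite the time derivative, and integrate by parts in the two terms coming from $\bigl(\frac{G_y}{\varrho_0}\bigr)_y$ and $\bigl(\frac{\vartheta_y}{J}\bigr)_y$. The boundary-type remainders supported in $r\leq|y|\leq 2r$ will be shown to vanish as $r\to\infty$ by the global integrability already established (in particular $\sqrt{\varrho_0}v$, $G$, $G_y/\sqrt{\varrho_0}$, and $\vartheta_y$ in the appropriate spaces), exactly in the spirit of the cut-off computation in Proposition \ref{PropEstL2-V}.

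After taking the limit, the identity will read
\begin{equation*}
\tfrac12\tfrac{d}{dt}\|\sqrt{J}G\|_2^2+\mu\Bigl\|\tfrac{G_y}{\sqrt{\varrho_0}}\Bigr\|_2^2
=(2\gamma-1)\int_{\mathbb R} vGG_y\,dy-\kappa(\gamma-1)\int_{\mathbb R}\tfrac{\vartheta_yG_y}{J}\,dy,
\end{equation*}
where I used $J_t=v_y$ to absorb one convective contribution. The two terms on the right are then estimated as follows. For the first, I would write
\begin{equation*}
\Bigl|\int vGG_y\,dy\Bigr|\leq\|G\|_\infty\|\sqrt{\varrho_0}v\|_2\Bigl\|\tfrac{G_y}{\sqrt{\varrho_0}}\Bigr\|_2,
\end{equation*}
and for the second, using the uniform positive lower and upper bounds of $J$ from Corollary \ref{CorJlower} and Proposition \ref{PropEstL2-V} together with $\varrho_0\leq\bar\varrho$,
\begin{equation*}
\Bigl|\int\tfrac{\vartheta_yG_y}{J}\,dy\Bigr|\leq C\|\vartheta_y\|_2\Bigl\|\tfrac{G_y}{\sqrt{\varrho_0}}\Bigr\|_2.
\end{equation*}
The $L^\infty$ bound on $G$ is controlled by the interpolation
\begin{equation*}
\|G\|_\infty^2\leq 2\int_{\mathbb R}|GG_y|\,dy\leq 2\sqrt{\bar\varrho}\|G\|_2\Bigl\|\tfrac{G_y}{\sqrt{\varrho_0}}\Bigr\|_2,
\end{equation*}
which, together with $\|\sqrt{\varrho_0}v\|_2^2\leq 2\mathscr E_0$ from Proposition \ref{PropBasic}, allows a Cauchy inequality to absorb the bad $\|G_y/\sqrt{\varrho_0}\|_2$ factors into the dissipation.

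The resulting differential inequality has the form
\begin{equation*}
\tfrac{d}{dt}\|\sqrt{J}G\|_2^2+\mu\Bigl\|\tfrac{G_y}{\sqrt{\varrho_0}}\Bigr\|_2^2\leq C\bigl(\|\sqrt{J}G\|_2^2+\|\vartheta_y\|_2^2\bigr),
\end{equation*}
to which Gronwall's inequality applies, using that $\int_0^T\|\vartheta_y\|_2^2\,dt$ is already controlled in Proposition \ref{PropEstL2-V} and that $\|G_0\|_2$ is given by hypothesis. Integrating in time then yields simultaneously the $L^\infty(0,T;L^2)$ bound on $G$ and the $L^2(0,T;L^2)$ bound on $G_y/\sqrt{\varrho_0}$.

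The main obstacle I anticipate is purely technical: making the cut-off argument clean enough that all boundary remainders (including those coming from the $(G_y/\varrho_0)_y$ and $(\vartheta_y/J)_y$ terms, where $\varrho_0^{-1}$ is large at the far field) can be shown to vanish in the limit $r\to\infty$. This will require repeatedly exploiting the Proposition \ref{PropEstL2-V} bounds together with Cauchy--Schwarz on the annulus $\{r\leq|y|\leq 2r\}$, in the same style as the estimates of $I_1,I_2,I_3$ in the proof of Proposition \ref{PropEstL2-V}; once these vanishing-at-infinity controls are secured, the remaining algebra is essentially the weighted version of the no-vacuum $G$-estimate already carried out in Proposition \ref{PropEstH1-NV}.
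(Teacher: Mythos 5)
Your argument is correct and follows essentially the same strategy as the paper's proof: test (\ref{EqG}) with $JG$ (with the cut-off $\varphi_r$ that the paper acknowledges is needed for rigor but carries out only ``formally'' here), absorb the $\mu(G_y/\varrho_0)_y$ dissipation, bound the heat-flux cross term by $\|\vartheta_y\|_2\|G_y/\sqrt{\varrho_0}\|_2$, and close with Gronwall using the $\int_0^T\|\vartheta_y\|_2^2\,dt$ control from Proposition \ref{PropEstL2-V}. The only divergence is cosmetic: for the convective term you integrate by parts to $(2\gamma-1)\int vGG_y\,dy$ and use $\|\sqrt{\varrho_0}v\|_2\,\|G\|_\infty\,\|G_y/\sqrt{\varrho_0}\|_2$ together with the interpolation $\|G\|_\infty^2\le C\|G\|_2\|G_y/\sqrt{\varrho_0}\|_2$ (mirroring Proposition \ref{PropEstH1-NV} and needing only Proposition \ref{PropBasic}), whereas the paper keeps $\int|v_y|G^2\,dy\le\|v_y\|_2\|G\|_4^2$ and applies Gagliardo--Nirenberg, drawing $\int_0^T\|v_y\|_2^2\,dt$ from Proposition \ref{PropEstL2-V}; both routes lead to the same differential inequality and conclusion.
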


\begin{proof}
 Taking the inner product of (\ref{EqG}) with $JG$ yields
  \begin{equation}
    \frac12\frac{d}{dt}\int_\mathbb RJG^2dy +\mu\int_\mathbb R\frac{G_y^2}{\varrho_0} dy
    \leq\frac\mu2\int_\mathbb R\frac{G_y^2}{\varrho_0} dy +C\int_\mathbb R\left(|v_y|
    G^2 +\varrho_0\vartheta_y^2 \right)dy,\label{ESTG.2}
  \end{equation}
  where Corollary \ref{CorJlower} has been used.
  By the Gagliardo-Nirenberg inequality,
  \begin{eqnarray*}
\int_\mathbb R|v_y|G^2dy&\leq&\|v_y\|_2\|G\|_4^2\leq C\|v_y\|_2\|G\|_2^{\frac32}(\|G\|_2+\|G_y\|_2)^{\frac12}\\
    &\leq&\varepsilon\left\|\frac{G_y}{\sqrt{\varrho_0}}\right\|_2^2+C_\varepsilon(1+\|v_y\|_2^2)
    \|G\|_2^2,
  \end{eqnarray*}
for any $\varepsilon>0$. Thanks to this and choosing $\varepsilon$ suitably small, one obtains from Corollary \ref{CorJlower}, Proposition \ref{PropEstL2-V}, and (\ref{ESTG.2}) that
\begin{equation*}
  \sup_{0\leq t\leq T}\|G\|_2^2+\int_0^T\left\|\frac{G_y}{\sqrt{\varrho_0}}\right\|_2^2d\tau
  \leq Ce^{C\int_0^T(1+\|v_y\|_2^2)d\tau }(1+\|G_0\|_2^2)\leq C. \label{EstG.3}
\end{equation*}
This proves the conclusion.
\end{proof}

Then, we derive the $L^\infty(0,T; H^1)$ estimate on $v$.

\begin{proposition}
  \label{PropEstH1u}
It holds that
  $$\sup_{0\leq t\leq T}\|v_y\|_2^2+\int_0^T\left(\|\sqrt{\varrho_0}v_t\|_2^2
  +\left\|\frac{v_{yy}}{\sqrt{\varrho_0}}\right\|_2^2\right)dt\leq C, $$
for a positive constant $C$ depending only on $c_v$,$R$,$\mu$,$\kappa$,$T$,$\bar\varrho$,$K_1$,$\underline J$,$\bar J$,$\|\varrho_0\|_1$,$\mathcal E_0,$ $\|\sqrt{\varrho_0}E_0\|_2$, $\|G_0\|_2$, and $\left\|\frac{J_0'}{\sqrt{\varrho_0}}\right\|_2$.
\end{proposition}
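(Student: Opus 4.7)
\smallskip

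The plan is to reduce everything to the estimates already obtained on $G$, $J$, $J_y/\sqrt{\varrho_0}$, $\vartheta$, and $\vartheta_y$, by using two algebraic identities: (i) from (\ref{EqV}) and the definition (\ref{ExG}) of $G$, one has $\varrho_0 v_t = G_y$; and (ii) from (\ref{ExG}),
$$
v_y=\frac{1}{\mu}\bigl(JG+R\varrho_0\vartheta\bigr).
$$
No new energy identity is needed; the whole proposition is essentially a bookkeeping consequence of Propositions \ref{PropEstL2-V}, \ref{PropEstJy}, and \ref{PropEstG}, together with Corollary \ref{CorJlower} and assumption (\ref{HSLOW}).

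First, I would treat the $\sqrt{\varrho_0}v_t$ term. Dividing the identity $\varrho_0 v_t = G_y$ by $\sqrt{\varrho_0}$ yields $\sqrt{\varrho_0}\,v_t = G_y/\sqrt{\varrho_0}$, so that
$$
\int_0^T\|\sqrt{\varrho_0}\,v_t\|_2^2\,dt=\int_0^T\left\|\frac{G_y}{\sqrt{\varrho_0}}\right\|_2^2 dt\leq C
$$
by Proposition \ref{PropEstG}. Next, for $\sup_{[0,T]}\|v_y\|_2$, I would use $v_y=\mu^{-1}(JG+R\varrho_0\vartheta)$ to bound
$$
\|v_y\|_2\leq C\bigl(\|J\|_\infty\|G\|_2+\sqrt{\bar\varrho}\,\|\sqrt{\varrho_0}\vartheta\|_2\bigr),
$$
which is uniformly bounded on $[0,T]$ by Corollary \ref{CorJlower}, Propositions \ref{PropEstL2-V} and \ref{PropEstG} (noting $\|\sqrt{\varrho_0}\vartheta\|_2\leq c_v^{-1}\|\sqrt{\varrho_0}E\|_2$).

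Finally, for the $v_{yy}/\sqrt{\varrho_0}$ term, differentiating the identity for $v_y$ gives
$$
\frac{v_{yy}}{\sqrt{\varrho_0}}=\frac{1}{\mu}\left(J\cdot\frac{G_y}{\sqrt{\varrho_0}}+\frac{J_y}{\sqrt{\varrho_0}}G+R\frac{\varrho_0'}{\sqrt{\varrho_0}}\vartheta+R\sqrt{\varrho_0}\,\vartheta_y\right).
$$
I would bound the four terms separately: the first by $\|J\|_\infty\|G_y/\sqrt{\varrho_0}\|_2$, integrable in time by Corollary \ref{CorJlower} and Proposition \ref{PropEstG}; the second by $\|J_y/\sqrt{\varrho_0}\|_2\|G\|_\infty$, for which I would use the interpolation already used in (\ref{Re2}), namely $\|G\|_\infty^2\le C\|G\|_2\|G_y/\sqrt{\varrho_0}\|_2$, combined with Propositions \ref{PropEstJy} and \ref{PropEstG}; the third by invoking the slow decay assumption (\ref{HSLOW}), which gives $|\varrho_0'|/\sqrt{\varrho_0}\le K_1\varrho_0$ and hence
$$
\left\|\frac{\varrho_0'}{\sqrt{\varrho_0}}\vartheta\right\|_2\leq K_1\sqrt{\bar\varrho}\,\|\sqrt{\varrho_0}\vartheta\|_2,
$$
uniformly bounded on $[0,T]$; and the fourth simply by $\sqrt{\bar\varrho}\,\|\vartheta_y\|_2$, which lies in $L^2(0,T)$ by Proposition \ref{PropEstL2-V}. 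Summing these yields the desired bound on $\int_0^T\|v_{yy}/\sqrt{\varrho_0}\|_2^2\,dt$.

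There is no real obstacle here; the only mildly delicate point is to make sure that the $L^2_t L^\infty_y$ control of $G$ needed to handle the $J_y G$ term is available, which is exactly what Proposition \ref{PropEstG} together with the interpolation (\ref{Re2}) provides. Rigorously, one should again justify these manipulations via the cut-off procedure of Proposition \ref{PropEstL2-V}, but as remarked in the paper all such arguments are now routine.
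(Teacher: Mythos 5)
Your proposal is correct and follows essentially the same route as the paper: use $\varrho_0 v_t = G_y$ and $v_y = \mu^{-1}(JG + R\varrho_0\vartheta)$, then differentiate and bound the four terms in $v_{yy}/\sqrt{\varrho_0}$ via Corollary \ref{CorJlower}, Propositions \ref{PropEstL2-V}, \ref{PropEstJy}, \ref{PropEstG}, and the slow-decay assumption (\ref{HSLOW}). The only minor divergence is cosmetic: to control the $J_y G/\sqrt{\varrho_0}$ term you invoke the weighted interpolation $\|G\|_\infty^2\le C\|G\|_2\|G_y/\sqrt{\varrho_0}\|_2$ from (\ref{Re2}), whereas the paper bounds $\|G\|_\infty$ by $\|G\|_{H^1}$ and then uses that $\int_0^T\|G\|_{H^1}^2\,dt$ is finite by Proposition \ref{PropEstG}; both reduce to the same a priori bounds and give the claim.
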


\begin{proof}
Since $v_y=\frac1\mu(JG+R\varrho_0\vartheta)$ and $\varrho_0v_t=G_y$, it follows from Corollary \ref{CorJlower}, Proposition \ref{PropEstL2-V}, and Proposition \ref{PropEstG} that
\begin{eqnarray*}
  \sup_{0\leq t\leq T}\|v_y\|_2^2+\int_0^T\|\sqrt{\varrho_0}v_t\|_2^2dt
  &=&\frac1\mu\sup_{0\leq t\leq T}\|JG+R\varrho_0\vartheta\|_2^2+\int_0^T\left\|\frac{G_y}{\sqrt{\varrho_0}}\right\|_2^2dt
  \nonumber \\
  &\leq&C\sup_{0\leq t\leq T}(\|G\|_2^2+\|\varrho_0\vartheta\|_2^2)+\int_0^T\left\|\frac{G_y}{\sqrt{\varrho_0}}\right\|_2^2dt
  \leq C.
\end{eqnarray*}
Since
\begin{equation*}
  v_{yy}
  =\frac1\mu(JG_y+J_yG
  +R\varrho_0'\vartheta+R\varrho_0\vartheta_y),
\end{equation*}
it follows from (\ref{HSLOW}), the Sobolev inequality, and
Propositions \ref{PropEstL2-V}--\ref{PropEstG} that
\begin{eqnarray*}
  \int_0^T\left\|\frac{v_{yy}}{\sqrt{\varrho_0}}\right\|_2^2dt
  &\leq&C\int_0^T\left(\left\|\left(\frac{G_y}{\sqrt{\varrho_0}},\varrho_0\vartheta,\sqrt{\varrho_0}\vartheta_y
  \right)\right\|_2^2+\left\|\frac{J_y}{\sqrt{\varrho_0}}
  \right\|_2^2\|G\|_\infty^2\right)
  dt\nonumber\\
  &\leq&C\int_0^T\left(\left\|\left(\frac{G_y}{\sqrt{\varrho_0}},\varrho_0\vartheta,\sqrt{\varrho_0}\vartheta_y
  \right)\right\|_2^2
  +\left\|\frac{J_y}{\sqrt{\varrho_0}}
  \right\|_2^2\|G\|_{H^1}^2\right)
  dt \leq C,
\end{eqnarray*}
which yields the conclusion.
\end{proof}

Finally, we give the corresponding weighted $L^\infty(0,T; H^1)$ estimates on $\vartheta$.

\begin{proposition}
  \label{PropEsttheta}
  The following estimate holds
$$
  \sup_{0\leq t\leq T}\|\sqrt{\varrho_0}\vartheta_y\|_2^2+\int_0^T \left(\|\varrho_0\vartheta_t\|_2^2+\left\|
  \left(\frac{\vartheta_y}{J}\right)_y\right\|_2^2+\|\vartheta_{yy}\|_2^2\right)dt \leq C,
$$
for a positive constant $C$ depending only on $\mu, \kappa, c_v, R, T, K_1, \bar\varrho, \underline J, \bar J,
\|\varrho_0\|_1, \mathcal E_0, \|\sqrt{\varrho_0}E_0\|_2,$ $\|G_0\|_2, \|\sqrt{\varrho_0}\vartheta_0'\|_2$, and $\left\|\frac{J_0'}{\sqrt{\varrho_0}}\right\|_2$.
\end{proposition}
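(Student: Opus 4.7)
The plan is to rewrite the temperature equation (\ref{EqTheta}), using the definition of $G$ in (\ref{ExG}), in the form
$$
c_v\varrho_0\vartheta_t-\kappa\left(\frac{\vartheta_y}{J}\right)_y=v_yG,
$$
and then test it against the singularly weighted multiplier $\varrho_0\vartheta_t$. This is the vacuum counterpart of the $\varrho_0^2\vartheta_t$ multiplier used in Proposition \ref{PropEstH1-NV}: one power of $\varrho_0$ is dropped because the target norms $\|\sqrt{\varrho_0}\vartheta_y\|_2$ and $\|\varrho_0\vartheta_t\|_2$ carry one less power of $\varrho_0$ than in the non-vacuum analogue. After integration by parts, justified by the cut-off procedure from the proof of Proposition \ref{PropEstL2-V}, and using $J_t=v_y$ to time-differentiate $\varrho_0/J$, one obtains
$$
c_v\|\varrho_0\vartheta_t\|_2^2+\frac{\kappa}{2}\frac{d}{dt}\left\|\sqrt{\frac{\varrho_0}{J}}\,\vartheta_y\right\|_2^2
=-\frac{\kappa}{2}\int_{\mathbb R}\frac{\varrho_0v_y\vartheta_y^2}{J^2}dy
-\kappa\int_{\mathbb R}\frac{\varrho_0'\vartheta_y\vartheta_t}{J}dy
+\int_{\mathbb R}\varrho_0\vartheta_tv_yG\,dy.
$$

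Next I would estimate the three right-hand side terms. The slow-decay condition (\ref{HSLOW}), $|\varrho_0'|\leq K_1\varrho_0^{3/2}$, is the decisive point here: it recasts $\varrho_0'\vartheta_y\vartheta_t$ as $\varrho_0^{3/2}|\vartheta_y||\vartheta_t|=(\sqrt{\varrho_0}\vartheta_y)(\varrho_0\vartheta_t)$, so the second term is bounded by $C\|\varrho_0\vartheta_t\|_2\|\sqrt{\varrho_0}\vartheta_y\|_2$ and absorbs into the $\|\varrho_0\vartheta_t\|_2^2$ term on the left by Young's inequality. The first term is $\leq C\|v_y\|_\infty\|\sqrt{\varrho_0}\vartheta_y\|_2^2$ and the third is $\leq\|\varrho_0\vartheta_t\|_2\|v_y\|_2\|G\|_\infty$. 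The required time-integrability of $\|v_y\|_\infty$ and $\|G\|_\infty$ comes from $v_y=\mu^{-1}(JG+R\varrho_0\vartheta)$ combined with the one-dimensional Gagliardo-Nirenberg inequality $\|G\|_\infty^2\leq C\|G\|_2\|G_y\|_2$: Propositions \ref{PropEstL2-V} and \ref{PropEstG} yield $\|G\|_\infty,\|\sqrt{\varrho_0}\vartheta\|_\infty\in L^2(0,T)$, while Proposition \ref{PropEstH1u} gives $\|v_y\|_2\in L^\infty(0,T)$. Gronwall's inequality, with initial data controlled through $\sqrt{\varrho_0}\vartheta_0'\in L^2$ from (\ref{finitemass}), then delivers
$$
\sup_{0\leq t\leq T}\|\sqrt{\varrho_0}\vartheta_y\|_2^2+\int_0^T\|\varrho_0\vartheta_t\|_2^2\,dt\leq C.
$$

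The remaining bounds follow algebraically from the equation. Writing $\kappa(\vartheta_y/J)_y=c_v\varrho_0\vartheta_t-v_yG$ and using the estimates above (together with $\|v_yG\|_2\in L^2(0,T)$ already proved) gives $(\vartheta_y/J)_y\in L^2(0,T;L^2)$. For the full second derivative I would expand $\vartheta_{yy}=J(\vartheta_y/J)_y+\vartheta_yJ_y/J$; the delicate product $\vartheta_yJ_y$ is handled by $\|\vartheta_yJ_y\|_2\leq\|\vartheta_y\|_\infty\|J_y\|_2$, combined with Gagliardo-Nirenberg $\|\vartheta_y\|_\infty^2\leq C\|\vartheta_y\|_2\|\vartheta_{yy}\|_2$ and the bound $\|J_y\|_2\leq C\|J_y/\sqrt{\varrho_0}\|_2$ from Proposition \ref{PropEstJy}; Young's inequality absorbs the resulting $\|\vartheta_{yy}\|_2^{1/2}$ factor, and since $\vartheta_y\in L^2(0,T;L^2)$ by Proposition \ref{PropEstL2-V}, one concludes $\int_0^T\|\vartheta_{yy}\|_2^2\,dt\leq C$. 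The main obstacle throughout is matching every weighted norm at exactly the right power of $\varrho_0$: without (\ref{HSLOW}) the boundary term generated by integrating against $\varrho_0$ could not be absorbed, and the scheme would break down.
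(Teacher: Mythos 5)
Your proposal is correct and follows essentially the same route as the paper. The paper organizes the energy estimate by squaring the rewritten equation $c_v\varrho_0\vartheta_t-\kappa(\vartheta_y/J)_y=v_yG$ in $L^2$ (which yields $\|\varrho_0\vartheta_t\|_2^2$ and $\|(\vartheta_y/J)_y\|_2^2$ simultaneously with the $\frac{d}{dt}\|\sqrt{\varrho_0/J}\,\vartheta_y\|_2^2$ term after integrating the cross term by parts), whereas you test against $\varrho_0\vartheta_t$ and then recover $(\vartheta_y/J)_y$ from the equation afterward; these are algebraically equivalent, and both rely in the same way on (\ref{HSLOW}) to absorb the $\varrho_0'\vartheta_t\vartheta_y$ cross term and on Propositions \ref{PropEstL2-V}, \ref{PropEstG}, \ref{PropEstH1u} for the Gronwall input. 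The one genuine difference is in the $\vartheta_{yy}$ step: the paper derives the weighted Gagliardo--Nirenberg bound $\|\sqrt{\varrho_0}\vartheta_y/J\|_\infty^2\leq C(1+\|(\vartheta_y/J)_y\|_2)$ and pairs it with $\|J_y/\sqrt{\varrho_0}\|_2$, while you use the plain bound $\|\vartheta_y\|_\infty^2\leq C\|\vartheta_y\|_2\|\vartheta_{yy}\|_2$ and absorb the half-power of $\|\vartheta_{yy}\|_2$ via Young, drawing $\int_0^T\|\vartheta_y\|_2^2\,dt<\infty$ from Proposition \ref{PropEstL2-V}. Both close; yours is marginally more elementary but requires the extra bootstrap step, while the paper's weighted inequality avoids the self-reference entirely.
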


\begin{proof}
Rewrite (\ref{EqTheta}) as
$c_v\varrho_0\vartheta_t-\kappa\left(\frac{\vartheta_y}{J}\right)_y
  =v_y G.$ Then,
\begin{eqnarray}
   -2c_v\kappa\int_\mathbb R\varrho_0\vartheta_t\left(\frac{\vartheta_y}{J}\right)_y
   dy+\int_\mathbb R\left(c_v^2\varrho_0^2\vartheta_t^2+\kappa\left|\left(\frac{\vartheta_y}{J}\right)_y\right|^2
  \right)dy =\int_\mathbb Rv_y^2G^2 dy. \label{Esttheta.1}
\end{eqnarray}
By direct calculations, one can get that
\begin{align}
  c_v\kappa\frac{d}{dt}&\int_\mathbb R\frac{\varrho_0}{J}\vartheta_y^2 dy+
  \int_\mathbb R\left(c_v^2\varrho_0^2\vartheta_t^2+\kappa^2\left|\left(\frac{\vartheta_y}{J}
  \right)_y\right|^2\right)dy\nonumber\\
  =&\ \ -c_v\kappa\int_\mathbb R\left(\frac{ \varrho_0}{J^2}v_y\vartheta_y^2+2\varrho_0'\vartheta_t\frac{\vartheta_y}{J}
    \right)dy +\int_\mathbb R v_y^2G^2 dy\nonumber \\
  \leq&\ \ \frac{c_v^2}{2}\int_\mathbb R\varrho_0^2\vartheta_t^2dy+C\int_\mathbb R(\varrho_0|v_y|\vartheta_y^2 +\varrho_0\vartheta_y^2+v_y^2G^2 )dy,\label{Esttheta.2}
\end{align}
where (\ref{HSLOW}) and Corollary \ref{CorJlower} have been used.
Then, Propositions \ref{PropEstL2-V} and \ref{PropEstG} imply
\begin{eqnarray*}
  \int_\mathbb R\varrho_0|v_y|\vartheta_y^2dy&=&\frac1\mu\int_\mathbb R
  \varrho_0|JG+R\varrho_0\vartheta|\vartheta_y^2dy\leq
 C(\|G\|_{H^1}+\|\varrho_0\vartheta\|_\infty)\|\sqrt{\varrho_0}\vartheta_y\|_2^2,
\end{eqnarray*}
and
\begin{eqnarray*}
  \int_\mathbb Rv_y^2G^2dy&=&\frac{1}{\mu^2}\int_\mathbb R
  (JG+R\varrho_0\vartheta)^2G^2dy \\
  &\leq&C\left[\|G\|_2^3(\|G\|_2+\|G_y\|_2)+\|\varrho_0\vartheta\|_\infty^2\|G\|_2^2\right]\\
  &\leq& C(1+\|G_y\|_2+\|\varrho_0\vartheta\|_\infty^2).
\end{eqnarray*}
Substituting the above two estimates into (\ref{Esttheta.2}) gives
\begin{eqnarray*}
  c_v\kappa\frac{d}{dt} \int_\mathbb R\frac{\varrho_0}{J}\vartheta_y^2 dy+\frac12
  \int_\mathbb R\left(c_v^2\varrho_0^2\vartheta_t^2+\kappa^2\left|\left(\frac{\vartheta_y}{J}
  \right)_y\right|^2\right)dy\\
  \leq C (1+\|\varrho_0\vartheta\|_\infty^2
  +\|G\|_{H^1})(1+\|\sqrt{\varrho_0}\vartheta_y\|_2^2),
\end{eqnarray*}
which, together with Corollary \ref{CorJlower} and Propositions \ref{PropEstL2-V} and \ref{PropEstG}, implies
\begin{eqnarray}
   \sup_{0\leq t\leq T}\|\sqrt{\varrho_0}\vartheta_y\|_2^2+\int_0^T \left(\|\varrho_0\vartheta_t\|_2^2+\left\|
  \left(\frac{\vartheta_y}{J}\right)_y\right\|_2^2\right)dt\nonumber\\
   \leq e^{C\int_0^T (1+\|\varrho_0\vartheta\|_\infty^2
  +\|G\|_{H^1})dt}(1+\|\sqrt{\varrho_0}\vartheta_0'\|_2^2) \leq C. \label{Esttheta.3}
\end{eqnarray}

It remains to estimate $\vartheta_{yy}$. Direct calculations show that
\begin{equation*}
  \label{Esttheta.4}
  \vartheta_{yy}
  =J\left(\frac{\vartheta_y}{J}\right)_y+\frac{J_y}{\sqrt{\varrho_0}}\frac{\sqrt{\varrho_0}}{J}
   \vartheta_y,
\end{equation*}
and
\begin{eqnarray*}
  \left\|\varrho_0\left(\frac{\vartheta_y}{J}\right)^2\right\|_\infty
  &\leq&\int_\mathbb R\left|\partial_y\left(\varrho_0\left(\frac{\vartheta_y}{J}\right)^2\right)\right|dy
  =\int_\mathbb R\left|\varrho_0'\frac{\vartheta_y^2}{J^2}+2\varrho_0\frac{\vartheta_y}{J}
  \left(\frac{\vartheta_y}{J}\right)_y\right|dy\nonumber\\
  &\leq&C\left(\|\sqrt{\varrho_0} \vartheta_y\|_2^2 +
  \|\sqrt{\varrho_0} \vartheta_y\|_2
  \left\|\left(\frac{\vartheta_y}{J}\right)_y\right\|_2\right),
\end{eqnarray*}
which gives
\begin{equation}
  \label{USE1}
  \left\|\sqrt{\varrho_0}\frac{\vartheta_y}{J}\right\|_\infty^2\leq  C\left(1+
  \left\|\left(\frac{\vartheta_y}{J}\right)_y\right\|_2 \right),
\end{equation}
where one has used Corollary \ref{CorJlower}, (\ref{HSLOW}), and (\ref{Esttheta.3}). It follows from
(\ref{Esttheta.3}), (\ref{USE1}), and Proposition \ref{PropEstJy} that
\begin{eqnarray*}
\int_0^T\|\vartheta_{yy}\|_2^2dt&\leq& C\int_0^T
  \left(\left\|\left(\frac{\vartheta_y}{J}\right)_y\right\|_2^2+
  \left\|\frac{J_y}{\sqrt{\varrho_0}}\right\|_2^2\left\|
  \frac{\sqrt{\varrho_0}}{J}\vartheta_y\right\|_\infty^2\right)dt\nonumber\\
  &\leq&C\int_0^T
\left(1+\left\|\left(\frac{\vartheta_y}{J}\right)_y\right\|_2+\left\|\left(\frac{\vartheta_y}{J}
\right)_y\right\|_2^2\right)dt \leq C.
\end{eqnarray*}
Combining this with (\ref{Esttheta.3}) yields the desired conclusion.
\end{proof}

\subsection{Global existence and uniqueness}
\label{ssecGLO} Based on the a priori estimates in the previous subsections, we are now ready to prove the following global well-posedness.

\begin{theorem}
  \label{ThmGloUni-v}
Assume that (\ref{ass-v}), (\ref{finitemass}), and (\ref{HSLOW}) hold. Then, there is a unique global solution
$(J, v, \vartheta)$ to the problem (\ref{EqJ})--(\ref{IC}), such that, for any finite $T$,
\begin{eqnarray}
&&0<J, J^{-1}
\in L^\infty(0,T; L^\infty),\quad\vartheta\geq0,\nonumber\\
&&J_{yt}, vv_y, \frac{v_{yy}}{\sqrt{\varrho_0}}, \sqrt{\varrho_0}v_t, \vartheta_y, \left(\frac{\vartheta_y}{J}\right)_y, \vartheta_{yy}, \varrho_0 \vartheta_t\in L^2(0,T; L^2(\mathbb R)), \label{line2}\\
&&J_t,
\frac{J_y}{\sqrt{\varrho_0}}, \sqrt{\varrho_0}v,\sqrt{\varrho_0}v^2, v_y, \sqrt{\varrho_0}\vartheta, \sqrt{\varrho_0}\vartheta_y\in L^\infty(0,T; L^2(\mathbb R)),\label{line3}\\
&&J-J_0, \sqrt{\varrho_0}v, {\varrho_0}\vartheta\in C([0,T]; L^2). \label{line4}
\end{eqnarray}
\end{theorem}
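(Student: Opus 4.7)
The plan is to establish global existence via a continuation argument combining the local existence result of Theorem~\ref{ThmLocV} with the a priori bounds of Corollary~\ref{CorJlower} and Propositions~\ref{PropBasic}--\ref{PropEsttheta}, which are finite up to any finite time $T$. Uniqueness will then be obtained by a weighted $L^2$ energy estimate on the difference of two solutions, with a cut-off procedure to justify the integrations by parts on $\mathbb R$.

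For the global existence step, I would let $T_{\max}$ denote the maximal time of existence of the local solution supplied by Theorem~\ref{ThmLocV}, and argue by contradiction: assuming $T_{\max}<\infty$, the a priori estimates of Section~\ref{SecGlo-v} provide, uniformly on $[0,T_{\max})$, a positive lower bound on $J$ (Corollary~\ref{CorJlower}), an $L^\infty$ upper bound on $J$ (Proposition~\ref{PropEstL2-V}), the $L^\infty_tL^2_y$ control of $\sqrt{\varrho_0}v$, $\sqrt{\varrho_0}v^2$, $v_y$, $\sqrt{\varrho_0}\vartheta$, $\sqrt{\varrho_0}\vartheta_y$, $J_y/\sqrt{\varrho_0}$, and $J_t$, together with the $L^2_tL^2_y$ control of $\sqrt{\varrho_0}v_t$, $v_{yy}/\sqrt{\varrho_0}$, $\sqrt{\varrho_0}J_{yt}$, $\varrho_0\vartheta_t$, $(\vartheta_y/J)_y$, and $\vartheta_{yy}$ (Propositions~\ref{PropEstJy}--\ref{PropEsttheta}). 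Picking $t_0<T_{\max}$ sufficiently close to $T_{\max}$, the slice $(J(\cdot,t_0),v(\cdot,t_0),\vartheta(\cdot,t_0))$ satisfies the hypotheses of Theorem~\ref{ThmLocV} with data norms controlled independently of $t_0$, so applying Theorem~\ref{ThmLocV} with initial time $t_0$ produces an extension by a \emph{fixed} time increment $\mathcal T^*$, contradicting the maximality of $T_{\max}$. Hence $T_{\max}=\infty$, and the regularities in (\ref{line2})--(\ref{line3}) follow directly from the uniform a priori estimates. The time continuity (\ref{line4}) will be inherited from $J_t=v_y\in L^\infty_tL^2_y$, $(\sqrt{\varrho_0}v)_t=G_y/\sqrt{\varrho_0}\in L^2_tL^2_y$, and $(\varrho_0\vartheta)_t=\varrho_0\vartheta_t\in L^2_tL^2_y$.

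For uniqueness, I would take two solutions $(J_i,v_i,\vartheta_i)$, $i=1,2$, sharing the initial data, and denote $\delta J=J_1-J_2$, $\delta v=v_1-v_2$, $\delta\vartheta=\vartheta_1-\vartheta_2$. Subtracting the two copies of (\ref{EqV}) and (\ref{EqTheta}), testing against $\delta v\,\varphi_r^2$ and $\delta\vartheta\,\varphi_r^2$ respectively (with the cut-off $\varphi_r$ from Proposition~\ref{PropBasic}), and exploiting the identity $\delta J=\int_0^t\delta v_y\,d\tau$ to convert $\delta J$-terms into time integrals of $\delta v_y$, the plan is to arrive at a differential inequality of the form
\[
\frac{d}{dt}\bigl(\|\sqrt{\varrho_0}\delta v\|_2^2+\|\sqrt{\varrho_0}\delta\vartheta\|_2^2\bigr)+c\bigl(\|\delta v_y\|_2^2+\|\delta\vartheta_y\|_2^2\bigr)\le C(t)\bigl(\|\sqrt{\varrho_0}\delta v\|_2^2+\|\sqrt{\varrho_0}\delta\vartheta\|_2^2\bigr),
\]
with $C(t)\in L^1(0,T)$ thanks to the previously established bounds on $\|v_y\|_\infty$, $\|G\|_\infty$, $\|\varrho_0\vartheta\|_\infty$, and $\|\sqrt{\varrho_0}\vartheta_y\|_2$. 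Gronwall's inequality will then force $\delta v\equiv\delta\vartheta\equiv0$ and consequently $\delta J\equiv0$.

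The principal obstacle will be the justification of the limits at spatial infinity: since neither $\delta v$ nor $\delta\vartheta$ lies in $L^2(\mathbb R)$ a priori (they inherit only the weighted integrability $\sqrt{\varrho_0}\delta v\in L^2$), every integration by parts must be carried out on the bounded interval $(-2r,2r)$ with a genuine cut-off, and the resulting boundary contributions near $|y|\sim r$ must be shown to vanish as $r\to\infty$ by arguments analogous to the treatment of the integrals $I_1,I_2,I_3$ in Proposition~\ref{PropEstL2-V}. A secondary subtlety is the rigorous verification that the continuation time $\mathcal T^*$ produced by Theorem~\ref{ThmLocV} really is independent of the starting time $t_0$, which requires tracking the explicit dependence of $\mathcal T$ on the data norms listed in Theorem~\ref{ThmLocV} and confirming that each such norm is uniformly bounded on $[0,T_{\max})$ by the estimates of Section~\ref{SecGlo-v}.
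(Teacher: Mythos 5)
Your global-existence strategy (continuation via the a~priori estimates of Propositions~\ref{PropBasic}--\ref{PropEsttheta} plus Theorem~\ref{ThmLocV}) is sound and is essentially what the paper does, modulo the routine bookkeeping that the norms $\|\sqrt{\varrho_0}J_y\|_2$, $\|\varrho_0^{3/2}\vartheta_y\|_2$, etc., required by Theorem~\ref{ThmLocV} are controlled by the (more singular) quantities $\|J_y/\sqrt{\varrho_0}\|_2$, $\|\sqrt{\varrho_0}\vartheta_y\|_2$ available from Section~\ref{SecGlo-v}, using $\varrho_0\le\bar\varrho$.

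The gap is in your uniqueness argument, specifically in the choice of test function for the temperature difference. You propose testing the $\delta\vartheta$-equation with $\delta\vartheta\,\varphi_r^2$, aiming at an energy $\|\sqrt{\varrho_0}\delta\vartheta\|_2^2$. Writing the difference equation as in the paper,
\[
c_v\varrho_0\,\delta\vartheta_t-\kappa\left(\frac{\delta\vartheta_y}{J_1}\right)_y
=(\varpi_1\,\delta J)_y+\varpi_2\,\delta v_y+\varpi_3\,\delta J+\varpi_4\,\varrho_0\,\delta\vartheta,
\]
with $\varpi_2=\tfrac{1}{J_1}\bigl(\mu\partial_y(v_1+v_2)-R\varrho_0\vartheta_1\bigr)$, the term $\int\varpi_2\,\delta v_y\,\delta\vartheta\,dy$ cannot be closed against $\|\sqrt{\varrho_0}\delta\vartheta\|_2$: one would need either $\delta\vartheta\in L^2$ (not available — the solution class only gives $\sqrt{\varrho_0}\delta\vartheta\in L^2$, and with far-field vacuum $\delta\vartheta$ itself may not be square-integrable), or $\varpi_2/\sqrt{\varrho_0}\in L^\infty$ (not available, since $v_y/\sqrt{\varrho_0}$ is more singular than what the bounds provide), or $\delta v_y/\sqrt{\varrho_0}\in L^2$ (not provided by the dissipation from the momentum equation, which yields only $\|\delta v_y\|_2^2$). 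The paper avoids this by testing with the \emph{$\varrho_0$-weighted} function $\varrho_0\,\delta\vartheta$, so that the energy is $\|\varrho_0\delta\vartheta\|_2^2$ with dissipation $\|\sqrt{\varrho_0}\,\delta\vartheta_y\|_2^2$; then $\int\varpi_2\,\delta v_y\,\varrho_0\delta\vartheta\,dy\le\|\varpi_2\|_\infty\|\delta v_y\|_2\|\varrho_0\delta\vartheta\|_2$ closes, and the extra commutator terms from $\partial_y(\varrho_0\delta\vartheta)=\varrho_0'\delta\vartheta+\varrho_0\delta\vartheta_y$ are absorbed using (\ref{HSLOW}). Relatedly, the paper keeps $\|\delta J\|_2^2$ as a separate member of the energy functional (testing $\delta J_t=\delta v_y$ directly), which is cleaner than your proposal to convert $\delta J$ into a time integral of $\delta v_y$ — the latter can be made to work but forces an integral-form Gronwall. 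So the decisive missing idea is the choice of the degeneracy-adapted test function $\varrho_0\delta\vartheta$; with the unweighted $\delta\vartheta$ the estimate does not close.
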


\begin{proof}
We start with the uniqueness.
Let $(J_1, v_1, \vartheta_1)$ and $(J_2, v_2, \vartheta_2)$ be two solutions to problem
(\ref{EqJ})--(\ref{IC}), satisfying the regularities in the theorem.
Set
$$
(J, v,\theta)=(J_1-J_2, v_1-v_2, \vartheta_1-\vartheta_2).
$$
Then, straightforward calculations yield
\begin{eqnarray}
  J_t=v_y,\label{DJ}\\
  \varrho_0v_t-\mu\left(\frac{v_y}{J_1}\right)_y=\left(\omega_1J+\omega_2\varrho_0\vartheta\right)_y,
  \label{Dv}\\
  c_v\varrho_0\vartheta_t-\kappa\left(\frac{\vartheta_y}{J_1}\right)_y=\left(\varpi_1J\right)_y
  +\varpi_2v_y+\varpi_3J+\varpi_4\varrho_0\vartheta,\label{Dta}
\end{eqnarray}
where
\begin{eqnarray*}
  &\omega_1:=\frac{R\varrho_0\vartheta_2-\mu\partial_yv_2}
  {J_1J_2},\qquad \omega_2:=-\frac{R}{J_1},\\
  &\varpi_1:=-\kappa \frac{\partial_y\vartheta_2}{J_1J_2},\qquad\varpi_2:=\frac{1}{J_1}(\mu\partial_y(v_1+v_2) -R\varrho_0\vartheta_1),\\
  &\varpi_3:=
  \frac{\partial_y v_2}{J_1J_2}(R\varrho_0\vartheta_2-\mu\partial_yv_2),\qquad
  \varpi_4:=-R\frac{\partial_y v_2}{J_1}.
\end{eqnarray*}

Taking the inner product of (\ref{DJ}) with $J$ yields
\begin{equation}
  \label{U.1}
  \frac{d}{dt}\int_\mathbb RJ^2 dy=\int_\mathbb Rv_yJ dy \leq
  \varepsilon\int_\mathbb R\frac{v_y^2}{J_1} dy +C_\varepsilon\int_\mathbb RJ_1J^2 dy,
\end{equation}
for any positive $\varepsilon>0$. Taking the inner product of (\ref{Dv}) with $v$ leads to
\begin{eqnarray*}
  \frac12\frac{d}{dt}\int_\mathbb R\varrho_0v^2 dy +\mu\int_\mathbb R\frac{v_y^2}{J_1} dy
  &\leq& C\int_\mathbb R(|\omega_1||J|+|\omega_2|\varrho_0|\vartheta|) |v_y| dy\\
  &\leq&\frac\mu2\int_\mathbb R\frac{v_y^2}{J_1} dy+C\int_\mathbb R  (\omega_1^2J^2+\omega_2^2\varrho_0^2\vartheta^2) .
\end{eqnarray*}
Therefore,
\begin{equation}
  \frac{d}{dt}\int_\mathbb R\varrho_0v^2 dy +\mu\int_\mathbb R\frac{v_y^2}{J_1} dy
  \leq C\int_\mathbb R ( \omega_1^2+\omega_2^2)( J^2+\varrho_0^2\vartheta^2) dy.
  \label{U.2}
\end{equation}
Taking the inner product of (\ref{Dta}) with $\varrho_0\vartheta$ and using (\ref{HSLOW}), one can get
\begin{eqnarray*}
  &&\frac{c_v}{2}\frac{d}{dt}\int_\mathbb R\varrho_0^2\vartheta^2 dy +\kappa\int_\mathbb R
  \varrho_0\frac{\vartheta_y^2}{J_1} dy \\
  &\leq&C\int_\mathbb R\frac{|\vartheta_y|}{J_1} \varrho_0^{\frac32}|\vartheta| dy +C\int_\mathbb R|\varpi_1||J|
  (\varrho_0^{\frac32}|\vartheta| +\varrho_0|\vartheta_y|)dy \\
  &&+C\int_\mathbb R(|\varpi_2||v_y|+|\varpi_3||J|
  +|\varpi_4|\varrho_0|\vartheta|)\varrho_0|\vartheta| dy\\
  &\leq&\frac\kappa2\int_\mathbb R\varrho_0\frac{\vartheta_y^2}{J_1} dy+C\int_\mathbb R (\varrho_0^2\vartheta^2+|\varpi_1|^2\varrho_0J^2) dy+\varepsilon\int_\mathbb R\frac{v_y^2}{J_1} dy\\
  && +C_\varepsilon\int_\mathbb R\varpi_2^2
  \varrho_0^2\vartheta^2 dy+C\int_\mathbb R(|\varpi_3|+|\varpi_4|)(J^2+\varrho_0^2\vartheta^2) dy,
\end{eqnarray*}
which yields
\begin{eqnarray}
  c_v\frac{d}{dt}\int_\mathbb R\varrho_0^2\vartheta^2 dy
   +\kappa\int_\mathbb R\varrho_0\frac{\vartheta_y^2}{J_1} dy
  &\leq&2\varepsilon\int_\mathbb R\frac{v_y^2}{J_1} dy+C_\varepsilon\int_\mathbb R
  (1+ \varrho_0 \varpi_1 ^2+\varpi_2^2\nonumber\\
  &&+|\varpi_3|+|\varpi_4|) (J^2+\varrho_0^2\vartheta^2)dy.\label{U.3}
\end{eqnarray}

It follows from (\ref{U.1})--(\ref{U.3}) and choosing $\varepsilon$ sufficiently small that
\begin{eqnarray}
  &&\frac{d}{dt}\int_\mathbb R\left(J^2+\varrho_0v^2+c_v\varrho_0^2\vartheta^2\right) dy
  +\int_\mathbb R\frac{1}{J_1}\left(\frac\mu2 v_y^2+\kappa\varrho_0\vartheta_y^2\right)dy \nonumber\\
  &\leq&C(1+\|(\omega_1,\omega_2,\sqrt{\varrho_0}\varpi_1,
  \varpi_2)\|_\infty^2+\|(\varpi_3,\varpi_4)\|_\infty)\|(J,\sqrt{\varrho_0}v,\varrho_0\vartheta)\|_2^2.\nonumber
\end{eqnarray}
Thanks to this and that
$$
\omega_1, \omega_2, \sqrt{\varrho_0}\varpi_1, \varpi_2\in L^2(0,T; L^\infty(\mathbb R))\quad\mbox{and}\quad
\varpi_3, \varpi_4\in L^1(0,T; L^\infty(\mathbb R)),
$$
which can be easily verified by the regularities of $(J_i,v_i,\vartheta_i)$, $i=1,2$,
the uniqueness follows by the Gronwall inequality.

Next we prove the global existence. The local existence of solutions in the class stated in the theorem follows from Theorem \ref{ThmLocV} and Propositions \ref{PropEstJy}, \ref{PropEstH1u}, and \ref{PropEsttheta}. Note that the regularities $J_t\in L^\infty(0,T; L^2)$ and $J_{yt}\in L^2(0,T; L^2)$ follow directly from Proposition \ref{PropEstH1u} and equation (\ref{EqJ}), while the regularities in (\ref{line4}) follow from those in (\ref{line2})--(\ref{line3}). The global existence is then the corollary of the local existence and uniqueness and the a priori estimates obtained in Propositions \ref{PropBasic}--\ref{PropEsttheta}.
This completes the proof of Theorem \ref{ThmGloUni-v}.
\end{proof}

\section{Uniform lower bound of the entropy}
\label{SecLowEty}
In this section, we establish the uniform lower bound for the entropy. This is
proved by a De Giorgi type iteration which will be carried out for a suitably modified entropy equation.
To this end, we assume that (\ref{ass-v}), (\ref{finitemass}), and (\ref{HSLOW}) hold, and the initial entropy is bounded from below. Furthermore, we require that
\begin{equation}
  |\varrho_0''|\leq K_2\varrho_0^2,\quad\mbox{ on }\mathbb R,
  \label{HSLOW2}
\end{equation}
with any given positive constant $K_2$. Let $(\varrho, v, \vartheta)$ be the unique global solution guaranteed by Theorem \ref{ThmGloUni-v} (for this section and the next one).

Set
\begin{eqnarray}
&\underline\ell_0:=\log\left(\frac{\min\left\{1,\frac ARe^{\frac{\underline s_0}{c_v}}\right\}}{\max\left\{1,2^{\gamma-2}\right\}}\right), \label{lell0}\\
  &\underline J_T:=\inf_{(y,t)\in\mathbb R\times(0,T)}J(y,t),\quad\bar J_T:=\sup_{(y,t)\in\mathbb R\times(0,T)}J(y,t),\label{JT}\\
  &\mathcal Z_J(T):=\sup_{0\leq t\leq T}\left(\left\|\varrho_0^{-\frac12}J_y\right\|_2^2+\|\sqrt{\varrho_0}\vartheta\|_2^2
  \right),\label{ZJT}
\end{eqnarray}
where $\underline s_0:=\inf_{y\in\mathbb R}s_0(y).$

Due to (\ref{Entropy}) and that $J$ is uniformly positive, to get a uniform lower bound for $s$, it suffices to
obtain that for $\log\vartheta-(\gamma-1)\log\varrho_0$. For $\varepsilon\in(0,1)$, set
\begin{equation*}
  S_\varepsilon:=\log\vartheta_\varepsilon-(\gamma-1)\log\tilde\varrho_\varepsilon,\quad\mbox{with }
  \vartheta_\varepsilon=\vartheta+\varepsilon\mbox{ and } \tilde\varrho_\varepsilon=\varrho_0+\varepsilon^{\frac{1}{\gamma-1}}. \label{Sepsilon'}
\end{equation*}
Then, by direct calculations,
\begin{equation}
c_v\varrho_0\partial_t S_\varepsilon-\kappa\partial_y\left(\frac{\partial_yS_\varepsilon}{J}\right)
=\kappa(\gamma-1)\left(\frac1J\left(\frac{\varrho_0'}{\tilde\varrho_\varepsilon}\right)'
  -\frac{\varrho_0'J_y}{\tilde\varrho_\varepsilon J^2}\right)
-\frac{R^2}{4\mu}\frac{\varrho_0^2\vartheta^2}{J\vartheta_\varepsilon}+H_\varepsilon.
\label{EqSepsilon-1}
\end{equation}
where $H_\varepsilon=\frac{\mu}{J\vartheta_\varepsilon}\left(v_y-\frac{R}{2\mu}\varrho_0\vartheta\right)^2
+\kappa\frac{|\partial_y\vartheta_\varepsilon|^2}
{J\vartheta_\varepsilon^2}.$
Define
\begin{equation}
  \label{Sepsilon}
  s_\varepsilon:=S_\varepsilon+\underline M_Tt
\end{equation}
with
\begin{equation}
  \label{LMT}
  \underline M_T:=\frac{\kappa(\gamma-1)}{c_v\underline J_T}
  \left(K_1^2+K_2\right).
\end{equation}
Then, it follows from (\ref{EqSepsilon-1}) that
\begin{equation}
  c_v\varrho_0\partial_ts_\varepsilon-\kappa\partial_y\left(\frac{\partial_ys_\varepsilon}{J}\right)
  = -\kappa(\gamma-1)\frac{\varrho_0'J_y}{\tilde\varrho_\varepsilon J^2}-\frac{R^2}{4\mu}\frac{\varrho_0^2\vartheta^2}{J\vartheta_\varepsilon}+\widetilde H_\varepsilon, \label{EqSepsilon}
\end{equation}
where
$\widetilde H_\varepsilon=H_\varepsilon+c_v\underline M_T\varrho_0+\kappa(\gamma-1) \frac1J\left(\frac{\varrho_0'}{\tilde\varrho_\varepsilon}\right)'\geq 0.$
The nonnegativity of $\widetilde H_\varepsilon$
can be verified easily. Indeed, since $\tilde\varrho_\varepsilon>\varrho_0$, it follows from (\ref{HSLOW}) and (\ref{HSLOW2})
that
\begin{eqnarray*}
\left|\frac1J\left(\frac{\varrho_0'}{\tilde\varrho_\varepsilon}
  \right)'\right|
  \leq \frac{\varrho_0}{\underline J_T} \left(\left|\frac{\varrho_0''}{\varrho_0^2}
  \right|+\left|\frac{\varrho_0'}{\varrho_0^{\frac32}}\right|^2\right)
  \leq \frac{\varrho_0}{\underline J_T} (K_1^2+K_2)
  =\frac{c_v\underline M_T}{\kappa(\gamma-1)}\varrho_0.
\end{eqnarray*}
Thus, $\kappa(\gamma-1)\frac1J\left(\frac{\varrho_0'}{\tilde\varrho_\varepsilon}\right)'
  +c_v\underline M_T\varrho_0\geq0.$
This and $H_\varepsilon\geq0$ imply that $\widetilde H_\varepsilon\geq0$.

Now, we are going to derive an uniform lower bound for $s_\varepsilon$, independent of $\varepsilon$, which will be achieved by using a De Giorgi type iteration. To this end, as a preparation, we state the following iterative lemma whose proof is given in the Appendix.

\begin{lemma}
  \label{lemiteration}
Let $m_0\in[0,\infty)$ be given and $f$ be a nonnegative non-increasing function on $[m_0,\infty)$ satisfying
$$
f(\ell)\leq\frac{M_0(\ell+1)^\alpha}{(\ell-m)^\beta}f^\sigma(m),\quad\forall\ell>m\geq m_0,
$$
for some nonnegative constants $M_0, \alpha, \beta,$ and $\sigma$, with $0\leq\alpha<\beta$ and $\sigma>1$.
Then,
$$f(m_0+d)=0,$$
where
$$d=\left[2 f^\sigma(m_0) (m_0+M_0+2)^{\frac{2\alpha+2\beta+1}{\sigma-1}+\frac{\beta}{(\sigma-1)^2}+2\alpha+\beta+1}\right]^{\frac{1}{\beta-\alpha}}+2.
$$
\end{lemma}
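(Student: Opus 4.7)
The plan is to apply a De Giorgi / Stampacchia type level--set iteration. Introduce the sequence $\ell_k := m_0 + d(1-2^{-k})$, so that $\ell_0=m_0$, $\ell_k\uparrow m_0+d$, and $\ell_{k+1}-\ell_k = d\,2^{-(k+1)}$; set $a_k := f(\ell_k)$. Since $f$ is non-increasing, it suffices to show $a_k\to 0$, whence $f(m_0+d)\leq \lim_k a_k = 0$. Applying the hypothesis with $\ell=\ell_{k+1}$, $m=\ell_k$ and using the uniform bound $\ell_{k+1}+1\leq m_0+d+1$ produces the key recursion
\begin{equation*}
a_{k+1} \;\leq\; \frac{M_0\,(m_0+d+1)^\alpha}{d^\beta}\,2^{\beta(k+1)}\,a_k^{\sigma} \;=:\; C_d\,B^{k+1}\,a_k^\sigma,
\end{equation*}
with $B:=2^\beta$ and $C_d := M_0(m_0+d+1)^\alpha d^{-\beta}$.

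I would then prove by induction on $k$ that $a_k\leq a_0\,\lambda^{-k}$ for a suitable $\lambda>1$. The induction step reduces to the inequality $\lambda^{k(\sigma-1)-1}\geq C_d\, B^{k+1}\, a_0^{\sigma-1}$; once one chooses $\lambda$ with $\lambda^{\sigma-1}\geq B$, the $k$--dependence on both sides cancels and the whole family collapses to the single requirement
\begin{equation*}
d^\beta \;\geq\; M_0\,(m_0+d+1)^\alpha\cdot 2^{\,\beta+\beta/(\sigma-1)}\cdot f(m_0)^{\sigma-1}.
\end{equation*}
Imposing $d\geq m_0+2$ (which accounts for the ``$+2$'' correction in the statement), one has $(m_0+d+1)^\alpha\leq d^\alpha(m_0+M_0+2)^\alpha$, so the displayed inequality becomes a pure lower bound on $d^{\beta-\alpha}$. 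To collapse the two cases $f(m_0)\leq 1$ and $f(m_0)>1$ into a single formula, note that in the first case the ``$+2$'' already suffices, whereas in the second $f(m_0)^{\sigma-1}\leq f(m_0)^\sigma$, so replacing the former by the latter only enlarges the required $d$. Absorbing all numerical constants (the $M_0$ factor, $2^{\beta+\beta/(\sigma-1)}$, and the leftover polynomial factors coming from $d\geq m_0+2$) into a single weight of the form $(m_0+M_0+2)^{\text{exponent}}$ yields exactly the explicit formula for $d$ displayed in the lemma. The induction then gives $a_k\leq a_0\lambda^{-k}\to 0$, and monotonicity of $f$ concludes.

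The main obstacle is purely bookkeeping: balancing the geometric factor $B^{k+1}$ against the superlinear $a_k^\sigma$ to produce a geometric decay envelope, while simultaneously absorbing the weight $(\ell+1)^\alpha$ into powers of $d$ and consolidating all explicit constants into the single polynomial factor $(m_0+M_0+2)^{\,\bullet}$ with the specific exponent appearing in the statement. Conceptually the argument is the classical Ladyzhenskaya--Uraltseva / Stampacchia iteration; the only novelty relative to the textbook version is the nontrivial weight $(\ell+1)^\alpha$, which is controlled uniformly by the a priori inclusion $\ell_{k+1}\leq m_0+d$.
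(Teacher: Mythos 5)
Your iteration---the geometric-decay ansatz $a_k\leq a_0\lambda^{-k}$ in the Ladyzhenskaya--Uraltseva style---is a legitimate alternative to the paper's scheme, which instead normalizes via $M^{ak+b}f(\ell_k)$ and obtains doubly-exponential decay $f(\ell_{k+1})\leq(1/2)^{\sigma^k}$. The conceptual skeleton is fine, and the induction step reduction you write down is correct. The gap is in the final step, which you dismiss as ``bookkeeping'' but which is not. Your iteration produces the smallness requirement
\[
d^{\beta-\alpha}\ \geq\ \tilde C\, f^{\sigma-1}(m_0),\qquad \tilde C\ \lesssim\ M_0\,(m_0+M_0+2)^\alpha\,2^{\beta+\beta/(\sigma-1)},
\]
which features $f^{\sigma-1}(m_0)$, \emph{not} $f^\sigma(m_0)$, and a polynomial factor whose exponent ($\approx 1+\alpha+\beta+\beta/(\sigma-1)$) is strictly smaller than the lemma's exponent $E=\frac{2\alpha+2\beta+1}{\sigma-1}+\frac{\beta}{(\sigma-1)^2}+2\alpha+\beta+1$. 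Your proposed reconciliation is incorrect: for $f(m_0)\leq 1$ one has $f^\sigma(m_0)\leq f^{\sigma-1}(m_0)$, so substituting $f^\sigma$ for $f^{\sigma-1}$ \emph{shrinks} (does not enlarge) the candidate $d$, and the assertion that ``the $+2$ already suffices'' is false in general---it would require $2^{\beta-\alpha}\geq\tilde C\,f^{\sigma-1}(m_0)$, which fails e.g.\ for $\alpha=0$, $\beta=1$, $\sigma=2$, $M_0=1000$, $f(m_0)=1/2$.

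The lemma's $d$ does in fact satisfy your $f^{\sigma-1}$-condition, but proving so requires a separate nontrivial estimate exploiting $E(\sigma-1)-\sigma\bigl(1+\alpha+\beta+\tfrac{\beta}{\sigma-1}\bigr)=\alpha\sigma\geq 0$, together with a dichotomy on whether $f(m_0)$ is above or below $\tilde C/(2M^E)$. The paper circumvents this completely by performing the \emph{first} iteration step separately: it keeps the factor $d_0^{-\beta}$ coming from $(\ell_1-\ell_0)^{-\beta}$, obtains $f(\ell_1)\lesssim d_0^{-(\beta-\alpha)}f^\sigma(m_0)$, and imposes the smallness condition on $f(\ell_1)$ only; this is what produces $f^\sigma(m_0)$ and the stated exponent directly. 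That one-step anchoring is the idea missing from your argument.
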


\subsection{$L^2$ estimate on $s_\varepsilon$}
\label{ssecL2s}
The following $L^2$ energy inequality holds for $s_\varepsilon$.

\begin{proposition}
 \label{PropEstL2S}
Let $s_\varepsilon$ be defined as (\ref{Sepsilon}). Then, it holds that
\begin{align*}
 \sup_{0\leq t\leq T}&\|(s_\varepsilon-\ell)_-\|_2^2+\int_0^T\left\|\frac{\partial_y (s_\varepsilon-\ell)_-}{\sqrt{\varrho_0}}
\right\|_2^2dt\\
 \leq&\ \   C\int_0^T\int_\mathbb R\left(\left|\frac{J_y}{\sqrt{\varrho_0}}\right|^2+\varrho_0^2\vartheta^2
\right)\Bigg|_{\{s_\varepsilon<\ell\}}dydt,
\end{align*}
for any $\ell\leq\underline\ell_0$, where
$\underline\ell_0$ is given by (\ref{lell0}),
and $C$ is a positive constant depending only on $R,\gamma,\kappa,\mu,\underline J_T,\bar J_T, T$, and $K_1$.
\end{proposition}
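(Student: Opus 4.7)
The approach is to carry out a singularly weighted $L^2$ energy estimate for the modified entropy $s_\varepsilon$, taking as test function $-(s_\varepsilon-\ell)_-/\varrho_0$, as signaled in the Introduction. Writing $\phi:=(s_\varepsilon-\ell)_-$ and $\chi:=\chi_{\{s_\varepsilon<\ell\}}$, one has $\partial_y\phi=-\partial_y s_\varepsilon\cdot\chi$. Multiplying (\ref{EqSepsilon}) by $-\phi/\varrho_0$ and integrating in $y$, the time term yields $\tfrac{c_v}{2}\tfrac{d}{dt}\|\phi\|_2^2$, and integration by parts on the diffusion term produces the good dissipation $\kappa\int|\partial_y\phi|^2/(J\varrho_0)\,dy$ together with a cross term $-\int\kappa\,\partial_y\phi\cdot\phi\,\varrho_0'/(J\varrho_0^2)\,dy$. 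Exactly here the slow-decay assumption (\ref{HSLOW}) intervenes, via $|\varrho_0'|/\varrho_0^2\leq K_1/\sqrt{\varrho_0}$, so that Young's inequality absorbs the cross term into the dissipation at the price of $C\|\phi\|_2^2$.

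On the right-hand side of (\ref{EqSepsilon}), the term $\widetilde H_\varepsilon\geq 0$ is multiplied by the nonpositive factor $-\phi/\varrho_0$, so its contribution has the favorable sign and can simply be dropped. The term $-R^2\varrho_0^2\vartheta^2/(4\mu J\vartheta_\varepsilon)$ gives $R^2\varrho_0\vartheta^2\phi/(4\mu J\vartheta_\varepsilon)\leq C\varrho_0\vartheta\,\phi$ using $\vartheta/\vartheta_\varepsilon\leq 1$ and $J\geq\underline J_T$, while the term $-\kappa(\gamma-1)\varrho_0' J_y/(\tilde\varrho_\varepsilon J^2)$ is bounded by $C|J_y|\phi/\sqrt{\varrho_0}$ via $|\varrho_0'|/(\tilde\varrho_\varepsilon\varrho_0)\leq K_1/\sqrt{\varrho_0}$. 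Both are supported in $\{\phi>0\}=\{s_\varepsilon<\ell\}$, and Young's inequality splits them into a $C\|\phi\|_2^2$ piece plus $C\int(\varrho_0^2\vartheta^2+|J_y|^2/\varrho_0)\chi\,dy$. Combining yields
\[
\frac{d}{dt}\|\phi\|_2^2+c\left\|\frac{\partial_y\phi}{\sqrt{\varrho_0}}\right\|_2^2\leq C\|\phi\|_2^2+C\int_{\mathbb R}\!\left(\Big|\frac{J_y}{\sqrt{\varrho_0}}\Big|^2+\varrho_0^2\vartheta^2\right)\chi\,dy.
\]
The choice (\ref{lell0}) of $\underline\ell_0$ is exactly what ensures $S_\varepsilon(\cdot,0)\geq\underline\ell_0\geq\ell$ (using $s_0\geq\underline s_0$ together with the bounds on $J_0$ and the inequality $(a+b)^{\gamma-1}\leq\max\{1,2^{\gamma-2}\}(a^{\gamma-1}+b^{\gamma-1})$); therefore $\phi(\cdot,0)\equiv 0$, and a Gronwall argument on the above differential inequality delivers the claimed bound.

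The main obstacle is the rigorous justification of the above computation, since neither the singular test function $\phi/\varrho_0$ nor the quantities $\partial_y s_\varepsilon$, $\phi$ are a priori globally integrable, and $\varrho_0$ vanishes at the far field. To handle this, I would introduce a spatial cut-off $\eta_R(y):=\eta(y/R)$ with $\eta\in C_c^\infty(\mathbb R)$, $\eta\equiv 1$ on $[-1,1]$, $0\leq\eta\leq 1$, $|\eta'|\leq 2$, and use $-\phi\,\eta_R^2/\varrho_0$ as the actual test function. The commutator error terms involve the annulus $\{R\leq|y|\leq 2R\}$ and factors of $\eta_R\eta_R'/R$; I would show they vanish as $R\to\infty$ by exploiting the integrability of $J_y/\sqrt{\varrho_0}$, $\sqrt{\varrho_0}\vartheta$, and $\partial_y s_\varepsilon/\sqrt J$ from Theorem~\ref{ThmGloUni-v} (the latter via $\partial_y s_\varepsilon=\vartheta_y/\vartheta_\varepsilon-(\gamma-1)\varrho_0'/\tilde\varrho_\varepsilon$ and the $\varepsilon$-positivity of $\vartheta_\varepsilon,\tilde\varrho_\varepsilon$). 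If the $L^2$-integrability of $\phi$ is not yet available, a preliminary truncation $\phi\wedge N$ is used, with $N\to\infty$ after $R\to\infty$ via monotone convergence. Once the energy identity is established, the Gronwall step and recovery of the dissipation norm proceed as sketched.
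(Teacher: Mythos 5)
Your formal energy estimate agrees with the paper's: the test function $-\phi/\varrho_0$ (with $\phi=(s_\varepsilon-\ell)_-$), the sign of $\widetilde H_\varepsilon$, the use of (\ref{HSLOW}) to convert $|\varrho_0'|/\varrho_0^2$ and $|\varrho_0'|/(\tilde\varrho_\varepsilon\varrho_0)$ into $K_1/\sqrt{\varrho_0}$, and the verification of $(s_\varepsilon-\ell)_-|_{t=0}\equiv0$ all match. The gap is in the rigorous justification.

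You propose to regularize only with the spatial cut-off $\eta_R$ (plus an optional truncation $\phi\wedge N$), and let $R\to\infty$. But the cut-off commutator arising from the diffusion term, after Cauchy--Schwarz against the dissipation $\kappa\int\frac{|\partial_y\phi|^2\eta_R^2}{J\varrho_0}\,dy$, is
$$
C\int\frac{\phi^2|\eta_R'|^2}{\varrho_0}\,dy
\;\lesssim\;\frac{1}{R^2}\int_{R\le|y|\le2R}\frac{\phi^2}{\varrho_0}\,dy.
$$
The hypothesis (\ref{HSLOW}) enforces \emph{slow} decay: $\frac{d}{dy}\varrho_0^{-1/2}\le K_1/2$, so $\varrho_0(y)\gtrsim\langle y\rangle^{-2}$, i.e.\ $1/\varrho_0$ can grow like $|y|^2$, and the integral $\int_{R\le|y|\le2R}\varrho_0^{-1}\,dy$ is of order $R^3$. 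Since $\phi\le A_{\ell,\varepsilon}$ is only bounded (for fixed $\varepsilon$) and has no a priori quantitative decay, the error term behaves like $A_{\ell,\varepsilon}^2\,R$ and \emph{blows up} as $R\to\infty$. Truncating $\phi$ by $N$ does not help, since the obstruction is the factor $1/\varrho_0$, not $\phi$; nor does appealing to $J_y/\sqrt{\varrho_0}\in L^2$, $\sqrt{\varrho_0}\vartheta\in L^2$, or $\partial_y s_\varepsilon\in L^2$: these only give qualitative $\to0$ on the annulus, not fast enough to beat the $R^{3/2}$ growth from $\bigl(\int_{R\le|y|\le2R}\varrho_0^{-2}\bigr)^{1/2}/R$.

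The paper resolves exactly this difficulty by introducing a second regularization $\varrho_\delta:=\varrho_0+\delta$ and taking the test function $-\frac{\varrho_0}{\varrho_\delta^2}(s_\varepsilon-\ell)_-\varphi_r^2$. The crucial point is that $\frac{\varrho_0}{\varrho_\delta^2}\le\frac{\|\varrho_0\|_\infty}{\delta^2}$ is \emph{bounded}, so the corresponding error term is
$$
\int_0^t\int_\mathbb{R}\frac{\varrho_0}{\varrho_\delta^2}|(s_\varepsilon-\ell)_-|^2|\varphi_r'|^2\,dy\,d\tau
\;\le\;\frac{C A_{\ell,\varepsilon}^2\,\|\varrho_0\|_\infty\,t}{\delta^2\,r}\;\to\;0\quad\text{as }r\to\infty,
$$
and then one sends $\delta\downarrow0$ by monotone convergence, since $\frac{\varrho_0}{\varrho_\delta^2}\uparrow\frac{1}{\varrho_0}$. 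Without this $\delta$-regularization your limiting argument does not close, so the step "show they vanish as $R\to\infty$" is a genuine gap in the proof.
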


\begin{proof}
For $\delta>0$, set $\varrho_\delta=\varrho_0+\delta$.
Testing (\ref{EqSepsilon}) with $-\frac{\varrho_0}{\varrho_\delta^2}(s_\varepsilon-\ell)_-\varphi_r^2$ and recalling $\widetilde H_\varepsilon\geq0$, one obtains
\begin{eqnarray}
  \frac{c_v}{2}\frac{d}{dt}\int_\mathbb R\frac{\varrho_0^{2}}{\varrho_\delta^2}|(s_\varepsilon
  -\ell)_-|^2\varphi_r^2dy+\kappa\int_\mathbb R\partial_y\left(\frac{\partial_ys_\varepsilon}{J}\right)
  \frac{\varrho_0}{\varrho_\delta^2}(s_\varepsilon-\ell)_-\varphi_r^2dy \nonumber\\
  \leq\int_\mathbb R\left(\kappa(\gamma-1)\frac{\varrho_0'J_y}{\tilde\varrho_\varepsilon J^2}+\frac{R^2}{4\mu}\frac{\varrho_0^2\vartheta^2}{J\vartheta_\varepsilon}\right)
  \frac{\varrho_0}{\varrho_\delta^2}(s_\varepsilon-\ell)_-\varphi_r^2dy.
\label{EstSe1}
\end{eqnarray}
Integration by parts and using the Cauchy inequality yield
\begin{eqnarray}
  &&\int_\mathbb R\partial_y\left(\frac{\partial_ys_\varepsilon}{J}\right)\frac{\varrho_0}{\varrho_\delta^2}
  (s_\varepsilon-\ell)_-\varphi_r^2dy \nonumber\\
  &=&\int_\mathbb R\frac{\varrho_0}{J\varrho_\delta^2}|\partial_y(s_\varepsilon-\ell)_-|^2\varphi_r^2 dy
  +2\int_\mathbb R\frac{\varrho_0}{J\varrho_\delta^2}\partial_y(s_\varepsilon-\ell)_-
  (s_\varepsilon-\ell)_-\varphi_r\varphi_r'dy \nonumber\\
  &&+\int_\mathbb R\frac{\partial_y(s_\varepsilon-\ell)_-}{J}\frac{\varrho_0 \varrho_0'}{
  \varrho_\delta^ 2}\left(\frac{1}{\varrho_0}-\frac{ 2}{\varrho_\delta}\right)(s_\varepsilon-
  \ell)_-\varphi_r^2dy\nonumber \\
  &\geq&
  \frac34\int_\mathbb R\frac{\varrho_0 }{J\varrho_\delta^ 2}|\partial_y(s_\varepsilon-\ell)_-|^2\varphi_r^2 dy
  -C\int_\mathbb R\frac{\varrho_0 }{\varrho_\delta^ 2}|(s_\varepsilon-\ell)_-|^2|\varphi_r'|^2 dy\nonumber\\
  &&-C\int_\mathbb R\frac{\varrho_0^{2}}{\varrho_\delta^ 2}|
  (s_\varepsilon-\ell)_-|^2\varphi_r^2 dy,
  \label{EstSe1-1}
\end{eqnarray}
where (\ref{HSLOW}) has been used.
Note that
$\tilde\varrho_\varepsilon>\varrho_0$ and $\frac{\vartheta}{\vartheta_\varepsilon}\leq1$.
It follows from (\ref{HSLOW}) that
\begin{eqnarray}
&&\int_\mathbb R
  \left(\kappa(\gamma-1)\frac{\varrho_0'J_y}{\tilde\varrho_\varepsilon J^2}+\frac{R^2}{4\mu}\frac{\varrho_0^2\vartheta^2}{J\vartheta_\varepsilon}\right)
  \frac{\varrho_0 }{\varrho_\delta^ 2}(s_\varepsilon-\ell)_-\varphi_r^2dy\nonumber\\
&\leq&C\int_\mathbb R
  \left(\left|\frac{J_y}{\sqrt{\varrho_0}}\right|+\varrho_0\vartheta\right)
  \frac{\varrho_0^{2}}{\varrho_\delta^ 2}(s_\varepsilon-\ell)_-\varphi_r^2dy\nonumber\\
&\leq&C\int_\mathbb R\frac{\varrho_0^{2}}{ \varrho_\delta^ 2}
\left[\left(\left|\frac{J_y}{\sqrt{\varrho_0}}\right|^2+\varrho_0^2\vartheta^2\right)
\bigg|_{\{s_\varepsilon<\ell\}}+|(s_\varepsilon-\ell)_-|^2\right]{\varphi_r^2}dy.
\label{EstSe1-3}
\end{eqnarray}
Substituting (\ref{EstSe1-1}) and (\ref{EstSe1-3}) into (\ref{EstSe1}) and applying the Gronwall inequality yield
\begin{eqnarray}
  &&\left(\int_\mathbb R\frac{\varrho_0^{2}}{\varrho_\delta^ 2}
  |(s_\varepsilon-\ell)_-|^2\varphi_r^2dy\right)(t)+\int_0^t\int_\mathbb R\frac{\varrho_0 }{\varrho_\delta^ 2}|\partial_y(s_\varepsilon-\ell)_-|^2\varphi_r^2dy \nonumber\\
  &\leq&Ce^{Ct}\Bigg(\int_\mathbb R\frac{\varrho_0^{2}}{\varrho_\delta^ 2}
  |(s_\varepsilon-\ell)_-|^2\varphi_r^2dy\bigg|_{t=0}+\int_0^t\int_\mathbb R
  \frac{\varrho_0 }{\varrho_\delta^ 2}|(s_\varepsilon-\ell)_-|^2|\varphi_r'|^2
  dyd\tau\Bigg)\nonumber\\
  &&+Ce^{Ct}\int_0^t\int_\mathbb R
  \frac{\varrho_0^{2}}{ \varrho_\delta^ 2}\left(\left.\left|\frac{J_y}{\sqrt{\varrho_0}}\right|^2+\varrho_0^2\vartheta^2
  \right)\right|_{\{s_\varepsilon<\ell\}} {\varphi_r^2}dyd\tau.
  \label{EstSe3}
\end{eqnarray}

Due to the definition of $s_\varepsilon$, it holds that
\begin{eqnarray*}
  s_\varepsilon
  &\geq&\log\varepsilon-(\gamma-1)\log\left(\|\varrho_0\|_\infty
  +\varepsilon^{\frac{1}{\gamma-1}}\right),
\end{eqnarray*}
and, thus,
\begin{equation*}
  0\leq(s_\varepsilon-\ell)_-\leq\max\left\{0,\ell-\log\varepsilon+(\gamma-1)\log\left(\|\varrho_0\|_\infty
  +\varepsilon^{\frac{1}{\gamma-1}}\right)\right\}:=A_{\ell,\varepsilon}.
\end{equation*}
Therefore,
\begin{eqnarray}
  \int_0^t\int_\mathbb R\frac{\varrho_0 }
  {\varrho_\delta^ 2}|(s_\varepsilon-\ell)_-|^2|\varphi_r'|^2dyd\tau
   \leq  CA_{\ell,\varepsilon}^2\delta^{- 2}t\int_{r\leq|y|\leq 2r}\frac{\varrho_0 }{r^2}dy \nonumber\\
    \leq  CA_{\ell,\varepsilon}^2\delta^{- 2}\|\varrho_0\|_\infty  tr^{-1}\rightarrow0, \quad\mbox{as }r\rightarrow\infty.\label{EstSe4}
\end{eqnarray}
Thanks to (\ref{EstSe4}), one can take the limits $r\uparrow\infty$ first and then $\delta\downarrow0$ in (\ref{EstSe3})
to get
\begin{align}
  &\bigg(\int_\mathbb R|(s_\varepsilon-\ell)_-|^2dy\bigg)(t)+\int_0^t\int_\mathbb R\frac{|\partial_y(s_\varepsilon-\ell)_-|^2}{\varrho_0}dy \nonumber\\
  \leq&\ \ e^{Ct}\left[\int_0^t\int_\mathbb R\left(\left.\left|\frac{J_y}{\sqrt{\varrho_0}}\right|^2+\varrho_0^2\vartheta^2
  \right)\right|_{\{s_\varepsilon<\ell\}}dyd\tau
  +\int_\mathbb R
  |(s_\varepsilon-\ell)_-|^2dy\bigg|_{t=0}\right],\label{EstSe5}
\end{align}
where the monotone convergence theorem has been used.

Using the elementary inequalities that for any $a,b>0$, $(a+b)^\sigma\leq 2^{\sigma-1}(a^\sigma+b^\sigma)$,
if $\sigma\geq1$, and $(a+b)^\sigma\leq(a^{\sigma}+b^{\sigma})$, if $0<\sigma<1$, one can deduce easily
\begin{eqnarray*}
  \left(\varrho_0+\varepsilon^{\frac{1}{\gamma-1}}
  \right)^{\gamma-1}\leq\max\left\{1,2^{\gamma-2}\right\}(\varrho_0^{\gamma-1}+\varepsilon).
\end{eqnarray*}
On the other hand,
\begin{equation*}
  \vartheta_0+\varepsilon = \frac ARe^{\frac{s_0}{c_v}}\varrho_0^{\gamma-1}+\varepsilon\geq
  \frac ARe^{\frac{\underline s_0}{c_v}}\varrho_0^{\gamma-1}+\varepsilon
  \geq \min\left\{1,\frac ARe^{\frac{\underline s_0}{c_v}}\right\}\left(\varrho_0^{\gamma-1}+\varepsilon\right).
\end{equation*}
Therefore, recalling (\ref{lell0}), one has
\begin{eqnarray*}
  s_\varepsilon\Big|_{t=0}
  &=&\log\left(\frac{\vartheta_0+\varepsilon}{\left(\varrho_0+\varepsilon^{\frac{1}{\gamma-1}}\right)
  ^{\gamma-1}}\right)\geq\log\left(\frac{\min\left\{1,\frac ARe^{\frac{\underline s_0}{c_v}}\right\}}{\max\left\{1,2^{\gamma-2}\right\}}\right)=\underline\ell_0,
\end{eqnarray*}
and, consequently,
\begin{equation}
  (s_\varepsilon-\ell)_-\Big|_{t=0}\equiv0, \quad\forall \ell\leq\underline\ell_0. \label{Se=0}
\end{equation}

Combining (\ref{EstSe5}) with (\ref{Se=0}) yields the conclusion.
\end{proof}

As a straightforward corollary of Proposition \ref{PropEstL2S}, we have the following:

\begin{corollary}
  \label{CorEstSL2}
Let $\underline\ell_0, \mathcal Z_J$ and $s_\varepsilon$ be defined by (\ref{lell0}), (\ref{ZJT}), and (\ref{Sepsilon}),  respectively. Then, for any $\ell\leq\underline\ell_0$, it holds that
\begin{equation*}
  \sup_{0\leq t\leq T}\|(s_\varepsilon-\ell)_-\|_2^2+\int_0^T\left\|\frac{\partial_y(s_\varepsilon-\ell)_-}{\sqrt{\varrho_0}}
\right\|_2^2dt
  \leq C\mathcal Z_J(T),
\end{equation*}
where $C$ is a positive constant depending only on $R,\gamma,\kappa,\mu,\underline J_T, \bar J_T, T$, and $K_1$.
\end{corollary}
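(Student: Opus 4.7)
The plan is to derive this corollary as an immediate consequence of Proposition \ref{PropEstL2S} by bounding the right-hand side there, namely
\[
\int_0^T\int_{\mathbb R}\left(\left|\frac{J_y}{\sqrt{\varrho_0}}\right|^2+\varrho_0^2\vartheta^2\right)\bigg|_{\{s_\varepsilon<\ell\}}dy\,dt,
\]
by a constant multiple of $\mathcal Z_J(T)$. Since the integrand is nonnegative, I would simply drop the indicator $\{s_\varepsilon<\ell\}$ and control the two pieces separately in terms of the quantities appearing in the definition of $\mathcal Z_J(T)$ given in (\ref{ZJT}).

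For the first piece I would pull out the $L^\infty$ in $t$ norm:
\[
\int_0^T\int_{\mathbb R}\frac{|J_y|^2}{\varrho_0}\,dy\,dt\le T\sup_{0\le t\le T}\left\|\varrho_0^{-\frac12}J_y\right\|_2^2\le T\,\mathcal Z_J(T).
\]
For the second piece I would use the uniform upper bound $\varrho_0\le\bar\varrho$ from (\ref{ass-v}) to trade one factor of $\varrho_0$ for a constant:
\[
\int_0^T\int_{\mathbb R}\varrho_0^2\vartheta^2\,dy\,dt\le\bar\varrho\int_0^T\|\sqrt{\varrho_0}\vartheta\|_2^2\,dt\le T\bar\varrho\,\mathcal Z_J(T).
\]
Summing these two inequalities and absorbing $T$, $\bar\varrho$ into a single constant $C$ (which is permitted by the allowed dependencies) gives the desired bound, so combining with Proposition \ref{PropEstL2S} concludes the proof.

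There is no real obstacle here; the corollary is essentially a bookkeeping step that replaces the truncated right-hand side of Proposition \ref{PropEstL2S} by the uniform quantity $\mathcal Z_J(T)$ that will be convenient in the subsequent De Giorgi iteration. The only small point worth noting is that the passage from $\varrho_0^2\vartheta^2$ to $\varrho_0\vartheta^2$ relies on the hypothesis $\varrho_0\in L^\infty(\mathbb R)$ from (\ref{ass-v}), which is indeed available.
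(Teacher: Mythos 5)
Your proof is correct and is essentially the paper's (implicit) argument; the paper offers no explicit proof, calling the corollary ``straightforward,'' and the only available route is exactly the one you took: drop the indicator set and bound the two pieces of the right-hand side of Proposition~\ref{PropEstL2S} by $T\mathcal Z_J(T)$ and $T\bar\varrho\,\mathcal Z_J(T)$, respectively.

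One small clarification is warranted about your closing parenthetical. You write that absorbing $\bar\varrho$ into $C$ ``is permitted by the allowed dependencies,'' but in fact $\bar\varrho$ does \emph{not} appear in the corollary's stated list ($R,\gamma,\kappa,\mu,\underline J_T,\bar J_T,T,K_1$). The bound $\varrho_0\le\bar\varrho$ is genuinely needed to pass from $\int\varrho_0^2\vartheta^2\,dy$ to $\|\sqrt{\varrho_0}\vartheta\|_2^2$, so the constant does acquire a $\bar\varrho$ dependence. This looks like an oversight in the paper's statement rather than an error in your argument: the very next result, Proposition~\ref{PropDGs}, requires the same $\varrho_0\le\bar\varrho$ step (in~(\ref{EstSe7})) and does list $\bar\varrho$ among its dependencies. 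So your proof is fine; just be precise that you are introducing $\bar\varrho$ as an additional dependence, rather than claiming it was already sanctioned by the statement as printed.
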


\subsection{The De Giorgi iteration for $s_\varepsilon$}
\label{ssecDGs}
The De Giorgi iteration for $s_\varepsilon$ is stated in the following proposition.

\begin{proposition}
\label{PropDGs}
Let $\underline\ell_0, \mathcal Z_J$ and $s_\varepsilon$ be defined by (\ref{lell0}), (\ref{ZJT}), and (\ref{Sepsilon}), respectively,
and denote
  \begin{eqnarray*}
q_\ell&=& \sup_{0\leq t\leq T}\|(s_\varepsilon-\ell)_-\|_2^2+\int_0^T\left\|
  \frac{\partial_y(s_\varepsilon-\ell)_-}{\sqrt{\varrho_0}}
\right\|_2^2dt.
\end{eqnarray*}
Then, it holds that
\begin{equation*}
  q_\ell\leq\frac{C\mathcal Z_J(T)}{(m-\ell)^4}q_m^2,
  \qquad \mbox{for any }-\infty<\ell<m\leq\underline\ell_0,
\end{equation*}
with a positive constant $C$ depending only on $R,\gamma,\kappa,\mu,\bar\varrho,\underline J_T,\bar J_T,T$, and $K_1$.
\end{proposition}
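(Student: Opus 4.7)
The starting point is Proposition \ref{PropEstL2S}, which already provides the energy bound
\[
q_\ell \;\leq\; C\int_0^T\!\!\int_{\{s_\varepsilon<\ell\}}\!\Bigl(\bigl|\tfrac{J_y}{\sqrt{\varrho_0}}\bigr|^2+\varrho_0^2\vartheta^2\Bigr)\,dy\,dt
\]
for every $\ell\leq\underline\ell_0$. Thus the task reduces to estimating the right-hand side by $q_m^{\sigma}$ with $\sigma>1$; the gain of a power is what will make the iteration (via Lemma \ref{lemiteration}) produce the desired lower bound of $s_\varepsilon$ in the next subsection. The natural De Giorgi device is to exploit, for $\ell<m$, the pointwise inequality
\[
\mathbf{1}_{\{s_\varepsilon<\ell\}}\;\leq\;\frac{(s_\varepsilon-m)_-^{\,p}}{(m-\ell)^p}
\]
valid for any $p>0$. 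The choice $p=4$ is the one that will produce exactly $q_m^2$.

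Setting $g:=(s_\varepsilon-m)_-$, the plan is then to write
\[
q_\ell\;\leq\;\frac{C}{(m-\ell)^4}\int_0^T\!\!\int_{\mathbb R}\Bigl(\bigl|\tfrac{J_y}{\sqrt{\varrho_0}}\bigr|^2+\varrho_0^2\vartheta^2\Bigr)\,g^4\,dy\,dt,
\]
pull out $\|g(\cdot,t)\|_\infty^4$, and invoke the uniform bound
\[
\Bigl\|\tfrac{J_y}{\sqrt{\varrho_0}}\Bigr\|_2^2+\|\varrho_0^2\vartheta^2\|_1
\;\leq\;\bigl(1+\|\varrho_0\|_\infty\bigr)\mathcal Z_J(T),
\]
which follows directly from the definition (\ref{ZJT}) together with $\varrho_0^2\vartheta^2\leq\|\varrho_0\|_\infty\varrho_0\vartheta^2$. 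This yields the intermediate estimate
\[
q_\ell\;\leq\;\frac{C\,\mathcal Z_J(T)}{(m-\ell)^4}\int_0^T\|g(\cdot,t)\|_\infty^4\,dt.
\]

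The last step is to convert the $L^4_t L^\infty_y$ norm of $g$ into $q_m^2$. Since $g\in L^2(\mathbb R)$ and, thanks to $\varrho_0\leq\bar\varrho$,
\[
\|g_y\|_2\;\leq\;\sqrt{\bar\varrho}\,\bigl\|g_y/\sqrt{\varrho_0}\bigr\|_2,
\]
we have $g(\cdot,t)\in H^1(\mathbb R)$; the one–dimensional Gagliardo--Nirenberg inequality then gives
\[
\|g\|_\infty^4\;\leq\;C\,\|g\|_2^2\,\|g_y\|_2^2\;\leq\;C\bar\varrho\,\|g\|_2^2\,\bigl\|g_y/\sqrt{\varrho_0}\bigr\|_2^2,
\]
so that integrating in time and using the definition of $q_m$,
\[
\int_0^T\|g\|_\infty^4\,dt\;\leq\;C\Bigl(\sup_{0\leq t\leq T}\|g\|_2^2\Bigr)\int_0^T\bigl\|g_y/\sqrt{\varrho_0}\bigr\|_2^2\,dt\;\leq\;C\,q_m^{2}.
\]
Combining this with the previous display delivers the claimed inequality with a constant depending only on the quantities listed.

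The only real subtlety is making sure the $p=4$ power is chosen (not $p=2$, which would yield only $q_m$ and be useless for iteration) and that the singular weight $\varrho_0^{-1}$ in the gradient norm is absorbed by the trivial bound $\varrho_0\leq\bar\varrho$ before applying Gagliardo--Nirenberg; this is the step where the dependence on $\bar\varrho$ enters the final constant. Everything else is bookkeeping once the De Giorgi test $\mathbf 1_{\{s_\varepsilon<\ell\}}\leq(m-\ell)^{-4}g^4$ is in place.
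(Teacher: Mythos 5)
Your proof is correct and follows the same route as the paper: start from Proposition~\ref{PropEstL2S}, use the pointwise bound $\mathbf{1}_{\{s_\varepsilon<\ell\}}\leq(m-\ell)^{-4}|(s_\varepsilon-m)_-|^4$, pull out $\|(s_\varepsilon-m)_-\|_\infty^4$, control the coefficient by $\mathcal Z_J(T)$ together with $\varrho_0\leq\bar\varrho$, and finish with the one-dimensional Gagliardo--Nirenberg/Agmon inequality and $\|\partial_y g\|_2\leq\sqrt{\bar\varrho}\,\|\partial_y g/\sqrt{\varrho_0}\|_2$. The only cosmetic difference is that the paper writes the indicator bound as $1<(s_\varepsilon-m)_-/(m-\ell)$ on the sublevel set and then raises to the fourth power, while you state it directly with exponent $p=4$; otherwise the arguments coincide.
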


\begin{proof}
For any $\ell\leq\underline\ell_0$, Corollary \ref{CorEstSL2} implies that
$$
(s_\varepsilon-\ell)_-\in L^\infty(0,T; L^2(\mathbb R))\cap L^2(0,T; H^1(\mathbb R)),
$$
and Proposition \ref{PropEstL2S} shows that
\begin{eqnarray}
q_\ell
\leq C\int_0^T\int_\mathbb R\left(\left.\left|\frac{J_y}{\sqrt{\varrho_0}}\right|^2+\varrho_0^2\vartheta^2
\right)\right|_{\{s_\varepsilon<\ell\}}dydt. \label{EstSe6}
\end{eqnarray}

Let $-\infty<\ell<m\leq\underline\ell_0$. Then, it is clear that
$$
1<\frac{(s_\varepsilon(y,t)-m)_-}{m-\ell},\quad\mbox{ for any }(y,t)\text{ such that }s_\varepsilon(y,t)<\ell .
$$
It follows from this, (\ref{HSLOW}), (\ref{ZJT}), and the Gagliardo-Nirenberg inequality that
\begin{eqnarray}
  &&\int_0^T\int_\mathbb R\left(\left.\left|\frac{J_y}{\sqrt{\varrho_0}}\right|^2+\varrho_0^2\vartheta^2
\right)\right|_{\{s_\varepsilon<\ell\}}dydt\nonumber\\
  &\leq&\frac{1}{(m-\ell)^4}\int_0^T\int_\mathbb R
  \left(\left|\frac{J_y}{\sqrt{\varrho_0}}\right|^2+\varrho_0^2\vartheta^2
\right)\left|(s_\varepsilon-m)_-\right|^{4}dydt\nonumber\\
  &\leq&\frac{C}{(m-\ell)^4}\int_0^T\left(\left\|\frac{J_y}{\sqrt{\varrho_0}} \right\|_2^2+\|\sqrt{\varrho_0}\vartheta\|_2^2
  \right) \|(s_\varepsilon-m)_-\|_\infty^4 dt\nonumber\\
  &\leq&\frac{C \mathcal Z_J(T)}{(m-\ell)^4}
\int_0^T\|(s_\varepsilon-m)_-\|_2^2\|\partial_y(s_\varepsilon-m)_-\|_2^2 dt\nonumber\\
  &\leq&\frac{C \mathcal Z_J(T)}{(m-\ell)^4}
  \sup_{0\leq t\leq T}\|(s_\varepsilon-m)_-\|_2^2\int_0^T\left\|\frac{\partial_y(s_\varepsilon-m)_-}
  {\sqrt{\varrho_0}}
  \right\|_2^2 dt. \label{EstSe7}
\end{eqnarray}

Combining (\ref{EstSe6}) and (\ref{EstSe7}) yields the conclusion.
\end{proof}

\subsection{Lower bound of the entropy}
\label{ssecSlow}
As a corollary of Proposition \ref{PropDGs} and Lemma \ref{lemiteration},
we have the following uniform lower bound of the entropy.

\begin{theorem}
  \label{ThmSlowBd}
Assume that (\ref{ass-v}), (\ref{finitemass}), (\ref{HSLOW}), and (\ref{HSLOW2}) hold, and that the initial entropy is bonded from below. Let $\underline\ell_0$, $\underline J_T, \mathcal Z_J(T),$ and $\underline M_T$ be given by (\ref{lell0}), (\ref{JT}), (\ref{ZJT}), and (\ref{LMT}), respectively. Then, the unique global solution obtained in Theorem \ref{ThmGloUni-v} satisfies
\begin{eqnarray*}
\inf_{(y,t)\in\mathbb R\times(0,T)}s&\geq&c_v\left[\log\frac RA+\underline\ell_0+(\gamma-1)\log \underline J_T-\underline M_TT-C\Big(\mathcal Z_J(T)+1-\underline \ell_0\Big)^5\right],
\end{eqnarray*}
for any positive time $T$, with a positive constant $C$ depending only on $R$, $\gamma$, $\kappa$, $\mu$, $\underline J_T$, $\bar J_T$, $T$, and $K_1$.
\end{theorem}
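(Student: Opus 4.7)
The plan is to close the De Giorgi iteration for the modified entropy $s_\varepsilon$ by combining Proposition~\ref{PropDGs} with Lemma~\ref{lemiteration}, and then to pass to the limit $\varepsilon\downarrow0$ and convert back from $s_\varepsilon$ to the physical entropy $s$ via the formula (\ref{Entropy}).

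First, recall that Proposition~\ref{PropDGs} furnishes the nonlinear recursion
\[
q_\ell \,\leq\, \frac{C\,\mathcal Z_J(T)}{(m-\ell)^4}\,q_m^2,\qquad -\infty<\ell<m\leq\underline\ell_0,
\]
and that $q_\ell$ is manifestly non-increasing in $\ell$, while Corollary~\ref{CorEstSL2} gives the starting bound $q_{\underline\ell_0}\leq C\mathcal Z_J(T)$. To match the statement of Lemma~\ref{lemiteration}, I will reverse the parameter by setting $f(\lambda):=q_{\underline\ell_0-\lambda}$, so that $f$ is non-increasing on $[m_0,\infty)$ with $m_0:=\max\{0,-\underline\ell_0\}\geq 0$. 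The recursion then becomes $f(\lambda)\leq M_0(\lambda-m)^{-4}f(m)^{2}$ on $[m_0,\infty)$, which is the hypothesis of Lemma~\ref{lemiteration} with $(\alpha,\beta,\sigma)=(0,4,2)$ and $M_0=C\mathcal Z_J(T)$.

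Applying Lemma~\ref{lemiteration} then yields $f(m_0+d)=0$, i.e.\ $q_{\underline\ell_0-d}\equiv 0$, where the iteration constant is
\[
d \,\leq\, \bigl[\,2\,f(m_0)^{2}\,(m_0+M_0+2)^{18}\bigr]^{1/4}+2 \,\leq\, C\bigl(\mathcal Z_J(T)+1-\underline\ell_0\bigr)^{5},
\]
the exponent $18=\tfrac{9}{1}+\tfrac{4}{1}+5$ being read off from Lemma~\ref{lemiteration} and the fifth power coming from $\tfrac{18+2}{\beta-\alpha}=\tfrac{20}{4}$ once $f(m_0)^{2}$ is absorbed into the same factor. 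In particular $\|(s_\varepsilon-(\underline\ell_0-d))_-\|_2(t)\equiv 0$ on $[0,T]$, so $s_\varepsilon\geq \underline\ell_0-d$ a.e.\ on $\mathbb R\times(0,T)$, uniformly in $\varepsilon\in(0,1)$.

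Finally, I unwind the definitions. From (\ref{Sepsilon}), $s_\varepsilon=\log\vartheta_\varepsilon-(\gamma-1)\log\tilde\varrho_\varepsilon+\underline M_T t$; letting $\varepsilon\downarrow0$ pointwise (which is legitimate because $\varrho_0>0$ everywhere by (H0), and because the assumption of an initial lower bound on the entropy forces $\vartheta_0>0$, which together with the parabolic structure of (\ref{EqTheta}) propagates $\vartheta>0$ on $\mathbb R\times[0,T]$) gives
\[
\log\vartheta-(\gamma-1)\log\varrho_0 \,\geq\, \underline\ell_0-\underline M_T T-C\bigl(\mathcal Z_J(T)+1-\underline\ell_0\bigr)^{5}.
\]
Substituting this into $s=c_v[\log(R/A)+(\gamma-1)\log J+\log\vartheta-(\gamma-1)\log\varrho_0]$ and using $J\geq\underline J_T$ together with $\gamma>1$ produces the claimed lower bound.

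The main obstacle is bookkeeping rather than analysis: choosing the shift $m_0=\max\{0,-\underline\ell_0\}$ so that the factor $(m_0+M_0+2)^{18/4}$ from Lemma~\ref{lemiteration}, combined with the $f(m_0)^{2/4}$ factor bounded via Corollary~\ref{CorEstSL2}, produces exactly the advertised power $(\mathcal Z_J(T)+1-\underline\ell_0)^{5}$. The $\varepsilon\downarrow 0$ passage is then straightforward because the bound $s_\varepsilon\geq \underline\ell_0-d$ is uniform in $\varepsilon$ and $\vartheta_\varepsilon\to\vartheta$, $\tilde\varrho_\varepsilon\to\varrho_0$ pointwise with strictly positive limits.
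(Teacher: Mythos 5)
Your approach is the same as the paper's: feed the recursion of Proposition~\ref{PropDGs} into Lemma~\ref{lemiteration} with $(\alpha,\beta,\sigma)=(0,4,2)$ and $M_0=C\mathcal Z_J(T)$, conclude $q_\ell=0$ at some explicit level below $\underline\ell_0$, then convert this into a pointwise lower bound for $\vartheta$ and finally for $s$. Two small remarks. First, your reparametrization $f(\lambda):=q_{\underline\ell_0-\lambda}$ already carries the shift by $\underline\ell_0$, so the correct base point for Lemma~\ref{lemiteration} is $m_0=0$, not $m_0=\max\{0,-\underline\ell_0\}$: with your stated $m_0=-\underline\ell_0$ one has $f(m_0+d)=q_{2\underline\ell_0-d}$ rather than $q_{\underline\ell_0-d}$, and likewise $f(m_0)=q_{2\underline\ell_0}$ rather than $q_{\underline\ell_0}$. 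Since $\underline\ell_0\le 0$, the extra shift by $-\underline\ell_0$ can be absorbed into the factor $(\mathcal Z_J(T)+1-\underline\ell_0)^5$, so the stated bound survives, but the bookkeeping as written is inconsistent (the paper sidesteps this by using $f(\ell)=q_{-\ell}$ with $m_0=-\underline\ell_0$). Second, the passage $\varepsilon\downarrow 0$ is best done by exponentiating first: from $s_\varepsilon\ge\underline\ell_0-d$ one gets $\vartheta+\varepsilon\ge e^{\underline\ell_0-d-\underline M_T t}(\varrho_0+\varepsilon^{1/(\gamma-1)})^{\gamma-1}$, which passes to the limit without any appeal to strict positivity of $\vartheta$; indeed, strict positivity of $\vartheta$ then follows as a byproduct, so invoking a parabolic positivity argument (which is delicate anyway on an unbounded domain with degenerate coefficient $\varrho_0$) is an unnecessary detour.
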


\begin{proof}
Set $m_0=-\underline\ell_0\geq0$,
and define $f(\ell):=q_{-\ell}$, for $\ell\geq m_0$, with $q_\ell$ given in Proposition \ref{PropDGs}.
Then, $f$ is nonnegative and non-increasing on $[m_0,\infty)$. It follows from Proposition \ref{PropDGs} that
$$
f(\ell)=q_{-\ell}\leq \frac{C \mathcal Z_J(T) }{(\ell-m)^4}f^2(m),\quad \forall\ell>m\geq m_0.
$$
Applying Lemma \ref{lemiteration}, with $M_0=C \mathcal Z_J(T), \alpha=0, \beta=4,$ and $\sigma=2,$ one can get
\begin{equation}
f(m_0+d_0)=q_{-(m_0+d_0)}=q_{\underline\ell_0-d_0}=0, \label{SLOW1}
\end{equation}
where $d_0=\left[2q_{\underline\ell_0}^2\big(-\ell_0+C \mathcal Z_J(T) +2\big)^{18}\right]^{\frac14}+2.$
Thus,
$$
(s_\varepsilon-(\underline\ell_0-d_0))_-=0, \quad \mbox{ on }\mathbb R\times(0,T),
$$
which, due to the definition of $s_\varepsilon$, implies that
$$
\vartheta+\varepsilon\geq e^{\ell_0-d_0-\underline M_TT}\left(\varrho_0+\varepsilon^{\frac1{\gamma-1}}\right)^{\gamma-1}.
$$
This, passing limit $\varepsilon\rightarrow0$, shows that
$\vartheta \geq e^{\ell_0-d_0-\underline M_TT} \varrho_0 ^{\gamma-1}.$
Therefore,
\begin{eqnarray}
s
&=&c_v\left(\log\frac RA+\log\vartheta-(\gamma-1)\log\varrho_0+(\gamma-1)\log J\right)\nonumber\\
&\geq&c_v\left(\log\frac RA+\underline\ell_0-d_0-\underline M_TT+(\gamma-1)\log \underline J_T\right),\label{20181221-1}
\end{eqnarray}
for any $(y,t)\in\mathbb R\times(0,T)$.
Corollary \ref{CorEstSL2} and the expression of $d_0$ imply that $d_0\leq C(\mathcal Z_J(T)+1-\ell_0)^5,$ which, together with (\ref{20181221-1}), leads to the conclusion.
\end{proof}

\section{Uniform upper bound of the entropy}
\label{SecUppEty}
This section is devoted to deriving the uniform upper bound for the entropy. Due to the degeneracy of equations (\ref{EqV})--(\ref{EqTheta}) at the far fields, some
singular type estimates on $(v,\vartheta, G)$ will be needed, which require some additional compatibility conditions on the initial data. Indeed,
in addition to  (\ref{ass-v}), (\ref{finitemass}), (\ref{HSLOW}), and (\ref{HSLOW2}), used in Theorem \ref{ThmSlowBd}, we assume further that the initial entropy is bounded from above, and
\begin{equation}
  \varrho_0^{\frac{1-\gamma}{2}}v_0, \varrho_0^{1-\frac\gamma2}\vartheta_0, \varrho_0^{-\frac\gamma2}G_0\in L^2(\mathbb R),\label{ass-singular}
\end{equation}
where $G_0=\mu\frac{v_0'}{J_0}-R\frac{\varrho_0}{J_0}\vartheta_0$.

All the notations in Section \ref{SecLowEty} will be adopted in this section. Furthermore, set
\begin{equation*}
\bar\ell_0:=\frac ARe^{\frac{\bar s_0}{c_v}}, \label{uell0}
\end{equation*}
where
$\bar s_0:=\sup_{y\in\mathbb R}s_0(y),$ and, for any positive time $T$,
\begin{eqnarray}
  \mathcal Z_\vartheta(T)&:=&\sup_{0\leq t\leq T}\|\varrho_0^{1-\frac\gamma2}\vartheta\|_2^2+
  \int_0^T\|\varrho_0^{\frac{1-\gamma}{2}}\vartheta\|_2^2dt,\label{ZTT}\\
  \mathcal Z_G(T)&:=&\sup_{0\leq t\leq T}\|\varrho_0^{-\frac\gamma2}G\|_2^2+
  \int_0^T\|\varrho_0^{-\frac{\gamma+1}{2}}G\|_2^2dt.\label{ZGT}
\end{eqnarray}

The following lemma holds.

\begin{lemma}\label{lemwineq} Let $\sigma\not=0$ and (\ref{HSLOW}) hold. Then, it holds that
  \begin{equation*}
  \|\varrho_0^\sigma f\|_q\leq C\|\varrho_0\|_\infty^{\frac14-\frac{1}{2q}}\left(\|\varrho_0^{\sigma} f\|_2+\|\varrho_0^\sigma f\|_2^{\frac12+\frac1q}\|\varrho_0^{\sigma-\frac12} \partial_yf\|_2^{\frac12-\frac1q} \right),
  \quad2\leq q\leq\infty,
  \end{equation*}
for any $f$ with $\varrho_0^\sigma f\in H^1(\mathbb R)$, where positive constant $C$ depends only on $\sigma, q,$ and $K_1$.
\end{lemma}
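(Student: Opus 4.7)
The plan is to prove first the endpoint case $q=\infty$ as a weighted Sobolev-type inequality, and then obtain the full range $2\le q\le\infty$ by the standard log-convex interpolation
$$\|h\|_q \le \|h\|_2^{2/q}\|h\|_\infty^{1-2/q}$$
applied to $h = \varrho_0^\sigma f$. The content of the lemma is therefore essentially encoded in the $L^\infty$ bound.

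For $q=\infty$, set $g := \varrho_0^\sigma f \in H^1(\mathbb R)$ and start from the one-dimensional identity $\|g\|_\infty^2 \le 2\int_{\mathbb R}|g||g'|\,dy$ (valid since $H^1(\mathbb R)$ functions decay at infinity). Differentiating the weight gives $g' = \sigma\varrho_0^{\sigma-1}\varrho_0'f + \varrho_0^\sigma f'$, and the slow-decay hypothesis (\ref{HSLOW}) yields $|\sigma\varrho_0^{\sigma-1}\varrho_0'|\le |\sigma|K_1\varrho_0^{\sigma+1/2}$, hence
$$|g||g'|\le |\sigma|K_1\,\varrho_0^{1/2}(\varrho_0^\sigma f)^2 + \varrho_0^{1/2}|\varrho_0^\sigma f|\cdot|\varrho_0^{\sigma-1/2}f'|.$$
Pulling $\|\varrho_0\|_\infty^{1/2}$ outside the integral and applying Cauchy-Schwarz to the cross term gives
$$\|g\|_\infty^2 \le C\|\varrho_0\|_\infty^{1/2}\bigl(\|\varrho_0^\sigma f\|_2^2 + \|\varrho_0^\sigma f\|_2\,\|\varrho_0^{\sigma-1/2}f'\|_2\bigr),$$
and the elementary $\sqrt{a+b}\le\sqrt a+\sqrt b$ delivers the $q=\infty$ case with the correct prefactor $\|\varrho_0\|_\infty^{1/4}$.

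For general $q\in[2,\infty)$, substitute the above $L^\infty$ bound into the interpolation and use sub-additivity $(a+b)^\theta\le a^\theta + b^\theta$ with $\theta := 1-2/q\in[0,1]$ to split the sum. A direct bookkeeping of exponents gives $2/q+\theta = 1$, $2/q+\theta/2 = 1/2+1/q$, $\theta/2 = 1/2-1/q$, and $\theta/4 = 1/4-1/(2q)$, which match the inequality as stated. I do not anticipate any genuine obstacle: the only conceptual step is recognizing that (\ref{HSLOW}) is exactly the slow-decay condition that upgrades the singular factor $\varrho_0^{\sigma-1}\varrho_0'$ coming from $\partial_y\varrho_0^\sigma$ to the tame weight $\varrho_0^{\sigma+1/2}$, so that the extra $\varrho_0^{1/2}$ can be absorbed into $\|\varrho_0\|_\infty^{1/2}$ and the remainder sits naturally as the weighted $L^2$ norm $\|\varrho_0^\sigma f\|_2$ on the right-hand side.
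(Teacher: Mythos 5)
Your proof is correct and follows essentially the same route as the paper's. The paper applies the one-dimensional Gagliardo--Nirenberg inequality $\|g\|_q \leq C\|g\|_2^{\frac12+\frac1q}\|g'\|_2^{\frac12-\frac1q}$ directly to $g=\varrho_0^\sigma f$ and then expands $g'$ and invokes (\ref{HSLOW}); you instead prove the $q=\infty$ endpoint via the Agmon inequality $\|g\|_\infty^2\leq 2\int|g||g'|$ and interpolate down, which is precisely how the one-dimensional Gagliardo--Nirenberg inequality is derived, so the two arguments coincide once that black box is unpacked.
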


\begin{proof}
It follows from the Gagliardo-Nirenberg inequality that
  \begin{eqnarray*}
  \|\varrho_0^\sigma f\|_q
  &\leq&  C\|\varrho_0^\sigma f\|_2^{\frac12+\frac1q}\left(\|\varrho_0^\sigma \partial_yf\|_2+\|\varrho_0^{\sigma-1}\varrho_0' f\|_2\right)^{\frac12-\frac1q}\\
  &\leq&  C\|\varrho_0^\sigma f\|_2^{\frac12+\frac1q}\left(\|\varrho_0^\sigma \partial_yf\|_2+\|\varrho_0^{\sigma+\frac12} f\|_2\right)^{\frac12-\frac1q}\\
  &\leq&  C\|\varrho_0\|_\infty^{\frac14-\frac{1}{2q}}\left(\|\varrho_0^{\sigma} f\|_2+\|\varrho_0^\sigma f\|_2^{\frac12+\frac1q}\|\varrho_0^{\sigma-\frac12} \partial_yf\|_2^{\frac12-\frac1q} \right),
  \end{eqnarray*}
which yields the conclusion.
\end{proof}

As mentioned already in the Introduction, the uniform upper bound for $s$ is achieved by applying
a modified De Giorgi iteration
to the temperature equation rather than to the entropy equation itself. As preparations, a series of singular
energy estimates will be carried out in the following three subsections. These estimates will
be proven in a brief way to make the ideas clear. However, as indicated in the proof of Proposition \ref{PropEstL2S}, one can adopt similar cut-off and approximations there to justify the arguments rigorously.
In particular, one can choose $\frac{\varrho_0}{\varrho_\delta^{\gamma+1}}v\varphi_r^2$ and $\frac{\varrho_0^2}{\varrho_\delta^{\gamma+1}}\vartheta\varphi_r^2$, $\frac{\varrho_0}{\varrho_\delta^{\gamma+1}}J G\varphi_r^2$, and $\frac{\varrho_0}{\varrho_\delta^{2\gamma}}
(\vartheta_\ell)_+\varphi_r^2$, respectively, as testing functions in Propositions \ref{PropWestvtheta}, \ref{PropWestG}, and \ref{PropWL2EstThetaell}, and pass the limits $r\uparrow\infty$ and $\delta\downarrow0$ to give the rigorous proofs.

\subsection{Singular weighted estimates on $(v, \vartheta)$}
\label{subsecsinvt}
\begin{proposition}
\label{PropWestvtheta}
It holds that
  \begin{eqnarray*}
  \sup_{0\leq t\leq T}\bigg(\big\|\varrho_0^{\frac{1-\gamma}{2}}v\big\|_2^2+\big\|
  \varrho_0^{1-\frac{\gamma}{2}}\vartheta\big\|_2^2\bigg) +\int_0^T\left(
  \big\|\varrho_0^{-\frac{\gamma}{2}}v_y\big\|_2^2+\big\|\varrho_0^{\frac{1-\gamma}{2}}
  \vartheta_y\big\|_2^2\right)dt \nonumber\\
  \leq C\left(\big\|\varrho_0^{\frac{1-\gamma}{2}}v_0\big\|_2^2+\big\|
  \varrho_0^{1-\frac{\gamma}{2}}\vartheta_0\big\|_2^2\right)
  e^{C\int_0^T\|v_y\|_2^4dt},
\end{eqnarray*}
for a positive constant $C$ depending only on $\mu, \kappa, \gamma, R, \bar\varrho, K_1, T, \underline J_T,$ and $\bar J_T$.
\end{proposition}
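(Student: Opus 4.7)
The strategy is parallel to the De~Giorgi-style energy estimates already used in Propositions \ref{PropEstL2S} and \ref{PropEstL2-V}: formally test \eqref{EqV} with $\varrho_0^{-\gamma}v$ and \eqref{EqTheta} with $\varrho_0^{1-\gamma}\vartheta$, combine, absorb a little dissipation, and close by Gronwall. As in Proposition \ref{PropEstL2S} one should really multiply the test functions by a cutoff $\varphi_r^2$ and a regularization $(\varrho_0+\delta)^{-\gamma}$ instead of $\varrho_0^{-\gamma}$, run the estimates with these, and then pass $r\uparrow\infty$ and $\delta\downarrow 0$ using $\varrho_0^{(1-\gamma)/2}v_0,\varrho_0^{1-\gamma/2}\vartheta_0\in L^2$ from \eqref{ass-singular}; the cutoff boundary contribution is shown to vanish by the same kind of tail bound as in \eqref{EstSe4}, using the integrability obtained in Propositions \ref{PropBasic}--\ref{PropEsttheta}. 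I will describe only the formal computation on the limiting equations.

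Testing \eqref{EqV} with $\varrho_0^{-\gamma}v$ and integrating by parts produces $\frac12\frac{d}{dt}\|\varrho_0^{(1-\gamma)/2}v\|_2^2$, the good dissipation $\mu\|\varrho_0^{-\gamma/2}v_y/\sqrt J\|_2^2$, a commutator $-\gamma\mu\int \varrho_0^{-\gamma-1}\varrho_0' v_y v/J\,dy$, and the pressure contribution $-R\int (\varrho_0\vartheta/J)(\varrho_0^{-\gamma}v)_y dy$. Using \eqref{HSLOW} (so $|\varrho_0^{-\gamma-1}\varrho_0'|\leq K_1\varrho_0^{-1/2-\gamma+1}$ etc.) the exponents pair perfectly: $\varrho_0^{1/2-\gamma}=\varrho_0^{-\gamma/2}\cdot\varrho_0^{(1-\gamma)/2}$ and $\varrho_0^{3/2-\gamma}=\varrho_0^{-\gamma/2}\cdot\varrho_0^{1-\gamma/2}\cdot\varrho_0^{1/2}$, so Cauchy--Schwarz and Young yield
\begin{equation*}
\tfrac{d}{dt}\|\varrho_0^{(1-\gamma)/2}v\|_2^2+c\|\varrho_0^{-\gamma/2}v_y\|_2^2\leq C\|\varrho_0^{(1-\gamma)/2}v\|_2^2+C\|\varrho_0^{1-\gamma/2}\vartheta\|_2^2.
\end{equation*}
Testing \eqref{EqTheta} with $\varrho_0^{1-\gamma}\vartheta$ analogously produces $\frac{c_v}{2}\frac{d}{dt}\|\varrho_0^{1-\gamma/2}\vartheta\|_2^2$, the good dissipation $\kappa\|\varrho_0^{(1-\gamma)/2}\vartheta_y/\sqrt J\|_2^2$, a commutator in $\varrho_0'$ of the same shape (handled again by \eqref{HSLOW}), the advective term $R\int \varrho_0^{2-\gamma}\vartheta^2 v_y/J\,dy$, and the viscous source $\mu\int \varrho_0^{1-\gamma}\vartheta v_y^2/J\,dy$.

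The only real obstacle is this last viscous source, since $1-\gamma<0$ makes $\varrho_0^{1-\gamma}$ singular at the far field. I would split $v_y^2=v_y\cdot v_y$ and $\varrho_0^{1-\gamma}=\varrho_0^{1-\gamma/2}\cdot\varrho_0^{-\gamma/2}$, then apply Hölder and Young to obtain
\begin{equation*}
\int \varrho_0^{1-\gamma}\vartheta v_y^2\,dy\leq \|\varrho_0^{1-\gamma/2}\vartheta\|_\infty\|v_y\|_2\|\varrho_0^{-\gamma/2}v_y\|_2\leq \tfrac{\mu}{4\bar J_T}\|\varrho_0^{-\gamma/2}v_y\|_2^2+C\|v_y\|_2^2\|\varrho_0^{1-\gamma/2}\vartheta\|_\infty^2,
\end{equation*}
and then use Lemma \ref{lemwineq} with $\sigma=1-\gamma/2$, $q=\infty$, $f=\vartheta$ to get
\begin{equation*}
\|\varrho_0^{1-\gamma/2}\vartheta\|_\infty^2\leq C\bigl(\|\varrho_0^{1-\gamma/2}\vartheta\|_2^2+\|\varrho_0^{1-\gamma/2}\vartheta\|_2\|\varrho_0^{(1-\gamma)/2}\vartheta_y\|_2\bigr).
\end{equation*}
One more application of Young turns $\|v_y\|_2^2\|\varrho_0^{1-\gamma/2}\vartheta\|_2\|\varrho_0^{(1-\gamma)/2}\vartheta_y\|_2$ into $\epsilon\|\varrho_0^{(1-\gamma)/2}\vartheta_y\|_2^2+C\|v_y\|_2^4\|\varrho_0^{1-\gamma/2}\vartheta\|_2^2$, which is precisely the source of the $\|v_y\|_2^4$ appearing in the statement. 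The advective term $R\int \varrho_0^{2-\gamma}\vartheta^2 v_y/J\,dy$ is controlled by the same device (Hölder with $\|\varrho_0^{1-\gamma/2}\vartheta\|_\infty$ and Lemma \ref{lemwineq}), producing at worst a $\|v_y\|_2^2$-type Gronwall weight that is subsumed into $\|v_y\|_2^4+1$.

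Adding the $v$- and $\vartheta$-estimates, absorbing the small dissipation terms into the good ones on the left (using $\underline J_T,\bar J_T$ from Corollary \ref{CorJlower}), we arrive at a differential inequality
\begin{equation*}
\tfrac{d}{dt}\Phi+c\bigl(\|\varrho_0^{-\gamma/2}v_y\|_2^2+\|\varrho_0^{(1-\gamma)/2}\vartheta_y\|_2^2\bigr)\leq C(1+\|v_y\|_2^4)\Phi,
\end{equation*}
with $\Phi:=\|\varrho_0^{(1-\gamma)/2}v\|_2^2+\|\varrho_0^{1-\gamma/2}\vartheta\|_2^2$. Since $v_y\in L^\infty(0,T;L^2)$ by Proposition \ref{PropEstH1u}, the Gronwall exponent $\int_0^T(1+\|v_y\|_2^4)\,dt$ is finite, and the Gronwall lemma applied to $\Phi$ followed by integration of the dissipation terms yields the claimed estimate, absorbing the additional $e^{CT}$ into the constant.
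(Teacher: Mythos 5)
Your proposal is correct and follows essentially the same route as the paper: test \eqref{EqV} with $\varrho_0^{-\gamma}v$ and \eqref{EqTheta} with $\varrho_0^{1-\gamma}\vartheta$, use \eqref{HSLOW} for the commutators, control the viscous source $\int\varrho_0^{1-\gamma}\vartheta v_y^2\,dy$ via Lemma~\ref{lemwineq} with $q=\infty$ (which is exactly where the $\|v_y\|_2^4$ weight arises), and close with Gronwall. The only cosmetic difference is that for the advective term $\int\varrho_0^{2-\gamma}|v_y|\vartheta^2\,dy$ you apply H\"older with the $L^\infty$ norm of $\varrho_0^{1-\gamma/2}\vartheta$ whereas the paper uses its $L^4$ norm via Lemma~\ref{lemwineq} with $q=4$; both produce a Gronwall weight dominated by $1+\|v_y\|_2^4$, so the conclusion is unchanged.
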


\begin{proof}
Taking the inner product of (\ref{EqV}) with $\frac{v}{\varrho_0^\gamma}$ leads to
\begin{equation}
  \frac12\frac{d}{dt}\|\varrho_0^\frac{1-\gamma}{2}v\|_2^2+\mu\int_\mathbb R\frac{v_y}{J}\partial_y\left(\frac{v}{\varrho_0^\gamma}\right)dy=R\int_\mathbb R\frac{\varrho_0\vartheta}{J}\partial_y\left(\frac{v}{\varrho_0^\gamma}\right)dy. \label{Westv0}
\end{equation}
Direct estimates give
\begin{equation}
  \int_\mathbb R\frac{v_y}{J}\partial_y\left(\frac{v}{\varrho_0^\gamma}\right)dy
  \geq\frac34\int_\mathbb R\frac{v_y^2}{J\varrho_0^\gamma}dy
  -C\int_\mathbb R\frac{v^2}{J\varrho_0^\gamma}\left|\frac{\varrho_0'}{\varrho_0}\right|^2dy,
  \label{Westvdiff}
\end{equation}
and
\begin{eqnarray}
  \int_\mathbb R\frac{\varrho_0\vartheta}{J}\partial_y\left(\frac{v}{\varrho_0^\gamma}\right)dy
  &\leq&\frac{\mu}{4R}\int_\mathbb R\frac{v_y^2}{J\varrho_0^\gamma}dy +C\int_\mathbb R\frac{1}{J\varrho_0^\gamma}\left(\varrho_0^2\vartheta^2+v^2\left|\frac{\varrho_0'}{\varrho_0}\right|^2\right)dy.
  \label{Westvother}
\end{eqnarray}
It follows from (\ref{HSLOW}) and (\ref{Westv0})--(\ref{Westvdiff}) that
\begin{eqnarray}
  &&\frac{d}{dt}\|\varrho_0^\frac{1-\gamma}{2}v\|_2^2+\frac{\mu}{\bar J_T}\|\varrho_0^{-\frac\gamma2}v_y\|_2^2\leq C(\|\varrho_0^{\frac{1-\gamma}{2}}v\|_2^2+\|\varrho_0^{1-\frac\gamma2}\vartheta\|_2^2).\label{Westv}
\end{eqnarray}
Next, taking the inner product of (\ref{EqTheta}) with $\frac{\vartheta}{\varrho_0^{\gamma-1}}$ and estimating as for (\ref{Westvdiff}), one can get from (\ref{HSLOW}) that
\begin{equation}
  c_v\frac{d}{dt} \|\varrho_0^{1-\frac\gamma2}\vartheta\|_2^2+\frac{\kappa}{\bar J_T}\|\varrho_0^{\frac{1-\gamma}{2}}\vartheta_y\|_2^2
  \leq C\int_\mathbb R\left(\frac{\vartheta^2}{J\varrho_0^{\gamma-2}}+\frac{|v_y|\vartheta^2}{J\varrho_0^{\gamma-2}}
  +\frac{v_y^2\vartheta}{J\varrho_0^{\gamma-1}}\right)dy.\label{Westtheta}
\end{equation}
Summing (\ref{Westv}) with (\ref{Westtheta}) leads to
\begin{eqnarray}
   &&\frac{d}{dt}(\|\varrho_0^\frac{1-\gamma}{2}v\|_2^2+c_v \|\varrho_0^{1-\frac\gamma2}\vartheta\|_2^2)
  +\frac{1}{\bar J_T}(\mu\|\varrho_0^{-\frac\gamma2}v_y\|_2^2+\kappa\|\varrho_0^{\frac{1-\gamma}{2}}\vartheta_y\|_2^2) \nonumber\\
   &\leq&C(\|\varrho_0^{\frac{1-\gamma}{2}}v\|_2^2+\|\varrho_0^{1-\frac\gamma2}\vartheta\|_2^2)
  +C\int_\mathbb R\left(\varrho_0^{2-\gamma}|v_y|\vartheta^2
  +\varrho_0^{1-\gamma}v_y^2\vartheta\right)dy.
  \label{Westvtheta0}
\end{eqnarray}

It follows from Lemma \ref{lemwineq} that
\begin{eqnarray}
  \int_\mathbb R\varrho_0^{2-\gamma}|v_y|\vartheta^2dy& \leq&
  \|v_y\|_2\|\varrho_0^{1-\frac\gamma2}\vartheta\|_4^2\nonumber\\
  &\leq& C\|v_y\|_2\left(\|\varrho_0^{1-\frac\gamma2}\vartheta\|_2^2+
  \|\varrho_0^{1-\frac\gamma2}\vartheta\|_2^{\frac32}\|\varrho_0^{\frac{1-\gamma}{2}}\vartheta_y\|_2^{\frac12}\right) \nonumber\\
  &\leq&\eta\|\varrho_0^{\frac{1-\gamma}{2}}\vartheta_y\|_2^2+C_\eta
  \left(\|v_y\|_2+\|v_y\|_2^{\frac43}\right)\|\varrho_0^{1-\frac\gamma2}\vartheta\|_2^2,
  \label{Westvtheta1-1}
\end{eqnarray}
and
\begin{eqnarray}
\int_\mathbb R\varrho_0^{1-\gamma}v_y^2\vartheta dy
&\leq&  \|v_y\|_2\|\varrho_0^{-\frac\gamma2}
  v_y\|_2\|\varrho_0^{1-\frac{\gamma}{2}}
  \vartheta\|_\infty\nonumber\\
  &\leq& C\|v_y\|_2\|\varrho_0^{-\frac\gamma2}
  v_y\|_2\left(\|\varrho_0^{1-\frac{\gamma}{2}}\vartheta\|_2+
  \|\varrho_0^{1-\frac{\gamma}{2}}\vartheta\|_2^{\frac12}\|\varrho_0^{\frac{1-\gamma}{2}}\vartheta_y\|_2^{\frac12} \right)\nonumber\\
  &\leq&\eta\big\|(\varrho_0^{-\frac{\gamma}{2}}
  v_y,\varrho_0^{\frac{1-\gamma}{2}}
  \vartheta_y)\big\|_2^2+C_\eta
  (\|v_y\|_2^2+\|v_y\|_2^4)\|\varrho_0^{1-\frac{\gamma}{2}}
  \vartheta\|_2^2.\label{Westvtheta1-2}
\end{eqnarray}
Substituting (\ref{Westvtheta1-1}) and (\ref{Westvtheta1-2}) into (\ref{Westvtheta0})
and choosing $\eta$ sufficiently small yield
\begin{eqnarray*}
  2\frac{d}{dt}(\|\varrho_0^\frac{1-\gamma}{2}v\|_2^2+c_v \|\varrho_0^{1-\frac\gamma2}\vartheta\|_2^2)
  +\frac{1}{\bar J_T}(\mu\|\varrho_0^{-\frac\gamma2}v_y\|_2^2+\kappa\|\varrho_0^{\frac{1-\gamma}{2}}\vartheta_y\|_2^2) \\ \leq C
  (1+\|v_y\|_2^4)(\|\varrho_0^\frac{1-\gamma}{2}v\|_2^2+ \|\varrho_0^{1-\frac\gamma2}\vartheta\|_2^2),
\end{eqnarray*}
from which, by the Gronwall inequality, the conclusion follows.
\end{proof}

\subsection{A singular weighted estimate on $G$}
\label{subsecsinG}
Based on Proposition \ref{PropWestvtheta}, one can derive the corresponding weighted a priori estimates
on $G$.

\begin{proposition}
\label{PropWestG}
It holds that
$$
\sup_{0\leq t\leq T}\big\|\varrho_0^{-\frac\gamma2}G\big\|_2^2(t)
  +\int_0^T\big\|\varrho_0^{-\frac{1+\gamma}{2}}G_y\big\|_2^2 dt
   \leq Ce^{\int_0^T\|v_y\|_2^2dt}\big\|(\varrho_0^{\frac{1-\gamma}{2}}v_0,
  \varrho_0^{1-\frac{\gamma}{2}}\vartheta_0,\varrho_0^{-\frac\gamma2}
  G_0)\big\|_2^2,
$$
for a positive constant $C$ depending only on $\mu,\kappa,\gamma,K_1,T,\underline J_T,$ and $\bar J_T$.
\end{proposition}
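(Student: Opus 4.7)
The plan is to multiply the $G$-equation \eqref{EqG} by the singularly weighted test function $J\varrho_0^{-\gamma}G$ and integrate over $\mathbb{R}$, following the strategy announced in the Introduction. Writing the left-hand side first: using $J_t = v_y$, the time derivative produces
$$
\int_\mathbb{R} J\varrho_0^{-\gamma}GG_t\,dy = \frac{1}{2}\frac{d}{dt}\|\sqrt{J}\,\varrho_0^{-\gamma/2}G\|_2^2 - \frac{1}{2}\int_\mathbb{R} v_y\varrho_0^{-\gamma}G^2\,dy,
$$
while integration by parts on the viscous term, combined with the expansion $(\varrho_0^{-\gamma}G)_y = -\gamma\varrho_0^{-\gamma-1}\varrho_0'G + \varrho_0^{-\gamma}G_y$ and the slow-decay bound $|\varrho_0'|\leq K_1\varrho_0^{3/2}$, yields the principal good term $\mu\|\varrho_0^{-(1+\gamma)/2}G_y\|_2^2$ modulo a Cauchy--Schwarz-absorbable commutator of the form $\int |\varrho_0'||G||G_y|\varrho_0^{-\gamma-2}dy$ controlled by $\tfrac12\|\varrho_0^{-(1+\gamma)/2}G_y\|_2^2 + C\|\varrho_0^{-\gamma/2}G\|_2^2$.

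Next, I would handle the right-hand side. The $\vartheta_y$ term is integrated by parts to transfer the outer derivative onto $J\varrho_0^{-\gamma}G$, producing an interior term $\int \frac{\vartheta_y}{J}(\varrho_0^{-\gamma}G_y - \gamma\varrho_0^{-\gamma-1}\varrho_0'G)\,dy$; both pieces are bounded by Cauchy--Schwarz against $\|\varrho_0^{(1-\gamma)/2}\vartheta_y\|_2$ using $|\varrho_0'|\leq K_1\varrho_0^{3/2}$ and the $J$-bounds, so that a small multiple of $\|\varrho_0^{-(1+\gamma)/2}G_y\|_2^2$ can be absorbed and the remaining factors reduce to $\|\varrho_0^{(1-\gamma)/2}\vartheta_y\|_2^2 + \|\varrho_0^{-\gamma/2}G\|_2^2$. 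The remaining source $-\gamma\int\varrho_0^{-\gamma}v_yG^2dy$ together with the commutator $\tfrac12\int v_y\varrho_0^{-\gamma}G^2dy$ is the main obstacle.

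To handle that term, I would write
$$
\Bigl|\int_\mathbb{R} v_y \varrho_0^{-\gamma}G^2 dy\Bigr| \leq \|v_y\|_2 \|\varrho_0^{-\gamma/2}G\|_\infty \|\varrho_0^{-\gamma/2}G\|_2,
$$
and then invoke Lemma \ref{lemwineq} with $\sigma = -\gamma/2$, $f = G$, $q = \infty$ to get
$$
\|\varrho_0^{-\gamma/2}G\|_\infty \leq C\bigl(\|\varrho_0^{-\gamma/2}G\|_2 + \|\varrho_0^{-\gamma/2}G\|_2^{1/2}\|\varrho_0^{-(1+\gamma)/2}G_y\|_2^{1/2}\bigr).
$$
After applying Young's inequality, the $G_y$ contribution is absorbed into the good term at the cost of a factor $(1+\|v_y\|_2^4)\|\varrho_0^{-\gamma/2}G\|_2^2$ on the right.

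Combining everything yields a differential inequality of the form
$$
\frac{d}{dt}\|\sqrt{J}\,\varrho_0^{-\gamma/2}G\|_2^2 + c\|\varrho_0^{-(1+\gamma)/2}G_y\|_2^2 \leq C(1+\|v_y\|_2^4)\|\varrho_0^{-\gamma/2}G\|_2^2 + C\|\varrho_0^{(1-\gamma)/2}\vartheta_y\|_2^2.
$$
Gronwall's inequality then closes the estimate, the last source being controlled by Proposition \ref{PropWestvtheta} in terms of $\|\varrho_0^{(1-\gamma)/2}v_0\|_2^2 + \|\varrho_0^{1-\gamma/2}\vartheta_0\|_2^2$ times $e^{C\int_0^T\|v_y\|_2^4 dt}$. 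The stated exponent $e^{\int_0^T\|v_y\|_2^2 dt}$ in the proposition should ultimately stand for an integrable-in-$t$ coefficient after absorbing the quartic contribution into Proposition \ref{PropWestvtheta}'s output. The rigorous implementation uses the cut-off/approximation scheme indicated in the paragraph before Proposition \ref{PropWestvtheta}, testing against $\tfrac{\varrho_0}{\varrho_\delta^{\gamma+1}}JG\varphi_r^2$ and passing $r\uparrow\infty$, $\delta\downarrow 0$ as in Proposition \ref{PropEstL2S}, and the main technical hurdle is precisely this Lemma \ref{lemwineq}-based absorption of the $v_y G^2$ term.
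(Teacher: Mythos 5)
Your proposal follows essentially the same route as the paper: test \eqref{EqG} against $J\varrho_0^{-\gamma}G$, use $J_t=v_y$ for the time derivative, integrate by parts on the viscous term and on the $\vartheta_y$-source, control the commutator terms via $|\varrho_0'|\leq K_1\varrho_0^{3/2}$ and Cauchy--Schwarz, absorb the key $\int v_y\varrho_0^{-\gamma}G^2\,dy$ term via the weighted interpolation lemma, and close with Gronwall plus Proposition \ref{PropWestvtheta}. The only cosmetic difference is that you invoke Lemma \ref{lemwineq} with $q=\infty$ while the paper uses $q=4$ (writing $\int|v_y|\varrho_0^{-\gamma}G^2\leq\|v_y\|_2\|\varrho_0^{-\gamma/2}G\|_4^2$); both choices yield identical leading behaviour.

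One small arithmetic point: after absorbing the $G_y$-factor by Young's inequality, the conjugate exponent of $4$ is $4/3$, so the surviving coefficient on $\|\varrho_0^{-\gamma/2}G\|_2^2$ is $C(\|v_y\|_2+\|v_y\|_2^{4/3})\leq C(1+\|v_y\|_2^2)$, not $(1+\|v_y\|_2^4)$ as you wrote. This does not change the conclusion, since $\|v_y\|_2^{4/3}\leq 1+\|v_y\|_2^4$ and all the relevant time integrals of powers of $\|v_y\|_2$ are finite by the earlier $H^1$ estimates, so your weaker Gronwall factor still gives a finite bound; but the tighter $\|v_y\|_2^{4/3}$ is what produces the exponent $\int_0^T\|v_y\|_2^2\,dt$ displayed in the statement. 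Everything else — the identification of the commutators, the handling of the $\vartheta_y$ source, and the appeal to Proposition \ref{PropWestvtheta} to control $\int_0^T\|\varrho_0^{(1-\gamma)/2}\vartheta_y\|_2^2\,dt$ — matches the paper's proof.
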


\begin{proof}
Taking the inner product of (\ref{EqG}) with $\frac{JG}{\varrho_0^{\gamma}}$ and using (\ref{HSLOW}), one deduces
\begin{eqnarray*}
  &&\frac12\frac{d}{dt}\int_\mathbb R\frac{JG^2}{\varrho_0^{\gamma}} dy+\mu\|\varrho_0^{-\frac{\gamma+1}{2}}G_y\|_2^2 \nonumber\\
  &\leq&C\int_\mathbb R\left[|\vartheta_y|\left(\varrho_0^{-\gamma}|G_y|+\varrho_0^{\frac12-\gamma}|G|
  \right)
  +|v_y|\varrho_0^{-\gamma}G^2+\varrho_0^{-\frac12-\gamma}|G||G_y|\right]dy\\
  &\leq&\frac \mu8\|\varrho_0^{-\frac{\gamma+1}{2}}G_y\|_2^2+C\left[\|\varrho_0^{\frac{1-\gamma}{2}}\vartheta_y\|_2^2
  +\int_\mathbb R(1+|v_y|)\varrho_0^{-\gamma}G^2dy\right].
\end{eqnarray*}
Similar to (\ref{Westvtheta1-1}), one can get
\begin{eqnarray*}
  \int_\mathbb R|v_y|\varrho_0^{-\gamma}G^2dyd\tau
  &\leq&\eta \|\varrho_0^{-\frac{\gamma+1}{2}}G_y\|_2^2+C_\eta
  \left(\|v_y\|_2+\|v_y\|_2^{\frac43}\right)\|\varrho_0^{-\frac\gamma2}G\|_2^2\nonumber\\
  &\leq&\eta \|\varrho_0^{-\frac{\gamma+1}{2}}G_y\|_2^2+C_\eta
  \left(1+\|v_y\|_2^2\right)\|\varrho_0^{-\frac\gamma2}G\|_2^2.
\end{eqnarray*}
Combining the two inequalities above and choosing $\eta$ sufficiently small yield
\begin{equation*}
  \frac{d}{dt}\int_\mathbb R\frac{JG^2}{\varrho_0^{\gamma}} dy+\mu\|\varrho_0^{-\frac{\gamma+1}{2}}G_y\|_2^2 \leq C\left[\|\varrho_0^{\frac{1-\gamma}{2}}\vartheta_y\|_2^2
  +\left(1+\|v_y\|_2^2\right)\|\varrho_0^{-\frac\gamma2}G\|_2^2\right],
\end{equation*}
which leads to the conclusion by the Gronwall inequality and Proposition \ref{PropWestvtheta}.
\end{proof}

\subsection{Higher singular weighted estimates on $\vartheta$}
\label{subsechigher}
In this subsection, we derive some estimates of $\vartheta$ with weights which are more singular
than those in Section \ref{subsecsinvt}.

Denote
\begin{equation}
  \label{Thetaell}
  \vartheta_\ell:=\vartheta-\ell\varrho_0^{\gamma-1}e^{\overline M_Tt},\quad \ell\geq\bar\ell_0,
\end{equation}
where
\begin{equation}
  \label{UMT}
  \overline M_T:=\frac{\kappa(\gamma-1)}{c_v\underline J_T}(|\gamma-2|K_1^2+K_2).
\end{equation}
Then,
\begin{equation}
\label{EqThetaell}
c_v\varrho_0\partial_t\vartheta_\ell-\kappa\partial_y\left(\frac{\partial_y\vartheta_\ell}{J}\right) =v_yG-\ell \kappa (\gamma-1)e^{\overline M_Tt}\varrho_0^{\gamma-2}\varrho_0'J^{-2}J_y
+N_\ell,
\end{equation}
where $N_\ell:=\ell e^{\overline M_Tt}(\frac{\kappa}{J}(\varrho_0^{\gamma-1})''-c_v\overline M_T\varrho_0^\gamma).$ Note
that $N_\ell\leq0$.
Indeed, since $\tfrac{\varrho_0''}{\varrho_0^2}
\leq K_2$ and $\Big|\tfrac{\varrho_0'}{\varrho_0^{3/2}}\Big|^2\leq K_1^2$, it follows from (\ref{UMT}) and direct calculations that
\begin{eqnarray*}
 N_\ell
 &=&\ell\kappa(\gamma-1) e^{\overline M_Tt}\left[\frac{1}{J}\left( \frac{\varrho_0''}{\varrho_0^2}
 +(\gamma-2)\left|\frac{\varrho_0'}{\varrho_0^{\frac32}}\right|^2\right)\varrho_0^\gamma-\frac{1}{\underline J_T}(|\gamma-2|K_1^2+K_2)\varrho_0^\gamma\right]\\
 &\leq&\ell\kappa(\gamma-1) e^{\overline M_Tt}\left[ \frac{1}{J}\left( K_2
 +|\gamma-2|K_1^2\right)\varrho_0^\gamma-\frac{1}{\underline J_T}(|\gamma-2|K_1^2+K_2)\varrho_0^\gamma\right]\\
 &=&\ell e^{\overline M_Tt}\kappa(\gamma-1) \left( K_2
 +|\gamma-2|K_1^2\right)\varrho_0^\gamma\frac{\underline J_T-J}{J\underline J_T}\leq0.
\end{eqnarray*}

The main singularly weighted estimates on $\vartheta_\ell$ are stated as follows:

\begin{proposition}
  \label{PropWL2EstThetaell}
There exists a positive constant $C$ depending only on $c_v,\kappa,\gamma,\bar\varrho,K_1,K_2,T$, $\underline J_T,$ and $\bar J_T$, such that, for any $\ell\geq\bar\ell_0$,
\begin{eqnarray*}
&&\sup_{0\leq t\leq T}\big\|\varrho_0^{1-\gamma}(\vartheta_\ell)_+\big\|_2^2
+\int_0^T\big\|\varrho_0^{\frac12-\gamma}
\partial_y(\vartheta_\ell)_+\big\|_2^2dt\\
&\leq&C\int_0^T\int_\mathbb R  \big(|\varrho_0^{-\frac\gamma2}G|^2+|\varrho_0^{1-
\frac\gamma2}\vartheta|^2+\ell|\varrho_0^{-\frac12}J_y| \big)  \varrho_0^{1-\gamma}(\vartheta_\ell)_+ dydt
\end{eqnarray*}
and
$$
 \sup_{0\leq t\leq T}\big\|\varrho_0^{1-\gamma}(\vartheta_\ell)_+\big\|_2^2
+\int_0^T\big\|\varrho_0^{\frac12-\gamma}
\partial_y(\vartheta_\ell)_+\big\|_2^2dt
 \leq C(\ell^2\mathcal Z_J+\mathcal Z_\vartheta^2+\mathcal Z_G^2).
$$
\end{proposition}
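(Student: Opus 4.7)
The plan is to follow the hint in the introduction and perform a singularly weighted energy estimate on equation (\ref{EqThetaell}) using the test function $\varrho_0^{1-2\gamma}(\vartheta_\ell)_+$, regularized by a spatial cut-off $\varphi_r^2$ and a replacement $\varrho_0\mapsto\varrho_0+\delta$ exactly as in the proof of Proposition \ref{PropEstL2S}. The time-derivative term produces $\tfrac{c_v}{2}\tfrac{d}{dt}\|\varrho_0^{1-\gamma}(\vartheta_\ell)_+\|_2^2$. Integration by parts in the diffusive term yields the coercive piece $\kappa\|\varrho_0^{1/2-\gamma}\partial_y(\vartheta_\ell)_+/\sqrt{J}\|_2^2$ together with a cross term $\kappa(2\gamma-1)\int \varrho_0'\partial_y(\vartheta_\ell)_+(\vartheta_\ell)_+/(J\varrho_0^{2\gamma})\,dy$. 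Cauchy's inequality combined with (\ref{HSLOW}), which gives $|\varrho_0'|^2/\varrho_0^{2\gamma+1}\leq K_1^2\varrho_0^{2-2\gamma}$, absorbs half of the coercive contribution and leaves the lower order $C\|\varrho_0^{1-\gamma}(\vartheta_\ell)_+\|_2^2$.

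The three source terms on the right of (\ref{EqThetaell}) are treated as follows. Writing $v_y=(JG+R\varrho_0\vartheta)/\mu$, so that $v_yG=(J/\mu)G^2+(R/\mu)\varrho_0\vartheta G$, and pairing with $\varrho_0^{1-2\gamma}(\vartheta_\ell)_+$, a further Cauchy--Schwarz on the cross product reproduces the integrand $(|\varrho_0^{-\gamma/2}G|^2+|\varrho_0^{1-\gamma/2}\vartheta|^2)\varrho_0^{1-\gamma}(\vartheta_\ell)_+$. The $J_y$ source contributes $-\ell\kappa(\gamma-1)e^{\overline{M}_T t}\int\varrho_0^{\gamma-2}\varrho_0'J^{-2}J_y\cdot\varrho_0^{1-2\gamma}(\vartheta_\ell)_+\,dy$; using (\ref{HSLOW}) this collapses to $C\ell\int|\varrho_0^{-1/2}J_y|\varrho_0^{1-\gamma}(\vartheta_\ell)_+\,dy$, exactly matching the third term in the claim. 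The remainder $N_\ell$ was verified to be nonpositive in the excerpt, and its pairing with $\varrho_0^{1-2\gamma}(\vartheta_\ell)_+\geq 0$ is discarded. Because $\vartheta_0=\tfrac{A}{R}e^{s_0/c_v}\varrho_0^{\gamma-1}\leq\bar\ell_0\varrho_0^{\gamma-1}\leq\ell\varrho_0^{\gamma-1}$ whenever $\ell\geq\bar\ell_0$, we have $(\vartheta_\ell)_+|_{t=0}\equiv 0$; Gronwall then yields the first stated bound from the differential inequality (the exponential factor is absorbed into $C$).

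For the second inequality we estimate each of the three integrals on the right of the first. The $J_y$ contribution is handled by Cauchy--Schwarz in $y$ and then in $t$: $\ell\int_0^T\|\varrho_0^{-1/2}J_y\|_2\|\varrho_0^{1-\gamma}(\vartheta_\ell)_+\|_2\,dt\leq\tfrac14\sup_t\|\varrho_0^{1-\gamma}(\vartheta_\ell)_+\|_2^2+C\ell^2\mathcal Z_J$, and the first piece is absorbed on the left. For the $G$ and $\vartheta$ contributions, one writes $\int|F|^2\varrho_0^{1-\gamma}(\vartheta_\ell)_+\,dy\leq\|F\|_4^2\|\varrho_0^{1-\gamma}(\vartheta_\ell)_+\|_2$ for $F=\varrho_0^{-\gamma/2}G$ and $F=\varrho_0^{1-\gamma/2}\vartheta$, applies Lemma \ref{lemwineq} to interpolate the $L^4$ norm between the $L^2$ norm (controlled by $\mathcal Z_G^{1/2}$ or $\mathcal Z_\vartheta^{1/2}$) and the weighted derivative norms from Propositions \ref{PropWestvtheta}--\ref{PropWestG}, and integrates in $t$. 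A Hölder inequality in $t$ together with absorption of $\sup_t\|\varrho_0^{1-\gamma}(\vartheta_\ell)_+\|_2$ then produces the $\mathcal Z_G^2+\mathcal Z_\vartheta^2$ terms, completing the second bound.

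The main obstacle will be the rigorous justification of the highly singular weight $\varrho_0^{1-2\gamma}$ in the test function. This requires setting up the $\delta$-regularization and compactly supported cut-off so that every boundary contribution arising from $\varphi_r'$ vanishes as $r\to\infty$ and $\delta\to 0$; each such error term must be shown to be dominated by the a priori integrability in Theorem \ref{ThmGloUni-v} together with the weighted controls of Propositions \ref{PropWestvtheta}--\ref{PropWestG}. Once this limiting procedure is executed (parallel to Proposition \ref{PropEstL2S}), the remainder of the argument is bookkeeping on the weight exponents, which are arranged precisely so that the coercive diffusive term controls the weighted $L^\infty_tL^2_y$ and $L^2_tL^2_y$ norms of $(\vartheta_\ell)_+$ asserted in the statement.
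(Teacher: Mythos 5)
Your proposal is correct and follows the same approach as the paper: testing (\ref{EqThetaell}) with $\varrho_0^{1-2\gamma}(\vartheta_\ell)_+$, discarding the sign-definite remainder $N_\ell$, absorbing the cross term from the singular weight via (\ref{HSLOW}), using $v_y=(JG+R\varrho_0\vartheta)/\mu$ and (\ref{HSLOW}) to reduce the sources to the three claimed integrands, noting $(\vartheta_\ell)_+|_{t=0}\equiv 0$ for $\ell\ge\bar\ell_0$, and applying Gronwall; the second bound then follows by Cauchy/Young and Lemma \ref{lemwineq}, exactly as in the text. Your only deviations are cosmetic bookkeeping choices in the Young/Hölder splittings for the second inequality, which yield the same result.
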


\begin{proof}
Testing (\ref{EqThetaell}) with $\varrho_0^{1-2\gamma}(\vartheta_\ell)_+$ and recalling $N_\ell\leq0$, one obtains
\begin{eqnarray}
&&\frac{c_v}{2}\frac{d}{dt}\|\varrho_0^{1-\gamma}
(\vartheta_\ell)_+\|_2^2
+\kappa\int_\mathbb R  \frac{\partial_y\vartheta_\ell}{J}  \partial_y\left(\varrho_0^{1-2\gamma}(\vartheta_\ell)_+\right)dy \nonumber\\
&\leq& \int_\mathbb R \left(v_yG-\ell \kappa (\gamma-1)e^{\overline M_Tt}\varrho_0^{\gamma-2}\varrho_0'J^{-2}J_y\right)\varrho_0^{1-2\gamma}(\vartheta_\ell)_+dy=:I. \label{WestS1}
\end{eqnarray}
Similar to (\ref{Westvdiff}), one can get by using (\ref{HSLOW}) that
\begin{eqnarray}
\int_\mathbb R  \frac{\partial_y\vartheta_\ell}{J}  \partial_y\left(\frac{(\vartheta_\ell)_+}{\varrho_0^{2\gamma-1}}\right)dy
  &\geq&\frac34\int_\mathbb R\frac{|\partial_y(\vartheta_\ell)_+|^2}{J\varrho_0^{2\gamma-1}}dy
  -C\int_\mathbb R\frac{ |(\vartheta_\ell)_+|^2}{J\varrho_0^{2\gamma-1}}
  \left|\frac{\varrho_0'}{\varrho_0} \right|^2dy\nonumber\\
  &\geq&\frac{3}{4\bar J_T}\|\varrho_0^{\frac12-\gamma}\partial_y(\vartheta_\ell)_+\|_2^2-C\|\varrho_0^{1-\gamma}
(\vartheta_\ell)_+\|_2^2.
  \label{WestS1-2}
\end{eqnarray}
Due to (\ref{HSLOW}) and $|v_y|\leq C(|G|+\varrho_0\vartheta)$,
\begin{eqnarray}
  I&\leq&C\int_\mathbb R\left[ (|G|+\varrho_0\vartheta)|G|+\ell\varrho_0^{\gamma-\frac12}|J_y|\right]
  \varrho_0^{1-2\gamma}(\vartheta_\ell)_+ dy \nonumber\\
  &\leq&C\int_\mathbb R
  (G^2+\varrho_0^2\vartheta^2+\ell\varrho_0^{\gamma-\frac12}|J_y|)
  \varrho_0^{1-2\gamma}(\vartheta_\ell)_+ dy.\label{WestS1-3}
\end{eqnarray}
Substituting (\ref{WestS1-2}) and (\ref{WestS1-3}) into (\ref{WestS1}) and noticing that
$(\vartheta_0-\ell\varrho_0^{\gamma-1})_+\equiv0$ for any $\ell\geq\bar\ell_0$, one obtains the first
conclusion by the Gronwall inequality.

By the Cauchy inequality, one can derive easily from the first conclusion that
\begin{eqnarray*}
&&\sup_{0\leq t\leq T}\big\|\varrho_0^{1-\gamma}(\vartheta_\ell)_+\big\|_2^2
+\int_0^T\big\|\varrho_0^{\frac12-\gamma}
\partial_y(\vartheta_\ell)_+\big\|_2^2dt\\
&\leq& C\ell^2 \mathcal Z_J(T)+
C\int_0^T \left(\left\|\varrho_0^{-\frac\gamma2}G\right\|_4^4 +\left\|\varrho_0^{1-
\frac\gamma2}\vartheta\right\|_4^4\right)dt,
\end{eqnarray*}
for any $\ell\geq\bar\ell_0$. Next, it follows from Lemma \ref{lemwineq} that
\begin{eqnarray*}
  \int_0^T\left\|\varrho_0^{-\frac\gamma2}G\right\|_4^4dt
  &\leq& C\int_0^T
  \left(\left\|\varrho_0^{-\frac\gamma2}G\right\|_2^4+\left\|\varrho_0^{-\frac\gamma2}G \right\|_2^3
  \left\|\varrho_0^{-\frac{\gamma+1}{2}}G_y\right\|_2\right)dt\\
  &\leq& C\left(\sup_{0\leq t\leq T} \|\varrho_0^{-\frac\gamma2}G\|_2^2
+\int_0^T \|\varrho_0^{-\frac{\gamma+1}{2}}G_y\|_2^2
dt\right)^2= C\mathcal Z_G^2(T).
\end{eqnarray*}
Similarly,
\begin{equation*}
\int_0^T\left\|\varrho_0^{1-
\frac\gamma2}\vartheta\right\|_4^4dt\leq C\left(\sup_{0\leq t\leq T}\|\varrho_0^{1-\frac\gamma2}\vartheta\|_2^2
+\int_0^T\|\varrho_0^{\frac{1-\gamma}{2}}\vartheta_y\|_2^2
dt\right)^2=C\mathcal Z_\vartheta^2(T).
\end{equation*}
Therefore, the second conclusion holds.
\end{proof}

\subsection{The De Giorgi iteration}
\label{subsecDGth}
In this subsection, we derive the estimates for $\vartheta_\ell$ by the De Giorgi iteration.

\begin{proposition}
\label{PropDGThetea}
Set
\begin{equation*}
  Q_\ell:=\sup_{0\leq t\leq T}\|\varrho_0^{1-\gamma}(\vartheta_\ell)_+\|_2^2 +\int_0^T\|\varrho_0^{\frac12-\gamma}\partial_y(\vartheta_\ell)_+\|_2^2dt.
\end{equation*}
Then, it holds that
\begin{equation*}
  Q_\ell\leq \frac{C(1+\ell)^2}{(\ell-m)^3} (\mathcal Z_J^\frac12(T)+\mathcal Z_\vartheta(T)+\mathcal Z_G(T))Q_m^2 \qquad\forall\ell>m\geq\bar\ell_0,
\end{equation*}
where $C$ is a positive constant depending only on $\kappa,\gamma,c_v,\bar\varrho,K_1,K_2,T,\underline J_T,$ and $\bar J_T$.
\end{proposition}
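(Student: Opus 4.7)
The plan is to combine the first (sharper) energy estimate in Proposition \ref{PropWL2EstThetaell} with a standard De Giorgi super-level comparison and the Gagliardo--Nirenberg interpolation in Lemma \ref{lemwineq}. The starting point is a pointwise identity on the super-level set $A_\ell:=\{\vartheta_\ell>0\}$: for $\ell>m\geq\bar\ell_0$,
$$(\vartheta_m)_+ \;=\; (\vartheta_\ell)_++(\ell-m)\varrho_0^{\gamma-1}e^{\overline M_T t}\;\geq\;(\ell-m)\varrho_0^{\gamma-1},$$
which yields $\chi_{A_\ell}\leq\big[(\vartheta_m)_+/((\ell-m)\varrho_0^{\gamma-1})\big]^3$. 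Together with $(\vartheta_\ell)_+\leq(\vartheta_m)_+$ this gives the key pointwise bound
$$\varrho_0^{1-\gamma}(\vartheta_\ell)_+\chi_{A_\ell}\;\leq\;\frac{\big(\varrho_0^{1-\gamma}(\vartheta_m)_+\big)^{4}}{(\ell-m)^3}.$$
The exponent $3$ on $\chi_{A_\ell}$ is tuned so that the resulting fourth power of $\varrho_0^{1-\gamma}(\vartheta_m)_+$ can be controlled exactly by $Q_m^2$.

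Substituting this bound into the first form of Proposition \ref{PropWL2EstThetaell} leaves three integrals to estimate. Writing $a(t):=\|\varrho_0^{1-\gamma}(\vartheta_m)_+\|_2^2$ and $b(t):=\|\varrho_0^{1/2-\gamma}\partial_y(\vartheta_m)_+\|_2^2$, so that $\sup_{[0,T]}a+\int_0^T b\,dt\leq Q_m$, I would treat the $|\varrho_0^{-\gamma/2}G|^2$ term by the Hölder split $\int|\varrho_0^{-\gamma/2}G|^2 h\,dy\leq \|h\|_\infty\|\varrho_0^{-\gamma/2}G\|_2^2$ and then Lemma \ref{lemwineq} with $q=\infty$ to get $\|\varrho_0^{1-\gamma}(\vartheta_m)_+\|_\infty^4\leq C(a^2+ab)$, whose time integral is $\leq C(T+1)Q_m^2$. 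Coupled with $\sup_t\|\varrho_0^{-\gamma/2}G\|_2^2\leq\mathcal Z_G(T)$, this yields a contribution $\leq C\mathcal Z_G(T)Q_m^2/(\ell-m)^3$. The $|\varrho_0^{1-\gamma/2}\vartheta|^2$ term is handled identically using $\sup_t\|\varrho_0^{1-\gamma/2}\vartheta\|_2^2\leq\mathcal Z_\vartheta(T)$.

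For the third integral $\ell\int|\varrho_0^{-1/2}J_y|(\cdots)$, I would use Cauchy--Schwarz in $y$ and $\sup_t\|\varrho_0^{-1/2}J_y\|_2\leq\mathcal Z_J^{1/2}(T)$. The remaining time integral is $\int_0^T\|\varrho_0^{1-\gamma}(\vartheta_m)_+\|_8^4\,dt$, which by Lemma \ref{lemwineq} with $q=8$ is bounded by $\int_0^T C(a^2+a^{5/4}b^{3/4})\,dt\leq C(T+1)Q_m^2$ (using $\int_0^T b^{3/4}dt\leq T^{1/4}(\int_0^T b\,dt)^{3/4}$). The contribution is thus $\leq C\ell\,\mathcal Z_J^{1/2}(T)Q_m^2/(\ell-m)^3$. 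Summing the three contributions and using $\ell\leq(1+\ell)^2$ yields the claimed inequality.

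The main anticipated obstacle is the weight bookkeeping: one must keep track of the exact power of $\varrho_0$ through each Hölder split and each Gagliardo--Nirenberg interpolation so that the interpolated norms are precisely the $\mathcal Z$'s and quantities controlled by $Q_m$, rather than something stronger for which no estimate is available. The choice of super-level exponent $3$ is essentially forced: one wants to extract $Q_m^\sigma$ with $\sigma=2>1$ so Lemma \ref{lemiteration} applies, while keeping the exponent of $b$ inside the time integrals at most $1$. Lemma \ref{lemwineq} at $q=8$ produces $b^{3/4}$, which is the critical case corresponding to $p=3$; a larger power of $\chi_{A_\ell}$ would force $b^{p}$ with $p>1$, not integrable from the energy control alone, while a smaller one yields $\sigma\leq 3/2$ with a worse $(\ell-m)^{-\beta}$.
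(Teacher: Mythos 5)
Your proof is correct and follows essentially the same route as the paper: the super-level pointwise comparison $\chi_{\{\vartheta_\ell>0\}}\leq\big[\varrho_0^{1-\gamma}(\vartheta_m)_+/(\ell-m)\big]^3$, substitution into the first estimate of Proposition \ref{PropWL2EstThetaell}, and control of the three resulting integrals via Lemma \ref{lemwineq}. The only deviation is cosmetic: for the $G$ and $\vartheta$ contributions you use an $L^\infty\times L^1$ H\"older split in $y$ followed by pulling out $\sup_t\|\varrho_0^{-\gamma/2}G\|_2^2\leq\mathcal Z_G(T)$ in $t$, whereas the paper uses an $L^3\times L^{3/2}$ split in space--time reducing to $L^6$ norms of $\varrho_0^{-\gamma/2}G$ and $\varrho_0^{1-\gamma}(\vartheta_m)_+$; both produce the bound $C\mathcal Z_G(T)Q_m^2/(\ell-m)^3$, and the $J_y$ term is handled identically.
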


\begin{proof}
By Proposition \ref{PropWL2EstThetaell}, one has that, for any $\ell\geq\bar\ell_0$,
\begin{eqnarray}
Q_\ell&\leq&C\int_0^T\int_\mathbb R  \left(\left|\varrho_0^{-\frac\gamma2}G\right|^2+\left|\varrho_0^{1-
\frac\gamma2}\vartheta\right|^2+\ell
\left|\varrho_0^{-\frac12}J_y\right| \right)
  \varrho_0^{1-\gamma}(\vartheta_\ell)_+ dydt. \label{DGTHTA1}
\end{eqnarray}
For any $(y,t)\in\left\{(y,t)\big|\vartheta_\ell>0\right\}$ and $m<\ell$, it is clear that
\begin{eqnarray*}
  (\vartheta_m)_+(y,t)
  &\geq&(\ell-m)\varrho_0^{\gamma-1}(y)e^{\overline M_Tt}\geq (\ell-m)\varrho_0^{\gamma-1}(y) ,
\end{eqnarray*}
and, thus,
\begin{equation}
\label{ml}
  1\leq\frac{\varrho_0^{1-\gamma}}{\ell-m}(\vartheta_m)_+, \qquad\mbox{on } \left\{(y,t)\big|\vartheta_\ell>0\right\} \quad\forall m<\ell.
\end{equation}

Using (\ref{ml}) and noticing that $(\vartheta_\ell)_+\leq(\vartheta_m)_+$, for $m<\ell$, one can get
\begin{eqnarray}
&&\int_0^T\int_\mathbb R   \left|\varrho_0^{-\frac\gamma2}G\right|^2
\varrho_0^{1-\gamma}(\vartheta_\ell)_+ dydt\nonumber\\
&\leq &\int_0^T\int_\mathbb R   \left|\varrho_0^{-\frac\gamma2}G\right|^2
\varrho_0^{1-\gamma}(\vartheta_m)_+
\left|\frac{\varrho_0^{1-\gamma}(\vartheta_m)_+}{\ell-m}\right|^3dydt\nonumber\\
&\leq& \frac{1}{(\ell-m)^3}\left(\int_0^T\left\|\varrho_0^{-\frac\gamma2}G\right\|_6^6dt\right)^{\frac13}
\left(\int_0^T\left\|\varrho_0^{1-\gamma}(\vartheta_m)_+\right\|_6^6dt\right)^{\frac23}.
\label{DGTHTA1-1'}
\end{eqnarray}
Lemma \ref{lemwineq} implies that
\begin{equation}
  \int_0^T\left\|\varrho_0^{-\frac\gamma2}G\right\|_6^6dt
  \leq C\int_0^T\left(\left\|\varrho_0^{-\frac\gamma2}G\right\|_2^6+
  \left\|\varrho_0^{-\frac\gamma2}G\right\|_2^4\left\|\varrho_0^{-\frac{\gamma+1}{2}}G_y \right\|_2^2\right)dt\leq C\mathcal Z_G^3,\label{DGTHTA1-1.1}\\
\end{equation}
and, similarly,
\begin{eqnarray}
   \int_0^T\left\|\varrho_0^{1-\gamma}(\vartheta_m)_+\right\|_6^6dt
    \leq  CQ_m^3. \label{DGTHTA1-1.2}
\end{eqnarray}
Substituting
(\ref{DGTHTA1-1.1})--(\ref{DGTHTA1-1.2}) into (\ref{DGTHTA1-1'}) yields
\begin{eqnarray}
  &&\int_0^T\int_\mathbb R   \left|\varrho_0^{-\frac\gamma2}G\right|^2
\varrho_0^{1-\gamma}(\vartheta_\ell)_+ dydt\leq \frac{C\mathcal Z_G Q_m^2}{(\ell-m)^3},\quad\ell>m\geq\bar\ell_0.\label{DGTHTA1-1}
\end{eqnarray}
Similarly, one can show that
\begin{eqnarray}
  &&\int_0^T\int_\mathbb R   \left|\varrho_0^{1-\frac\gamma2}\vartheta\right|^2
\varrho_0^{1-\gamma}(\vartheta_\ell)_+ dydt\leq  \frac{C\mathcal Z_\vartheta Q_m^2}{(\ell-m)^3},\quad\ell>m\geq\bar\ell_0.\label{DGTHTA1-2}
\end{eqnarray}

Next, it follows from (\ref{ml}) and the fact that $(\vartheta_\ell)_+\leq(\vartheta_m)_+$, for $\ell>m$,
that
\begin{eqnarray}
  \int_0^T\int_\mathbb R\left|\frac{J_y}{\sqrt{\varrho_0}}\right|\varrho_0^{1-\gamma}
  (\vartheta_\ell)_+dydt
  &\leq&\int_0^T\int_\mathbb R\left|\frac{J_y}{\sqrt{\varrho_0}}\right|\varrho_0^{1-\gamma}
  (\vartheta_m)_+\left|\frac{\varrho_0^{1-\gamma}(\vartheta_m)_+}{\ell-m}
  \right|^3dydt \nonumber\\
  &\leq&\frac{1}{(\ell-m)^3}\sup_{0\leq t\leq T}\left\|\frac{J_y}{\sqrt{\varrho_0}}\right\|_2
  \int_0^T \|\varrho_0^{1-\gamma}(\vartheta_m)_+\|_8^4dt\nonumber\\
  &\leq&\frac{\mathcal Z_J^{\frac12}}{(\ell-m)^3}
  \int_0^T \|\varrho_0^{1-\gamma}(\vartheta_m)_+\|_8^4dt.\label{DGTHTA1-3'}
\end{eqnarray}
By Lemma \ref{lemwineq}, it holds that
\begin{eqnarray}
  &&\int_0^T\|\varrho_0^{1-\gamma}(\vartheta_m)_+\|_8^4dt\nonumber\\
  &\leq&C\int_0^T\left(\|\varrho_0^{1-\gamma}(\vartheta_m)_+\|_2^4
  +\|\varrho_0^{1-\gamma}(\vartheta_m)_+\|_2^{\frac52}
  \|\varrho_0^{\frac12-\gamma}\partial_y(\vartheta_m)_+\|_2^{\frac32}\right)  dt\nonumber\\
  &\leq&C\left(\sup_{0\leq t\leq T}\|\varrho_0^{1-\gamma}(\vartheta_m)_+\|_2^2+
  \int_0^T \|\varrho_0^{\frac12-\gamma}\partial_y(\vartheta_m)_+\|_2^2dt\right)^2=CQ_m^2.
  \label{DGTHTA1-3.1}
\end{eqnarray}
Combining (\ref{DGTHTA1-3'}) with (\ref{DGTHTA1-3.1}) leads to
\begin{eqnarray}
   \int_0^T\int_\mathbb R\left|\frac{J_y}{\sqrt{\varrho_0}}\right|\varrho_0^{1-\gamma}
  (\vartheta_\ell)_+dydt
  &\leq&\frac{C\mathcal Z_J^{\frac12}Q_m^2}{(\ell-m)^3}.\label{DGTHTA1-3}
\end{eqnarray}

Substituting (\ref{DGTHTA1-1}), (\ref{DGTHTA1-2}), and (\ref{DGTHTA1-3}) into (\ref{DGTHTA1}) yields the conclusion.
\end{proof}

\subsection{Upper bound of the entropy}
\label{ssecSup}
We are now ready to establish the uniform upper bound for the entropy.
\begin{theorem}
  \label{ThmSuppBd}
Assume that (\ref{ass-v}), (\ref{finitemass}), (\ref{HSLOW}), (\ref{HSLOW2}), and (\ref{ass-singular}) hold, and the initial entropy is bounded from above. Then, the unique global solution obtained in Theorem \ref{ThmGloUni-v} satisfies
\begin{equation*}
 \sup_{(y,t)\in\mathbb R\times(0,T)} s\leq C\log(2+\bar\ell_0+\mathcal Z(T)),
\end{equation*}
for any positive time $T$, where
$\mathcal Z(T)=\mathcal Z_J^\frac12(T)+\mathcal Z_\vartheta(T)+\mathcal Z_G(T)$
and $C$ is a positive constant depending only on $c_v$, $\kappa$, $\gamma$, $\bar\varrho$, $K_1$, $K_2$, $T$, $\underline J_T$, and $\bar J_T$.
\end{theorem}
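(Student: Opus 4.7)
The plan is to translate the upper bound on $s$ into a pointwise bound of the form $\vartheta\le L\,\varrho_0^{\gamma-1}$ for some explicit $L$, and to obtain this bound via the De Giorgi iteration already set up in Proposition~\ref{PropDGThetea} combined with the algebraic Lemma~\ref{lemiteration}. By the entropy formula (\ref{Entropy}) and the uniform two-sided bounds $\underline J_T\le J\le\bar J_T$ coming from Theorem~\ref{ThmGloUni-v}, the task reduces to exhibiting some $L\ge\bar\ell_0$, depending only on the data and on $\mathcal Z(T)$, for which $(\vartheta_L)_+\equiv0$ a.e.\ on $\mathbb R\times(0,T)$, where $\vartheta_\ell$ is defined in (\ref{Thetaell}).

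First I would verify that the iteration starts from finite energy. The assumption that the initial entropy is bounded from above gives $\vartheta_0=\tfrac{A}{R}e^{s_0/c_v}\varrho_0^{\gamma-1}\le\bar\ell_0\,\varrho_0^{\gamma-1}$, so $(\vartheta_\ell)_+|_{t=0}\equiv0$ for every $\ell\ge\bar\ell_0$, and the second estimate in Proposition~\ref{PropWL2EstThetaell} yields
\begin{equation*}
Q_{\bar\ell_0}\le C\bigl(\bar\ell_0^{2}\mathcal Z_J(T)+\mathcal Z_\vartheta^{2}(T)+\mathcal Z_G^{2}(T)\bigr)\le C\bigl(1+\bar\ell_0+\mathcal Z(T)\bigr)^{4}.
\end{equation*}
Next, setting $f(\ell):=Q_\ell$, which is non-negative and non-increasing on $[\bar\ell_0,\infty)$, the recursion in Proposition~\ref{PropDGThetea} is exactly of the form required by Lemma~\ref{lemiteration} with $\alpha=2$, $\beta=3$, $\sigma=2$, $M_0=C\mathcal Z(T)$, and $m_0=\bar\ell_0$, and these satisfy $0\le\alpha<\beta$ and $\sigma>1$. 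Substituting those exponents into Lemma~\ref{lemiteration} gives $f(\bar\ell_0+d)=0$ with
\begin{equation*}
d\le 2\,Q_{\bar\ell_0}^{2}\bigl(\bar\ell_0+M_0+2\bigr)^{22}+2\le C\bigl(1+\bar\ell_0+\mathcal Z(T)\bigr)^{N}
\end{equation*}
for some absolute exponent $N$. Thus $(\vartheta_{\bar\ell_0+d})_+\equiv0$, i.e.\ $\vartheta\le(\bar\ell_0+d)\,\varrho_0^{\gamma-1}e^{\overline M_T T}$ a.e.\ on $\mathbb R\times(0,T)$. Plugging this into (\ref{Entropy}) and using $J\le\bar J_T$ yields
\begin{equation*}
s\le c_v\Big[\log\tfrac{R}{A}+(\gamma-1)\log\bar J_T+\overline M_T T+\log(\bar\ell_0+d)\Big],
\end{equation*}
and since $d$ is polynomial in $\bar\ell_0+\mathcal Z(T)$, one has $\log(\bar\ell_0+d)\le C\log\bigl(2+\bar\ell_0+\mathcal Z(T)\bigr)$, giving the asserted bound.

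The genuine difficulties in this argument do not lie in the iteration itself but in making the recursion of Proposition~\ref{PropDGThetea} available, and those have already been resolved earlier in the section. The essential ingredients are (a) the singularly weighted test function $\varrho_0^{1-2\gamma}(\vartheta_\ell)_+$ used in Proposition~\ref{PropWL2EstThetaell}, which balances the degeneracy of the diffusion at the far field, and (b) the exponential correction $e^{\overline M_T t}$ in the definition (\ref{Thetaell}), which makes the source $N_\ell$ in (\ref{EqThetaell}) non-positive so that it can be discarded rather than estimated. Once these inputs are granted, the present step amounts to converting a quadratic algebraic recursion into a vanishing statement through Lemma~\ref{lemiteration}, and the only bookkeeping is tracking the polynomial dependence of $d$ on $\bar\ell_0$ and $\mathcal Z(T)$ so that a final logarithm reproduces the stated estimate.
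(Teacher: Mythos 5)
Your argument is correct and follows the paper's proof essentially verbatim: you identify the recursion in Proposition~\ref{PropDGThetea} as fitting the hypotheses of Lemma~\ref{lemiteration} with $\alpha=2$, $\beta=3$, $\sigma=2$, $M_0=C\mathcal Z(T)$, $m_0=\bar\ell_0$, bound $Q_{\bar\ell_0}$ via the second estimate in Proposition~\ref{PropWL2EstThetaell}, deduce $(\vartheta_{\bar\ell_0+d})_+\equiv0$ with $d$ polynomial in $\bar\ell_0+\mathcal Z(T)$, and convert this through (\ref{Entropy}) and $J\le\bar J_T$ into the logarithmic bound on $s$. This is exactly the route taken in the paper, including the same intermediate constants, so there is nothing to add.
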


\begin{proof}
It follows from Proposition \ref{PropDGThetea} that
\begin{equation*}
  Q_\ell\leq \frac{C(1+\ell)^2}{(\ell-m)^3}\mathcal Z(T) Q_m^2 ,\qquad\forall\ell>m\geq\bar\ell_0.
\end{equation*}
One can check easily that $Q_\ell$ is non-increasing in $\ell$. Therefore, Lemma \ref{lemiteration} implies
$Q_{\bar\ell_0+d}=0$, with
$d=2+2\big(2+\bar\ell_0+C\mathcal Z(T)\big)^{22}Q_{\bar\ell_0}^2.$
Hence, $(\vartheta_{\bar\ell_0+d})_+\equiv0$, which gives
\begin{eqnarray*}
  \vartheta\leq\left(\bar\ell_0+d\right)\varrho_0^{\gamma-1}e^{\overline M_Tt}
  \leq\left(\bar\ell_0+d\right)\varrho_0^{\gamma-1}e^{\overline M_TT},
\end{eqnarray*}
and, consequently,
\begin{eqnarray}
  s&=&c_v\left(\log\frac{R}{A}+(\gamma-1)\log J+\log\vartheta-(\gamma-1)\log\varrho_0\right)\nonumber\\
  &\leq&c_v\left(\log\frac{R}{A}+(\gamma-1)\log\bar J_T+\log(\bar\ell_0+d)+\overline M_TT\right).
  \label{ups1}
\end{eqnarray}

Proposition \ref{PropWL2EstThetaell} shows that $Q_{\bar\ell_0}\leq C(1+\bar\ell_0^2)\mathcal Z^2(T)$, and, thus,
$d \leq C (2+\bar\ell_0+\mathcal Z(T))^{30}.$
This and (\ref{ups1}) give the desired conclusion.
\end{proof}

\section{Appendix}
In this appendix, we prove Lemma \ref{lemiteration}.

\begin{proof}[Proof of Lemma \ref{lemiteration}]
It follows from the assumption that
\begin{equation}\label{ite1}
f(\ell)\leq\frac{2^\alpha M_0\ell^\alpha}{(\ell-m)^\beta}f^\sigma(m),\quad\forall\ell>m\geq m_0+1.
\end{equation}
Let $d_0\geq1$ be a positive number to be determined later, and set
$$
\ell_k=m_0+1+\left(1-\frac{1}{2^k}\right) d_0, \quad k=0,1,2,\cdots.
$$
Then, choosing $\ell=\ell_{k+1}$ and $m=\ell_k$ in (\ref{ite1}), and noticing that $\ell_{k+1}\leq m_0+1+d_0$, one deduces that
\begin{eqnarray*}
  f(\ell_{k+1})&\leq&M_02^\alpha\ell_{k+1}^\alpha(\ell_{k+1}-\ell_k)^{-\beta}f^\sigma(\ell_k)\\
  &\leq&M_02^\alpha(m_0+1+d_0)^\alpha(2^{-(k+1)}d_0)^{-\beta}f^\sigma(\ell_k)\\
  &=&M_02^{k\beta+\alpha+\beta}\left(\frac{m_0+1}{d_0^{\beta/\alpha}}+\frac{1}
  {d_0^{\beta/\alpha-1}}\right)^\alpha f^\sigma(\ell_k),
\end{eqnarray*}
from which, recalling that $d_0\geq1$ and noticing that $\frac\beta\alpha>1$, one obtains
$$
 f(\ell_{k+1})
  \leq M^{k\beta+2\alpha+\beta+1}f^\sigma(\ell_k),\quad k=0,1,2,\cdots,
$$
with $M=M_0+m_0+2$, which can be written equivalently as
\begin{equation}\label{ite2}
  M^{a(k+1)+b}f(\ell_{k+1})\leq [M^{ak+b}f(\ell_k)]^\sigma,\quad k=0,1,2,\cdots,
\end{equation}
where $a=\frac{\beta}{\sigma-1}$ and $b=\frac{2\alpha+\beta+1}{\sigma-1}+\frac{\beta}{(\sigma-1)^2}.$
It follows from (\ref{ite2}) that
$$
M^{a(k+1)+b}f(\ell_{k+1})\leq (M^{a+b}f(\ell_1))^{\sigma^k},
$$
which implies, due to $M\geq2, a>0,$ and $b>0$, that
\begin{equation}
  \label{ite3}
  f(\ell_{k+1})\leq (M^{a+b}f(\ell_1))^{\sigma^k},\quad k=1,2,\cdots.
\end{equation}

Choosing $\ell=\ell_1$ and $m=\ell_0$ in (\ref{ite1}) leads to
\begin{eqnarray*}
  f(\ell_1)&\leq&2^\alpha M_0\ell_1^\alpha(\ell_1-\ell_0)^{-\beta} f^\sigma(\ell_0)
   \leq \frac{2^{\alpha+\beta}M_0}{d_0^{\beta-\alpha}}\left(m_0+2\right)^\alpha f^\sigma(m_0+1).
\end{eqnarray*}
It follows from this and the monotonicity of $f$ that
\begin{eqnarray*}
  f(\ell_1)&\leq&\frac{M^{2\alpha+\beta+1}}{d_0^{\beta-\alpha}}f^\sigma(m_0).
\end{eqnarray*}
Therefore,
\begin{eqnarray*}
  M^{a+b}f(\ell_1)=M^{\frac{2\alpha+2\beta+1}{\sigma-1}+\frac{\beta}{(\sigma-1)^2}}f(\ell_1)
  \leq M^{\frac{2\alpha+2\beta+1}{\sigma-1}+\frac{\beta}{(\sigma-1)^2}+2\alpha+\beta+1}
  \frac{f^\sigma(m_0)}{d_0^{\beta-\alpha}}\leq\frac12,
\end{eqnarray*}
provided that
$d_0=\left(2 f_0^\sigma M^{\frac{2\alpha+2\beta+1}{\sigma-1}+\frac{\beta}{(\sigma-1)^2}+2\alpha+\beta+1}
  \right)^{\frac{1}{\beta-\alpha}}+1$.

It follows from (\ref{ite3}) that
$$
f(m_0+1+d_0)\leq f(\ell_{k+1})\leq (M^{a+b}f(\ell_1))^{\sigma^k}
\leq\left(\frac12\right)^{\sigma^k},\quad k=0, 1,2,\cdots.
$$
Passing $k\rightarrow\infty$ in the above yields $f(m_0+1+d_0)=0$, so the conclusion follows.
\end{proof}

\section*{Acknowledgments}
J.L. was supported in part by the National Natural Science Foundation of China grants 11971009, 11871005, and 11771156, by the Natural Science Foundation of Guangdong Province
grant 2019A1515011621, and by the South China Normal University start-up grant 550-8S0315. Z.X. was supported in part by the
Zheng Ge Ru Foundation and by Hong Kong RGC Earmarked Research Grants
CUHK 14305315, CUHK 14302819, CUHK 14300917, and CUHK 14302917.

\end{document}